\documentclass[10pt, colorlinks]{article}
\usepackage{latexsym, amssymb, amsmath, amsthm, amscd}
\usepackage[all,cmtip]{xy}
\usepackage{amsmath}
\usepackage{amsthm}
\usepackage{amssymb}
\usepackage{amsfonts}
\usepackage{amsrefs}
\usepackage{color,authblk}
\usepackage{tikz}
\usepackage{amscd} 
\usepackage{comment}
\usepackage{mathrsfs}
\usepackage{comment}
\usepackage[all]{xy}
\usepackage{graphicx}
\usepackage{authblk}
\usepackage{hyperref}
\usepackage{bbm}
\usepackage{fullpage}
\usepackage[all,cmtip]{xy}
\usepackage{relsize}
\usepackage{amsmath,amscd}
\usepackage{tikz-cd}
\usepackage{tikz}
\usetikzlibrary{matrix}
\usepackage[mathcal]{euscript}
\usepackage{txfonts}
\usepackage{enumerate}

\usepackage{hyperref}

\numberwithin{equation}{section} \DeclareMathSizes{2}{10}{12}{13}
\makeatletter
\newcommand*{\doublerightarrow}[2]{\mathrel{
		\settowidth{\@tempdima}{$\scriptstyle#1$}
		\settowidth{\@tempdimb}{$\scriptstyle#2$}
		\ifdim\@tempdimb>\@tempdima \@tempdima=\@tempdimb\fi
		\mathop{\vcenter{
				\offinterlineskip\ialign{\hbox to\dimexpr\@tempdima+1em{##}\cr
					\rightarrowfill\cr\noalign{\kern.5ex}
					\rightarrowfill\cr}}}\limits^{\!#1}_{\!#2}}}
\newcommand*{\triplerightarrow}[1]{\mathrel{
		\settowidth{\@tempdima}{$\scriptstyle#1$}
		\mathop{\vcenter{
				\offinterlineskip\ialign{\hbox to\dimexpr\@tempdima+1em{##}\cr
					\rightarrowfill\cr\noalign{\kern.5ex}
					\rightarrowfill\cr\noalign{\kern.5ex}
					\rightarrowfill\cr}}}\limits^{\!#1}}}
\makeatother

\newtheorem{thm}{Proposition}[section]
\newtheorem{Thm}[thm]{Theorem}

\newtheorem{cor}[thm]{Corollary}
\newtheorem{eg}[thm]{Example}
\newtheorem{lem}[thm]{Lemma}
\newtheorem{defn}[thm]{Definition}

\title{Hochschild theory of multiplicative sequences of algebras and coalgebra measurings}

\author{Abhishek Banerjee \footnote{Department of Mathematics, Indian Institute of Science, Bangalore, India. Email: abhishekbanerjee1313@gmail.com} $\qquad\qquad$ Surjeet Kour \footnote{Department of Mathematics, Indian Institute of Technology, Delhi, India. Email: koursurjeet@gmail.com} 
 $\qquad\qquad$ Dipti Paik \footnote{Department of Mathematics, Indian Institute of Technology, Delhi, India. Email: paikdipti09@gmail.com} }

\date{}

\begin{document}

\maketitle 

\medskip

\begin{abstract}
We study coalgebra measurings between multiplicative sequences of algebras and the maps induced by them on Hochschild homology. The Hochschild theory of multiplicative sequences is introduced as a functor taking values in graded algebras in the symmetric monoidal category of chain complexes, constructed with the help of the shuffle product. We develop the universal measuring coalgebra, or Sweedler Hom for multiplicative sequences, as well as study several other Sweedler operations in this context. In particular, we obtain an enrichment of multiplicative sequences over cocommutative coalgebras. Using an appropriate theory of bimodules over multiplicative sequences, we  study maps induced by comodule measurings  on the Hochschild theory with coefficients, as well as the corresponding enriched categories. Finally, we consider measurings and generalized Sweedler operations between multiplicative sequences induced by comultiplicative sequences of coalgebras, and also the maps in Hochschild theory obtained from them. 
\end{abstract}

\medskip

{MSC (2020) Subject classification: 16T15, 16E40}

\medskip
{Keywords: coalgebra measurings, multiplicative sequences, Hochschild homology}

	\hypersetup{linktocpage}
	\section{Introduction}

Let $k$ be a field of characteristic zero. In this paper, all algebras and coalgebras are taken to be over $k$. The enrichment of algebras over coalgebras is a classical construction that goes back to Sweedler \cite{SW}. More precisely, if $A$, $A'$ are $k$-algebras, then a coalgebra measuring from $A$ to $A'$ consists of a coalgebra $C$ and a family $\Phi:C\longrightarrow Hom_k(A,A')$ of $k$-linear maps parametrized by $C$, which satisfies 
 \begin{equation}\label{eq1.1fh}
        \Phi(x)(a_1a_2)=\sum(\Phi(x_{(1)})(a_1))(\Phi(x_{(2)})(a_2))\qquad\Phi(x)(1_A)=\epsilon(x)1_{A'} \qquad x\in C, a_1,a_2\in A
    \end{equation} In \eqref{eq1.1fh}, the coproduct $\Delta$ on $C$ is written as $\Delta(x)=\sum x_{(1)}\otimes x_{(2)}$ for $x\in C$, and $\epsilon:C\longrightarrow k$ denotes the counit on
    $C$. If $x\in C$ is a grouplike element, i.e., $\Delta(x)=x\otimes x$ and $\epsilon(x)=1$, it follows that $\Phi(x):A\longrightarrow A'$ is an ordinary morphism of algebras. If $x\in C$ is a primitive element (provided the coalgebra $C$ is coagumented), then the condition in \eqref{eq1.1fh} resembles that of a derivation. The collection of coalgebra measurings from $A$ to $A'$ admits a universal object $Q(A,A')$. This is known as the universal measuring coalgebra, also called the Sweedler Hom from $A$ to $A'$. When $A'=k$, then $Q(A,k)$ recovers the notion of finite dual of an algebra, which is adjoint to the linear dual functor that takes coalgebras to algebras (see, for instance, Porst and Street \cite{PS}).  The Sweedler Hom also becomes the starting point for a family of constructions, known as the Sweedler operations (see Anel and Joyal \cite{AJ}) that relate the monoidal category of algebras to that of coalgebras. The theory of coalgebra measurings has also been developed in several other contexts, such as bialgebras (see Grunenfelder and Mastnak \cite{GM1}, \cite{GM2}) and entwining structures (see Brzezi\'{n}ski \cite{Brz}). 
For monoid objects in braided monoidal categories, they have been studied by Hyland, L\'{o}pez Franco and Vasilakopoulou \cite{MLV} and by Vasilakopoulou \cite{Vas}.   For more in the literature on coalgebra measurings, see, for instance \cite{Bat0}, \cite{MB}, \cite{MH},  \cite{Lauve}. 

\smallskip
Since coalgebra measurings can be understood as a generalization of morphisms of algebras, we have used them in previous work in \cite{AB}, \cite{AB2}, \cite{AB3} to induce maps between a number of homology and cohomology theories. The starting point was in \cite{AB}, where we began by showing that if $C$ is a cocommutative coalgebra and $\Phi:C\longrightarrow Hom_k(A,A')$ is a measuring, then we have induced maps 
\begin{equation}
C_*^\Phi(x): C_*(A)\longrightarrow C_*(A') \qquad (a_0\otimes a_1 \otimes ...\otimes a_n)\mapsto \sum (\Phi(x_{(1)})(a_0)\otimes \Phi(x_{(2)})(a_1)\otimes ...\otimes \Phi(x_{(n+1)})(a_n))
\end{equation}
between Hochschild complexes for each $x\in C$, where we know that $C_n(A):=A^{\otimes n+1}$ for each $n\geq 0$. This idea can be extended to several other situations, such as cyclic homology of algebras, the Chevalley-Eilenberg homology of Lie algebras, the homology of Leibniz algebras, or the dihedral homology of involutive algebras (see \cite{AB}, \cite{AB3}). In \cite{AB2}, we showed that coalgebra measurings induce morphisms between homology theories and cohomology theories of Hopf algebroids, and that these are compatible with Hopf-Galois maps. The maps induced by coalgebra measurings are also well behaved with respect to product and coproduct structures, such as the shuffle product on Hochschild homology (see \cite{AB}) or the coproduct on Lie algebra homology or Leibniz homology (see \cite{AB3}). Moreover, they are compatible (see \cite{AB3}) with ``comparison maps,'' such as those identifying the Hochschild (resp. cyclic) homology with the primitive part of the Leibniz homology (resp. Lie homology) of the matrix algebra, or the dihedral homology of an involutive algebra with the primitive part of the Lie algebra homology of symplectic matrices or skew-symmetric matrices. 

\smallskip
In this paper, we  study maps induced by coalgebra measurings on the Hochschild theory of multiplicative sequences of algebras. We recall that a multiplicative sequence  $({A}_\bullet,\tau)$  of algebras is a collection   ${A}_\bullet=\{A_n \mid n \ge 0\}$  algebras, with $A_0=k$, along with a collection of algebra homomorphisms
\begin{equation}\label{1.3eef}
  \tau=\{\tau_{m,n}: A_m \otimes A_n \longrightarrow A_{m+n}\}_{m,n\geq 0}
\end{equation}   satisfying certain associativity conditions. These were introduced by Davydov and Molev \cite{DAV} in order to provide a unifying framework for describing classical Schur-Weyl dualities in terms of properties of functors from certain monoidal categories. While an associative algebra is often understood as the endomorphism ring corresponding to a linear category with a single object, a multiplicative sequence of algebras is obtained from spaces of endomorphisms of tensor powers of an object in a strict monoidal category. For instance, the multiplicative sequence $\{\mbox{$k[S_n]$ $\vert$ $n\geq 0$}\}$ of symmetric group algebras equipped with algebra homomorphisms $k[S_m]\otimes k[S_n]\longrightarrow k[S_{m+n}]$ induced by the canonical inclusions $S_m\times S_n\hookrightarrow S_{m+n}$ is related to the free symmetric abelian monoidal $k$-linear category generated by one object, and Schur-Weyl dualities can be captured by properties of functors from this category to representation categories of general linear Lie algebras. There is also a quantized version of this developed in \cite{DAV}, which relates to multiplicative sequences of Hecke algebras. Similarly, there are multiplicative sequences associated to affine symmetric group algebras, degenerate affine Hecke algebras, braid group algebras and affine braid group algebras, each with its associated monoidal category (see \cite{DAV},  and also Davydov and Elbehiry \cite{DE}). In \cite{DAV}, the authors have also described the usual notions associated with algebras, such as generators and relations, or the free algebra associated to a vector space, in the setup of multiplicative sequences. Our objective is to develop Sweedler operations, coalgebra measurings and the enriched categories associated with them,   in the context of multiplicative sequences, as well the maps induced by them on homology, including with bimodule coefficients.  

\smallskip
The first step is to have a Hochschild theory for multiplicative sequences. If $A$ is an algebra, we note that the collection  $\{\mbox{$A_0:=k$, $A_i:=A$, $i\geq 1$}\}$ equipped with maps $\mu:A\otimes A\longrightarrow A$ given by the product $\mu$ on $A$ is not necessarily a multiplicative sequence in the sense of \eqref{1.3eef}. This is because the product $\mu:A\otimes A
\longrightarrow A$ is not in general an algebra homomorphism. However, this holds when $A$ is a commutative. We observe that this is also the property that allows one to use the shuffle product to induce an internal product structure on the Hochschild complex of a commutative algebra. If $A$, $B$ are $k$-algebras (not necessarily commutative), we recall that there is a shuffle product $sh:C_*(A)\otimes C_*(B)\longrightarrow C_*(A\otimes B)$ on Hochschild complexes (see, for instance, \cite[$\S$ 4.2.3]{LD}). In the case where $A=B$ and $A$ is commutative, this is composed with the map induced by the product $\mu:A\otimes A\longrightarrow A$ on $A$ to define an internal shuffle product (see \cite[$\S$ 4.2.6]{LD})
\begin{equation}
C_*(A)\otimes C_*(A) \xrightarrow{\qquad sh \qquad}C_*(A\otimes A)\xrightarrow{\qquad C_*(\mu)\qquad}C_*(A)
\end{equation} on the Hochschild complex of $A$. This suggests that the Hochschild theory of a multiplicative sequence $(A_\bullet,\tau)$ of algebras be set up in the following manner. For each $n\geq 0$, we have the Hochschild complex $C_*(A_n)$ corresponding to the algebra $A_n$. For $m,n\geq 0$, the shuffle product and the structure maps $\tau_{m,n}: A_m \otimes A_n \longrightarrow A_{m+n}$ of the multiplicative sequence $(A_\bullet,\tau)$ together induce a map of complexes
\begin{equation}\label{1.5}
\tau^{sh}_{m,n}:C_*(A_m)\otimes C_*(A_n) \xrightarrow{\qquad sh \qquad}C_*(A_m\otimes A_n)\xrightarrow{\qquad C_*(\tau_{m,n})\qquad}C_*(A_{m+n})
\end{equation} We let $\mathscr{M}ult_k$ denote the category of multiplicative sequences. Let $\mathcal C_k=\mathcal Ch(\mathcal Vect_k)$ be the monoidal category of chain complexes of $k$-vector spaces and let $\mathscr Gr(\mathcal C_k) $ be the category of non-negatively graded  objects in $\mathcal C_k$. In Section 2, we introduce the Hochschild theory on multiplicative sequences as a functor (see Definition \ref{DL2.2xr}) 
\begin{equation} Hoch:\mathscr{M}ult_k\longrightarrow \mathscr Gr\mathcal Alg(\mathcal C_k) \qquad ({A}_\bullet,\tau)\mapsto (\{C_*(A_n)\}_{n\geq0},\tau^{sh}=\{\tau^{sh}_{m,n}\}_{m,n\geq 0})
\end{equation}
taking values in the category $\mathscr Gr\mathcal Alg(\mathcal C_k) $ of non-negatively graded algebra objects in the monoidal category $\mathcal C_k$. We begin by studying the Lie derivative on this Hochschild theory in a manner similar to that on Hochschild complexes of algebras. A derivation on $(A_\bullet,\tau)$ (see Definition \ref{D2.Xr}) is a collection of derivations
$d=\{d_i:A_i\longrightarrow A_i\}_{i\geq 0}$ which further satisfies  $
            d_{m+n}\circ\tau_{m,n}=\tau_{m,n}\circ (d_{m}\otimes A_n+A_m\otimes d_n)$ for $m$, $n\geq 0$. Our first main result is that such a derivation $d=\{d_i\}_{i\geq 0}$ on a multiplicative sequence $(A_\bullet,\tau)$ induces a derivation $\mathscr{L}(d)$  on the graded algebra $Hoch(A_\bullet)\in   \mathscr Gr\mathcal Alg(\mathcal C_k)$.
            
            \smallskip
            Let $C$ be a cocommutative coalgebra. By a coalgebra measuring $(C,\Phi)$ between multiplicative sequences $(A_\bullet,\tau)$, $(A'_\bullet,\tau')\in \mathscr Mult_k$ (see Definition \ref{D3.r1}), we mean a collection $\Phi=\{\Phi_i: C\longrightarrow  {Hom}_k(A_i,A'_i)\}_{i\geq 0}$ of coalgebra measurings satisfying an additional compatibility condition between the coproduct $\Delta$ on $C$ and the structure maps $\tau=\{\tau_{m,n}\}_{m,n\geq 0}$. We show that convolution with the coalgebra $C$ determines a functor $[C,-]:\mathscr Mult_k\longrightarrow \mathscr Mult_k$  and that coalgebra measurings from $A_\bullet$ to $A'_\bullet$ parametrized by $C$ are in one to one correspondence with maps from 
            $A_\bullet$ to $[C,A']_\bullet$ in $\mathscr Mult_k$ (see Proposition \ref{prop3.3}). In \cite{AJ}, Anel and Joyal showed that the convolution with a coalgebra on the category of $k$-algebras has a left adjoint, which they called the Sweedler product. Using the coalgebra $C$, we construct a functor
           $
C\Box-:\mathscr Mult_k\longrightarrow \mathcal Comm^{\geq 0}_k$  taking values in the category $\mathcal Comm^{\geq 0}_k$ of towers of commutative $k$-algebras. We note that $\mathcal Comm^{\geq 0}_k$ embeds canonically as  a full subcategory of $\mathscr Mult_k$ (see Section 3). Thereafter, we obtain a pair of adjoint functors (see Theorem \ref{Thm3.9})
            \begin{equation}\label{1.3}
            C\Box-:\mathscr Mult_k\longrightarrow \mathcal Comm^{\geq 0}_k\qquad [C,-]: \mathcal Comm^{\geq 0}_k\longrightarrow \mathscr Mult_k
            \end{equation} where the right adjoint $[C,-]: \mathcal Comm^{\geq 0}_k\longrightarrow \mathscr Mult_k$ is the restriction of $[C,-]:\mathscr Mult_k\longrightarrow \mathscr Mult_k$  to the full subcategory $\mathcal Comm^{\geq 0}_k$  of $\mathscr Mult_k$. If $(C,\Phi)$ is a coalgebra measuring from $A_\bullet$ to $A'_\bullet$, we show (see Theorem \ref{Thm3.6}) that there is an induced linear map
            \begin{equation}\label{1.8}
            Hoch^\Phi: C\longrightarrow \mathscr Gr(\mathcal C_{k})(Hoch(A_\bullet),Hoch(A'_\bullet))
            \end{equation} which gives a coalgebra measuring $(C,Hoch^\Phi)$  between the graded algebras $Hoch(A_\bullet)$, $Hoch(A'_\bullet)\in \mathscr Gr\mathcal Alg(\mathcal C_k) $. Considering the definition of the product on $Hoch(A_\bullet)$ given in \eqref{1.5}, this builds on the results in \cite{AB}, where we showed that maps induced by coalgebra measurings on Hochschild homology behave well with respect to the shuffle product. 
            
            \smallskip
            In Section 4, we construct the coalgebra $\mathcal M_c(A_\bullet, A_\bullet')$ that is universal among the (cocommutative) measurings between multiplicative sequences 
            $A_\bullet$, $A'_\bullet\in \mathscr{M}ult_k$. This leads to an enrichment $MULT_k$ of multiplicative sequences over the symmetric monoidal category $CoCo\mathcal Alg_k$ of cocommutative $k$-coalgebras. By considering the universal cocommutative measuring coalgebra $Q_c(Hoch(A_\bullet), Hoch(A'_\bullet) )$ between the graded algebras  $Hoch(A_\bullet)$ and $Hoch(A'_\bullet)$, we have another enrichment $\widetilde{MULT_k}$ of mutliplitcative sequences. Then, the  main result (see Theorem \ref{thm4.5}) of Section 4 is that we have a  $CoCo\mathcal Alg_k$-enriched functor $\gamma: MULT_k\longrightarrow \widetilde{MULT_k}$ that is identity on objects and whose maps on Hom objects are obtained from the linear maps induced as in \eqref{1.8}.

            \smallskip
            In Section 5, we consider coefficients for the Hochschild theory of multiplicative sequences. For this, we  introduce an appropriate notion of a bimodule $(U_\bullet=\{U_n\}_{n\geq 0},\vartheta=\{\vartheta_{m,n}\}_{m,n\geq 0})$ over a multiplicative sequence $(A_\bullet,\tau)\in \mathscr{M}ult_k$. Each $U_n$ is a bimodule over $A_n$ and we have additional maps $\vartheta_{m,n}:A_m\otimes U_n\longrightarrow U_{m+n}$ satisfying certain conditions that we describe in Definition \ref{D5.1kos}. Just as coalgebra measurings generalize homomorphisms of algebras, we know (see \cite{MB}, \cite{MH}) that there is a similar concept of comodule measurings, which generalizes maps between modules.  If $C$ is a cocommutative coalgebra and $P$ is a $C$-comodule, we introduce (see Definition \ref{defn5.8}) the notion of a comodule measuring $(P,\Psi)$ over $(C,\Phi)$ from $U_\bullet$ to $U'_\bullet$, where $U_\bullet$ and $U'_\bullet$ are bimodules over 
            $A_\bullet$, $A'_\bullet\in \mathscr{M}ult_k$ respectively, and $(C,\Phi)$ is a measuring from $A_\bullet$ to $A'_\bullet$. We construct the universal object $\mathcal P_C(U_\bullet,U_\bullet')$ among comodule measurings from $U_\bullet$ to $U'_\bullet$. The pairs $(A_\bullet,U_\bullet)$ where $A_\bullet\in \mathscr{M}ult_k$ and $U_\bullet$ is a bimodule over 
            $A_\bullet$, now form a ``global category of bimodules'' over multiplicative sequences, which we denote by $BMod_{\mathscr Mult_k}$ (see Theorem \ref{ils5.11}). The category $BMod_{\mathscr Mult_k}$ is enriched over the symmetric monoidal category $CoMod^c_k$ whose objects are pairs $(C,P)$, with $C$ a cocommutative $k$-coalgebra and $P$ a comodule over $C$. Similar to \eqref{1.5}, the shuffle product on Hochschild complexes with bimodule coefficients now gives us maps
            \begin{equation}\label{1.9}
            \varsigma_{m,n}:C_p(A_m,A_m)\otimes  C_q(A_n,U_n)\xrightarrow {\quad sh_{p,q}\quad}C_{p+q}(A_m\otimes A_n,A_m\otimes U_n)\xrightarrow{\quad C_{p+q}(\tau_{m,n},\vartheta_{m,n})\quad}  C_{p+q}(A_{m+n},U_{m+n})
            \end{equation} Using \eqref{1.9}, we show that $C_*(A_\bullet,U_\bullet)=\{C_*(A_n,U_n)\}_{n\geq 0}\in  \mathscr Gr(\mathcal C_k)$ becomes a graded left module over the graded algebra $Hoch(A_\bullet)=\left\{C_*(A_n)=C_*(A_n,A_n)\right\}_{n\geq 0}\in   \mathscr Gr\mathcal Alg(\mathcal C_k)$. Further, a comodule measuring $(P,\Psi)$ over $(C,\Phi)$ induces a comodule measuring 
            \begin{equation}
              \widetilde{\Psi}:P\longrightarrow \mathscr Gr(\mathcal C_k)(C_*(A_\bullet,U_\bullet),C_*(A_\bullet',U_\bullet')) 
            \end{equation} between the graded modules $C_*(A_\bullet,U_\bullet)$, $C_*(A'_\bullet,U'_\bullet)$ over the coalgebra measuring $(C,Hoch^\Phi)$ between graded algebras $Hoch(A_\bullet)$, $Hoch(A'_\bullet)$ described in \eqref{1.8}. By considering the universal measuring comodule between the modules $C_*(A_\bullet,U_\bullet)$ and $C_*(A'_\bullet,U'_\bullet)$, we have another enrichment $\widetilde{BMod}_{\mathscr Mult_k}$ of the global category of bimodules over multiplicative sequences, as well as a functor $(\gamma,\lambda): BMod_{\mathscr Mult_k}\longrightarrow \widetilde{BMod}_{\mathscr Mult_k}$ between categories enriched over $CoMod^c_k$ (see Theorem \ref{Thm5.17gm}). 
            
            \smallskip
            Finally, in Section 6, we consider the coalgebraic counterpart of multiplicative sequences of algebras. Accordingly, a comultiplicative sequence $(Q_\bullet,\delta)$ consists of a collection $Q_\bullet=\{\mbox{$Q_n$ $\vert$ $n\geq 0$} \}$ of coalgebras, with $Q_0=k$, and a collection of coalgebra morphisms 
            \begin{equation}
          \delta=\{  \delta_{m,n}:Q_{m+n}\longrightarrow Q_m\otimes Q_n \}_{m,n\geq 0}
            \end{equation} satisfying certain coassociativity conditions that we describe in Definition \ref{defn6.1}. By taking finite duals, we show that the usual adjunction between algebras and coalgebras extends to an adjunction between the category $\mathscr Mult_k$ of multiplicative sequences of algebras and the category $ \mathcal Co\mathscr Mult_k$ of comultiplicative sequences of coalgebras. In particular, this gives a  large class of examples of comultiplicative sequences of coalgebras obtained from the finite duals of objects in $\mathscr Mult_k$. 
            
            \smallskip If $C$ is a cocommutative coalgebra, its coproduct $\Delta:C\longrightarrow 
            C\otimes C$ is a morphism of coalgebras, and hence the collection $\{\mbox{$Q_0:=k$, $Q_i:=C$, $i\geq 1$}\}$ equipped with maps $\Delta:C\longrightarrow C
            \otimes C$ becomes a comultiplicative sequence. This suggests that a cocommutative algebra can be replaced by a comultiplicative sequence of coalgebras. Accordingly, we introduce (see Definition \ref{def6.6}) comultiplicative measuring sequences of coalgebras $(Q_\bullet,\mho_\bullet=\left\{\mho_n:Q_n\longrightarrow  {Hom}_k(A_n,A_n')\right\}_{n\geq 0})$ between objects
            $A_\bullet$, $A'_\bullet\in \mathscr Mult_k$. Further, for any  $(Q_\bullet,\delta)\in  \mathcal Co\mathscr Mult_k$, we have as in \eqref{1.3} a pair of adjoint functors
             \begin{equation}\label{1.12}
            (Q\Box-)_\bullet:\mathscr Mult_k\longrightarrow \mathcal Comm^{\geq 0}_k\qquad [Q,-]_\bullet: \mathcal Comm^{\geq 0}_k\longrightarrow \mathscr Mult_k
            \end{equation} For $A_\bullet\in \mathscr Mult_k$, we also show that the Sweedler product like construction $  (Q\Box A)_\bullet\in \mathcal Comm^{\geq 0}_k$  in \eqref{1.12}  gives a universal target for comultiplicative measuring sequences  from $A_\bullet$ to objects $B_\bullet\in \mathcal Comm^{\geq 0}_k$. We conclude by showing that if $A_\bullet,A'_\bullet\in \mathscr{M}ult_k$ and $(Q_\bullet,\mho_\bullet=\{\mho_n\}_{n\geq 0})$ is a comultiplicative measuring sequence of coalgebras from $A_\bullet$ to $A_\bullet'$, then the maps (see 
            Theorem \ref{Thm6.11})
            \begin{equation}
            Hoch_{n}^{\mho}:Q_n\longrightarrow  \mathcal C_k(C_*(A_n),C_*(A'_n)) \quad  x^n\longmapsto Hoch_{n}^{\mho}(x^n):=C^{\mho_n}_*(x^n):Hoch(A_\bullet)_n=C_*(A_n)\longrightarrow C_*(A_n')=Hoch(A'_\bullet)_n
            \end{equation} determine a comultiplicative measuring sequence of coalgebras between the graded algebras $Hoch(A_\bullet)$, $Hoch(A'_\bullet)\in \mathscr Gr\mathcal Alg(\mathcal C_k) $.

            \smallskip
            {\bf Acknowledgements:} Authors AB and SK were partially supported by ANRF Research Grant CRG/2023/004143 from the Government of India.

     \section{Hochschild functor on multiplicative sequences  and the Lie derivative}
   Let $k$ be a field of characteristic zero. Unless otherwise mentioned, all algebras are taken over $k$. We write $\mathcal Vect_k$ for the category of $k$-vector spaces and $\mathcal Alg_k$ for the category of unital $k$-algebras. We begin by recalling the notion of a multiplicative sequence of $k$-algebras due to Davydov and Molev \cite{DAV}.  
   
\begin{defn}\label{def2.1}(see \cite[\S 2]{DAV})
A multiplicative sequence $({A}_\bullet,\tau)$ of $k$-algebras consists of:

\smallskip
(a) 
    A collection ${A}_\bullet=\{A_n \mid n \ge 0\}$ of associative unital $k$-algebras.

\smallskip
    (b)  A collection 
    $
        \tau=\{\tau_{m,n}: A_m \otimes A_n \longrightarrow A_{m+n}\}_{m,n\geq 0}
    $
    of unital algebra morphisms satisfying the following associativity condition, $i.e,$ the diagram 
\begin{equation}\label{diag1}
\begin{tikzcd}[column sep=4em, row sep=3em]
A_l \otimes A_m \otimes A_n
\arrow[r, "\tau_{l,m} \otimes A_n"]
\arrow[d, "A_l \otimes \tau_{m,n}"']
& A_{l+m} \otimes A_n \arrow[d, "\tau_{l+m,n}"] \\
A_l \otimes A_{m+n}
\arrow[r, "\tau_{l,m+n}"']
& A_{l+m+n}
\end{tikzcd}
\end{equation}
commutes for all $l,m,n \ge 0$. We always assume that $A_0=k$ and each $\tau_{0,n}=id:A_n=k\otimes A_n=A_0\otimes A_n\xrightarrow{\tau_{0,n}}A_n$ and similarly that each $\tau_{n,0}=id$.

Let $({A}_\bullet,\tau)$, $({A}_\bullet',\tau')$ be multiplicative sequences of algebras. A morphism $f_\bullet:(A_\bullet,\tau)\longrightarrow (A'_\bullet,\tau')$ of multiplicative sequences is a collection $\{f_n:A_n\longrightarrow A'_n\}_{n\geq 0}$  of algebra morphisms such that the following diagram 
\begin{equation}\label{diag2}
\begin{tikzcd}[column sep=3em, row sep=2em]
A_m \otimes A_n
  \arrow[r, "f_m \otimes f_{n}"]
  \arrow[d, "\tau_{m,n}"']
&
A'_{m} \otimes A'_n
  \arrow[d, "\tau_{m,n}'"]
\\
A_{m+n}
  \arrow[r, "f_{m+n}"']
&
A'_{m+n}
\end{tikzcd}
    \end{equation}
commutes for all $m,n\geq 0.$ We denote by $\mathscr{M}ult_k$ the category of multiplicative sequences of $k$-algebras.  We will often denote an 
object $({A}_\bullet,\tau)\in \mathscr{M}ult_k$ simply by ${A}_\bullet$. 
\end{defn}

We now let $\mathcal C_{k}:=\mathcal Ch(\mathcal Vect_k)$ be the category of chain complexes of $k$-vector spaces. We recall (see, for instance, \cite[\S 1.1.3]{LD}) that for a $k$-algebra $A$, its Hochschild complex $(C_*(A),b)\in \mathcal C_k$ is defined as follows
\begin{equation}
    C_*(A):\hspace{.5cm}\cdots\xrightarrow[\qquad]{b}C_n(A)\xrightarrow[\qquad]{b}C_{n-1}(A)\xrightarrow[\qquad]{b}\cdots \xrightarrow[\qquad]{b}C_0(A),
\end{equation}
where $C_n(A)=A\otimes A^{\otimes n}$ and the differentials are given by $$b:C_n(A)\longrightarrow C_{n-1}(A)$$ 
\begin{equation}
(a_0,a_1,\ldots,a_n)
\longmapsto(a_0a_1,a_2,\ldots,a_n)
+ \sum_{i=1}^{n-1} (-1)^i (a_0,a_1,\ldots,a_i a_{i+1},\ldots,a_n)
+ (-1)^n (a_n a_0,a_1,\ldots,a_{n-1}),
\end{equation}
where we write $(a_0,a_1,\ldots,a_n)$ for the element $(a_0\otimes a_1\otimes ...\otimes a_n)\in C_n(A)=A^{\otimes n+1}$.  The association 
$A\mapsto C_*(A)$ determines a functor $\mathcal Alg_k\longrightarrow \mathcal C_k$ to the category of chain complexes.

\smallskip
We now recall the shuffle product on Hochschild complexes. 
  The symmetric group  $S_n$ acts on $C_n(A)=A\otimes A^{\otimes n}$   (see \cite[\S 4.2.1]{LD}) as 
\begin{equation}\label{act2}
    \sigma\cdot(a_0,a_1,a_2,\ldots,a_n)=(a_0,a_{\sigma^{-1}(1)},a_{\sigma^{-1}(2)},\ldots,a_{\sigma^{-1}(n)})
\end{equation} for $\sigma\in S_n$. 
 By definition, a $(p,q)$-shuffle is a permutation $\sigma$ of $\{1,2,\ldots p,p+1,\ldots,p+q\}$ such that $\sigma(1)<\sigma(2)<\ldots<\sigma(p)$ and $\sigma(p+1)<\sigma(p+2)<\ldots<\sigma(p+q)$. Let $S_{p,q}$ denote the set of $(p,q)$-shuffles.
 Then, the shuffle product $sh:C_*(A)\otimes C_*(A')
\longrightarrow C_*(A\otimes A')$ (see \cite[\S 4.2]{LD}) on Hochschild complexes is determined by 
\begin{equation*}
    sh_{p,q}:C_p(A)\otimes C_q(A')\longrightarrow C_{p+q}(A\otimes A'),
\end{equation*}
\begin{equation}\label{SH}
     (a_0,a_1,\ldots,a_p)\otimes(a'_0,a'_1,\ldots,a'_q)\longmapsto \sum_{\sigma\in S_{p,q}}sgn(\sigma)\sigma\cdot(a_0\otimes a'_0,a_1\otimes1,\ldots,a_p\otimes1,1\otimes a'_1,\ldots,1\otimes a'_q),
\end{equation}
Our first main objective is to define a Hochschild functor on multiplicative sequences of algebras. We let  $\mathscr Gr(\mathcal C_k)$ denote the category
of non-negatively graded objects in $\mathcal C_k$. We also know that the tensor product of chain complexes makes  $\mathcal C_k$ into a symmetric monoidal category. Accordingly, we can consider the category  $\mathscr Gr\mathcal Alg(\mathcal C_k)$ of non-negatively graded algebra objects in 
$\mathcal C_k$. In other words, an object $T=(\{T_{n,*}\}_{n\geq 0},\{\mu_{m,n}\}_{m,n\geq 0} )\in \mathscr Gr\mathcal Alg(\mathcal C_k)$ is given by a collection $\{T_{n,*}\}_{n\geq 0}$ of chain complexes along with maps
\begin{equation}
\mu_{m,n}:T_{m,*}\otimes T_{n,*}\longrightarrow T_{m+n,*}\qquad m,n\geq 0
\end{equation} in $\mathcal C_k$ such that $\underset{n\geq 0}{\bigoplus} T_{n,*}$ becomes an algebra object in the symmetric monoidal category 
$\mathcal C_k$. We are now ready to introduce the Hochschild functor on multiplicative sequences that takes values in graded algebra objects in $\mathcal C_k$.

\begin{defn}\label{DL2.2xr} Let $({A}_\bullet=\{A_n\}_{n\geq 0},\tau=\{\tau_{m,n}\}_{m,n\geq 0})$ be a multiplicative sequence of algebras.  For each 
algebra $A_n$, we consider its Hochschild complex $C_*(A_n)\in \mathcal C_k$. For $m$, $n\geq 0$, we consider the composition
\begin{equation}\label{2.7yu}
\tau_{m,n}^{sh}:C_*(A_m)\otimes C_*(A_n)\xrightarrow{\qquad sh\qquad }C_*(A_m\otimes A_n)\xrightarrow{\qquad C_*(\tau_{m,n})\qquad}C_*(A_{m+n})
\end{equation} in $\mathcal C_k$, where the first map in \eqref{2.7yu} is given by the shuffle product and the second map in \eqref{2.7yu} is induced by the morphism $\tau_{m,n}: A_m\otimes A_n\longrightarrow A_{m+n}$ of algebras. The association
\begin{equation} Hoch:\mathscr{M}ult_k\longrightarrow \mathscr Gr\mathcal Alg(\mathcal C_k) \qquad ({A}_\bullet,\tau) \mapsto (\{C_*(A_n)\}_{n\geq0},\tau^{sh}=\{\tau^{sh}_{m,n}\}_{m,n\geq 0})
\end{equation} will be called the Hochschild functor on multiplicative sequences of algebras. 
\end{defn}

For $({A}_\bullet,\tau)\in \mathscr{M}ult_k$, we note here that the associativity of the multiplication $\tau^{sh}$ on the graded algebra object $Hoch(A_\bullet)\in  \mathscr Gr\mathcal Alg(\mathcal C_k)$ follows from the associativity condition \eqref{diag1} in Definition \ref{def2.1}.

\smallskip

We know that a derivation on a $k$-algebra $A$ is a $k$-linear map $d:A\longrightarrow A$ such that $d(ab)=d(a)b+ad(b)$ for all $a,b\in A$. We now extend this notion to multiplicative sequences of algebras.

\begin{defn}\label{D2.Xr}
    Let $({A}_\bullet,\tau)$ be a multiplicative sequence of algebras. A derivation on $({A}_\bullet,\tau)$ is a collection $d=\{d_i:A_i\longrightarrow A_i\}_{i\geq 0}$ of $k$-linear maps such that 
    \begin{enumerate}[(a)]
        \item $d_0=0$ and each $d_i$ is a derivation on $A_i$ for  $i\geq 1$, and
        \item for all $m,n\geq 0$,  we have 
       $
            d_{m+n}\circ\tau_{m,n}=\tau_{m,n}\circ (d_{m}\otimes A_n+A_m\otimes d_n)
       $.
    \end{enumerate}
    
\end{defn}
We will often denote  a derivation $d=\{d_i:A_i\longrightarrow A_i\}_{i\geq 0}$ on a multiplicative sequence $({A}_\bullet,\tau)$ simply by $d:{A}_\bullet\longrightarrow {A}_\bullet$. We now present several examples of derivations on multiplicative sequences of algebras. 

\begin{eg}\label{eg1}
\emph{Let $A$ be a commutative $k$-algebra having   product map $\mu:A\otimes A\longrightarrow A$. Since $A$ is commutative, we note that $\mu$ is a morphism of algebras. We set $A_0:=k$ and $A_i:=A$ for $i\geq 1$, along with $\tau_{m,n}:=\mu:A\otimes A\longrightarrow A$ for all $m,n\geq 1$. It is easy to verify that $A_\bullet:=\{A_i\}_{i\geq 0}$ together with $\tau:=\{\tau_{m,n}\}_{m,n\geq 0}$ forms a multiplicative sequence of algebras. If $d: A\longrightarrow A$ is a derivation on $A$, then the collection 
    $\{d_i:=d\}_{i\geq1}$ together with $d_0:=0$ forms a derivation on ${A}_\bullet$ in the sense of Definition \ref{D2.Xr}.    }
\end{eg}
\begin{eg}
    \emph{Let $\mathscr{C}$ be a strict monoidal category such that $\mathscr C(I,I)=k$, where $I$ is the unit object of $\mathscr{C}$. We fix some  $X\in \mathscr C$ and put} $$A_n:=\mathscr C(X^{\otimes n},X^{\otimes n})\qquad\forall n\geq 0 .$$ 
    \emph{Then ${A}_\bullet:=\{A_n\}_{n\geq 0}$ forms a multiplicative sequence of algebras, where the maps $\tau_{m,n}:A_m\otimes A_n\longrightarrow A_{m+n}$ are given by the tensor product on morphisms  (see \cite[\S 2.1]{DAV}).
    We pick some $f\in A_1=\mathscr C(X,X)$. Then we know that the map $d_1:A_1\longrightarrow A_1$ given  by $g\mapsto fg-gf$ is a derivation on $A_1.$  For $n\geq 1$, we consider  $$f_n:=\sum_{i=1}^n1^{\otimes (i-1)}\otimes f\otimes 1^{\otimes (n-i)}:X^{\otimes n}\longrightarrow X^{\otimes n}$$
   Then, $f_n\in A_n$. For $n\geq 1$, we now set} \begin{equation}
        d_n:A_n\longrightarrow A_n\qquad g\mapsto f_ng-gf_n 
    \end{equation}
  \emph{It may be verified that the collection $d=\{d_i:A_i\longrightarrow A_i\}_{i\geq 0}$, where $d_0=0$, becomes a derivation on $A_\bullet$.}
\end{eg}
\begin{eg}\label{eg2.6}
    \emph{Let $A$ be a commutative $k$-algebra. We consider the collection $A_\bullet=\{A_n\}_{n\geq 0}$, where $A_0=k$ and 
        $A_n=A[x_1,x_2,\ldots,x_n]$ for ${ n\geq1}$. For $m,n\geq 1$ we consider the algebra maps}
        \begin{equation*}
            \tau_{m,n}:A_m\otimes A_n\longrightarrow A_{m+n}\qquad f\otimes g\longmapsto f\cdot g
        \end{equation*}
     \emph{It is easy to check that this determines a multiplicative sequence of algebras. 
        Further, it may also be verified that the collection $d=\big\{d_n=(\sum_{i=1}^n\frac{ \partial}{\partial x_i}):A_n\longrightarrow A_n\big\}_{n\geq 0}$, with $d_0=0$, forms a derivation on ${A}_\bullet$.}
\end{eg}
\begin{eg}
 \emph{Let $A$ be a commutative $k$-algebra. We consider ${A}_\bullet=\{A_n\}_{n\geq 0}$ by setting $A_0=k$, $A_n=A[x_1,x_2,\ldots,x_n]$ for $n\geq 1$. Then, ${A}_\bullet$ together with the collection $\tau=\{\tau_{m,n}:A_m\otimes A_n\longrightarrow A_{m+n}\}_{m,n\geq 0}$
   where} \begin{equation*}
       \tau_{m,n}:A[x_1,x_2,\ldots,x_m]\otimes A[x_1,x_2,\ldots,x_n]\longrightarrow A[x_1,x_2,\ldots,x_m,x_{m+1},\ldots,x_{m+n}]
   \end{equation*}
\emph{is given by} \begin{equation*}
       f(x_1,x_2,\ldots,x_m)\otimes g(x_1,x_2,\ldots,x_n)\longmapsto f(x_1,x_2,\ldots,x_m)\cdot g(x_{m+1},\ldots,x_{m+n})
   \end{equation*}
\emph{for all $m,n\geq 1$  forms a multiplicative sequence of algebras.
   It may be verified that the collection $d=\big\{d_n=(\sum_{i=1}^n\frac{ \partial}{\partial x_i}):A_n\longrightarrow A_n\big\}_{n\geq 0}$, with $d_0=0$, forms a derivation on ${A}_\bullet$.}
\end{eg}
    
    If $d:A\longrightarrow A$ is a derivation on an algebra $A$, we recall (see, for instance, \cite[\S 4.1.4]{DAV}) that the maps
    \begin{equation}\label{eq2.11}
       \mathscr{L}(d)_n:C_n(A)=A\otimes A^{\otimes n} \longrightarrow A\otimes A^{\otimes n} =C_n(A)\qquad  \mathscr{L}(d)_{n}(a_0,\ldots,a_n):=\sum_{i\geq 0}(a_0,\ldots,a_{i-1},da_i,a_{i+1},\ldots,a_n)
    \end{equation}
  determine a morphism $\mathscr{L}(d): C_*(A)\longrightarrow C_*(A)$ on the Hochschild chain complex of $A$, known as the Lie derivative. We will first extend the Lie derivative to multiplicative sequences of algebras. For this, we let $\mathscr Gr (\mathcal C_k)$ denote the category of non-negatively graded objects of $\mathcal C_k$. 
  
	\begin{lem}\label{GR}
	    Let $({A}_\bullet,\tau)\in \mathscr Mult_k$  and let $d=\{d_i\}_{i\geq 0}:{A}_\bullet\longrightarrow {A}_\bullet$ be a derivation on ${A}_\bullet$. Then, $d$ induces a map $\mathscr{L}(d):Hoch({A}_\bullet)\longrightarrow Hoch(A_\bullet)$ in $\mathscr Gr (\mathcal C_k)$.
	\end{lem}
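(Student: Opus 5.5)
We define $\mathscr{L}(d)$ levelwise and reduce everything to the classical Lie derivative. For each $n\geq 0$, let $\mathscr{L}(d)_{(n)}:=\mathscr{L}(d_n):C_*(A_n)\longrightarrow C_*(A_n)$ be the Lie derivative of the derivation $d_n$ on $A_n$, given by \eqref{eq2.11}. For $n=0$, $d_0=0$, so $\mathscr{L}(d)_{(0)}=0$.

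A morphism in $\mathscr Gr(\mathcal C_k)$ is a collection of chain maps, one in each degree, and it need not be compatible with the multiplications $\tau^{sh}$. So it suffices to check that each $\mathscr{L}(d_n)$ commutes with the Hochschild boundary $b$ on $C_*(A_n)$. This is the classical fact recalled before the lemma: the Lie derivative of a derivation is a chain map.

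For completeness, we would verify $b\circ\mathscr{L}(d_n)=\mathscr{L}(d_n)\circ b$ directly. Write $b=\sum_{i=0}^{n}(-1)^i b_i$, where $b_i$ multiplies the $i$-th and $(i+1)$-st entries, cyclically for $i=n$. Set $\theta_j$ to be $d_n$ applied in slot $j$, so $\mathscr{L}(d_n)=\sum_j\theta_j$. Then, for each face $b_i$:
\begin{enumerate}[(i)]
\item If $j$ is not one of the two slots that $b_i$ multiplies, then $b_i\circ\theta_j$ equals $\theta_{j'}\circ b_i$, where $j'$ is the shifted index of slot $j$.
\item For the two slots $j,j+1$ that $b_i$ multiplies, the Leibniz rule $d(xy)=d(x)y+xd(y)$ gives $b_i\circ(\theta_j+\theta_{j+1})=\theta_j\circ b_i$.
\end{enumerate}
The cyclic face $b_n$ is handled in the same way, using $d(a_na_0)=d(a_n)a_0+a_nd(a_0)$. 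Summing over $i$ and $j$ shows that the two sides agree, so each $\mathscr{L}(d_n)$ is a chain map.

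The only step needing care is the bookkeeping of the reindexing across the faces, which is routine. Condition (b) of Definition \ref{D2.Xr} is not needed for this lemma. It will only enter when one proves that $\mathscr{L}(d)$ is a derivation with respect to $\tau^{sh}$. That later proof would combine Leibniz compatibility of $\mathscr{L}$ with the shuffle map, namely $\mathscr{L}(d\otimes 1+1\otimes d')\circ sh=sh\circ(\mathscr{L}(d)\otimes 1+1\otimes\mathscr{L}(d'))$, with naturality of $C_*(\tau_{m,n})$.
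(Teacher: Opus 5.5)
Your proposal is correct and follows the paper's proof. In both, $\mathscr{L}(d)$ is defined levelwise by the classical Lie derivatives $\mathscr{L}(d_n)$, each of which is a chain map because $d_n$ is a derivation, and a morphism in $\mathscr Gr(\mathcal C_k)$ is just such a collection of chain maps, so condition (b) of Definition \ref{D2.Xr} plays no role. Your face-by-face check that $b_i\circ\mathscr{L}(d_n)=\mathscr{L}(d_n)\circ b_i$ spells out the classical fact that the paper only cites; there you reuse $n$ for both the sequence level and the Hochschild degree, and the two should be given separate names.
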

    \begin{proof}
         By definition, the object $Hoch(A_\bullet)=(\{C_*(A_i)\}_{i\geq0},\tau^{sh}=\{\tau^{sh}_{i,j}\}_{i,j\geq 0})\in \mathscr Gr\mathcal Alg(\mathcal C_k)$ determines an object $\{C_*(A_i)\}_{i\geq 0}$ of $\mathscr Gr (\mathcal C_k)$. Since each 
         $d_i:A_i\longrightarrow A_i$ is a derivation, we have an induced map $\mathscr{L}({d_i}):C_*(A_i)\longrightarrow C_*(A_i)$ in $\mathcal C_k$  as in \eqref{eq2.11}. Together, the collection  $\{\mathscr{L}({d_i}):C_*(A_i)\longrightarrow C_*(A_i)\}_{i\geq 0}$ determines a map $\mathscr{L}(d):Hoch({A}_\bullet)\longrightarrow Hoch(A_\bullet)$ in $\mathscr Gr (\mathcal C_k)$.
        
    \end{proof}
    \smallskip
    
     Our final aim in this section is to show that the map $\mathscr{L}({d})$ as defined in Lemma \ref{GR} determines a derivation on $Hoch(A_\bullet)\in \mathscr Gr\mathcal Alg(\mathcal C_k)$.  In general, let $T=\{T_i\}_{i\geq 0}$ be an object of $ \mathscr Gr\mathcal Alg(\mathcal C_k)$, equipped with the multiplication $\mu=\{\mu_{m,n}:T_m\otimes T_n\longrightarrow T_{m+n}\}_{m,n\geq 0}$. Then, a derivation $D$ on $T$ is a map $D:T\longrightarrow T$ in $\mathscr Gr (\mathcal C_k)$, given by a collection $D=\{D_{i}:T_i\longrightarrow T_i\}_{i\geq 0}$ of $k$-linear maps, which satisfies 
        \begin{equation}\label{2/11c}
            D_{m+n}\circ\mu_{m,n}=\mu_{m,n}\circ (D_{m}\otimes T_n+T_m\otimes D_n).
        \end{equation}
    for all $m,n\geq 0$. We now have the following result.
  
    \begin{Thm}\label{Thm2.10}
         Let $({A}_\bullet,\tau)\in \mathscr Mult_k$ and $d=\{d_i\}_{i\geq 0}:{A}_\bullet\longrightarrow {A}_\bullet$ be a derivation on ${A}_\bullet$. Then, the map $\mathscr{L}(d)$  is a derivation on the graded algebra $Hoch(A_\bullet)\in   \mathscr Gr\mathcal Alg(\mathcal C_k)$.
    \end{Thm}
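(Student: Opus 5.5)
We must show that for all $m,n\geq 0$ we have $\mathscr L(d_{m+n})\circ\tau^{sh}_{m,n}=\tau^{sh}_{m,n}\circ(\mathscr L(d_m)\otimes C_*(A_n)+C_*(A_m)\otimes \mathscr L(d_n))$. Lemma \ref{GR} already shows that $\mathscr L(d)$ is a map in $\mathscr Gr(\mathcal C_k)$, so only this Leibniz identity remains. Since $\tau^{sh}_{m,n}=C_*(\tau_{m,n})\circ sh$, the plan is to split the identity into two commuting squares and check each on generators $(a_0,\ldots,a_p)\otimes(a'_0,\ldots,a'_q)\in C_p(A_m)\otimes C_q(A_n)$.

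\emph{First square.} The map $D:=d_m\otimes A_n+A_m\otimes d_n$ is a derivation of the tensor product algebra $A_m\otimes A_n$, so it has a Lie derivative $\mathscr L(D)$ on $C_*(A_m\otimes A_n)$ as in \eqref{eq2.11}. We claim that
\[
\mathscr L(D)\circ sh=sh\circ(\mathscr L(d_m)\otimes C_*(A_n)+C_*(A_m)\otimes\mathscr L(d_n)).
\]
Fix a shuffle $\sigma\in S_{p,q}$. The permutation action only reorders the entries $1,\ldots,p+q$, and $\mathscr L(D)$ is a sum over all positions. Hence $\mathscr L(D)$ commutes with the action of $\sigma$, and it suffices to evaluate $\mathscr L(D)$ on the unpermuted tuple $(a_0\otimes a'_0,a_1\otimes 1,\ldots,a_p\otimes1,1\otimes a'_1,\ldots,1\otimes a'_q)$.
\begin{itemize}
\item At position $0$ we get $D(a_0\otimes a'_0)=d_ma_0\otimes a'_0+a_0\otimes d_na'_0$.
\item At a position of the form $a_i\otimes 1$ we get $d_ma_i\otimes 1$, because $d_n(1)=0$.
\item At a position of the form $1\otimes a'_j$ we get $1\otimes d_na'_j$, because $d_m(1)=0$.
\end{itemize}
Grouping these terms reproduces exactly $sh(\mathscr L(d_m)(a_0,\ldots,a_p)\otimes(a'_0,\ldots))+sh((a_0,\ldots)\otimes\mathscr L(d_n)(a'_0,\ldots,a'_q))$. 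The signs $sgn(\sigma)$ are untouched, since $\mathscr L$ has degree zero and so no Koszul sign arises. The only input is the fact that a derivation kills the unit, which holds for the $d_i$ with $i\geq 1$ and trivially for $d_0=0$.

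\emph{Second square.} We need $\mathscr L(d_{m+n})\circ C_*(\tau_{m,n})=C_*(\tau_{m,n})\circ\mathscr L(D)$. Since $C_*(\tau_{m,n})$ applies $\tau_{m,n}$ entrywise and $\mathscr L$ applies a derivation in one entry at a time, this reduces termwise to $d_{m+n}\circ\tau_{m,n}=\tau_{m,n}\circ D$, which is condition (b) of Definition \ref{D2.Xr}.

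Composing the two squares gives the Leibniz identity. The cases $m=0$ or $n=0$ are consistent: there $\tau_{0,n}$ is the identity, $d_0=0$, and $C_*(k)$ is $k$ in each degree with $\mathscr L(d_0)=0$. The only real obstacle is the bookkeeping in the first square: one must make sure that the derivation hitting the $0$-th slot $a_0\otimes a'_0$ splits correctly into the two summands, and that the unit entries contribute nothing. I expect this to be routine.
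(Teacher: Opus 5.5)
Your proof is correct and follows essentially the same route as the paper. Both arguments use three facts: $\mathscr L$ commutes with the $S_{p+q}$-action, derivations kill units, and condition (b) of Definition \ref{D2.Xr} can be applied slot by slot. The only difference is organizational: you factor through the Lie derivative of $d_m\otimes A_n+A_m\otimes d_n$ on $C_*(A_m\otimes A_n)$ (first compatibility with $sh$, then naturality along $\tau_{m,n}$), whereas the paper carries out the same computation as one chain of equalities.
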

    \begin{proof} Using \eqref{2/11c} and the definition of 
       $\mathscr{L}({d})$ in Lemma \ref{GR}, it suffices to show that
      \begin{equation*}
          \mathscr L(d_{m+n})\circ\tau^{sh}_{m,n}=\tau^{sh}_{m,n}\circ \big(\mathscr L({d}_{m})\otimes 1+1\otimes \mathscr L(d_n)\big)
      \end{equation*} 
      holds for all $m,n\geq 0$.
      
      \smallskip For  $r\geq 0$, we note that  the maps $C_r(\tau_{m,n}):C_r(A_m  \otimes A_n)\longrightarrow C_r(A_{m+n})$ induced by the morphism 
      $\tau_{m,n}:A_m\otimes A_n\longrightarrow A_{m+n}$ of algebras preserves the action (described in \eqref{act2}) of $S_r$ on the terms of the Hochschild complexes.  For each $i\geq 0$, the map $\mathscr{L}(d_i): C_*(A_i)\longrightarrow C_*(A_i)$ induced by the derivation $d_i$ on $A_i$ also preserves the action of the permutation groups on the Hochschild complex. 
      
      \smallskip
     Let $(a^m_0,\ldots,a^m_p)\in C_p(A_m)$ and $(a^n_0,\ldots,a^n_q)\in C_q(A_n)$.
     Then, we obtain the following
      \begin{align*}
          &\mathscr{L}(d_{m+n})_{p+q}\circ \tau^{sh}_{m,n}((a^m_0,\ldots,a^m_p)\otimes (a^n_0,\ldots,a^n_q))\notag\\
          &=\mathscr{L}(d_{m+n})_{p+q}\Bigg(\sum_{\sigma\in S_{p,q}}sgn(\sigma)\cdot\left(C_{p+q}(\tau_{m,n})\left(\sigma\cdot\left((a^m_0\otimes a^n_0),(a^m_1\otimes 1),\ldots,(a^m_p\otimes 1),(1\otimes a^n_1),\ldots,(1\otimes a^n_q)\right)\right)\right)\Bigg)\notag\\
           &=\mathscr{L}(d_{m+n})_{p+q}\Bigg(\sum_{\sigma\in S_{p,q}}sgn(\sigma)\cdot\Big(\sigma\cdot\Big(\tau_{m,n}(a^m_0\otimes a^n_0),\tau_{m,n}(a^m_1\otimes 1),\ldots,\tau_{m,n}(a^m_p\otimes 1),\tau_{m,n}(1\otimes a^n_1),\ldots,\tau_{m,n}(1\otimes a^n_q)\Big)\Big)\Bigg)\notag\\
           \end{align*}
           This further equates to 
           \begin{align}
           &\sum_{\sigma\in S_{p,q}}sgn(\sigma)\sum_{i=1}^{p}\Big(\sigma\cdot\Big(\tau_{m,n}(a^m_0\otimes a^n_0),\tau_{m,n}(a^m_1\otimes 1),\ldots,d_{m+n}\tau_{m,n}(a^m_{i}\otimes 1),\ldots,\tau_{m,n}(a^m_p\otimes 1),\tau_{m,n}(1\otimes a^n_1),\ldots,\tau_{m,n}(1\otimes a^n_q)\Big)\Big)\notag\\
           &\qquad+\sum_{\sigma\in S_{p,q}}sgn(\sigma)\sum_{j=1}^{q}\Big(\sigma\cdot\Big(\tau_{m,n}(a^m_0\otimes a^n_0),\tau_{m,n}(a_1\otimes 1),\ldots,\tau_{m,n}(a^m_p\otimes 1),\tau_{m,n}(1\otimes a^n_1),\ldots,d_{m+n}\tau_{m,n}(1\otimes a^n_{j}),\ldots,\tau_{m,n}(1\otimes a^n_q)\Big)\Big)\notag\\
            &\qquad+\sum_{\sigma\in S_{p,q}}sgn(\sigma)\Big(\sigma\cdot\Big(d_{m+n}(\tau_{m,n}(a^m_0\otimes a_0^n)),\tau_{m,n}(a^m_1\otimes 1),\ldots,\tau_{m,n}(a^m_p\otimes 1),\tau_{m,n}(1\otimes a^n_1),\ldots,\tau_{m,n}(1\otimes a^n_q)\Big)\Big)\notag \\
           &=\sum_{\sigma\in S_{p,q}}sgn(\sigma)\sum_{i=1}^{p}\Big(\sigma\cdot\Big(\tau_{m,n}(a^m_0\otimes a^n_0),\tau_{m,n}(a^m_1\otimes 1),\ldots,\tau_{m,n}(d_{m}(a^m_{i})\otimes 1),\ldots,\tau_{m,n}(a^m_p\otimes 1),\tau_{m,n}(1\otimes a^n_1),\ldots,\tau_{m,n}(1\otimes a^n_q)\Big)\Big)\notag\\
           &\qquad+\sum_{\sigma\in S_{p,q}}sgn(\sigma)\sum_{j=1}^{q}\Big(\sigma\cdot\Big(\tau_{m,n}(a^m_0\otimes a^n_0),\tau_{m,n}(a^m_1\otimes 1),\ldots,\tau_{m,n}(a^m_p\otimes 1),\tau_{m,n}(1\otimes a^n_1),\ldots,\tau_{m,n}(1\otimes d_n(a_{j}^n)),\ldots,\tau_{m,n}(1\otimes a^n_q)\Big)\Big)\notag\\
           &\qquad+\sum_{\sigma\in S_{p,q}}sgn(\sigma)\Big(\sigma\cdot\Big(\tau_{m,n}(d_m(a^m_0)\otimes a_0^n+a^m_0\otimes d_n(a^n_0)),\tau_{m,n}(a^m_1\otimes 1),\ldots,\tau_{m,n}(a^m_p\otimes 1),\tau_{m,n}(1\otimes a^n_1),\ldots,\tau_{m,n}(1\otimes a^n_q)\Big)\Big)\notag\\
           &=C_{p+q}(\tau_{m,n})\circ sh_{p,q}\Bigg(\Bigg(\sum_{i=0}^{p}(a^m_0,a^m_1,\ldots,d_m(a^m_i)\ldots,a^m_p)\Bigg)\otimes(a^n_0,a^n_1,\ldots,a^n_q)\Bigg)\notag\\
           &\qquad+C_{p+q}(\tau_{m,n})\circ sh_{p,q}\Bigg((a^m_0,a^m_1,\ldots,a^m_p)\otimes \Bigg(\sum_{i=0}^{q}(a_0^n,a_1^n,\ldots,d_n(a_i^n),\ldots,a_q^n)\Bigg)\Bigg)\notag
           \end{align}
           \begin{align}
           &=\tau^{sh}_{m,n}\Big((\mathscr L(d_m)_{p}\otimes 1)\big((a^m_0,a^m_1,\ldots,a^m_p)\otimes(a^n_0,a^n_1,\ldots,a^n_q)\big)\Big)+\tau^{sh}_{m,n}\Big( (1\otimes\mathscr L(d_n)_{q}\big((a^m_0,a^m_1,\ldots,a^m_p)\otimes(a^n_0,a^n_1,\ldots,a^n_q)\big)\Big)\notag\\
           &=\tau^{sh}_{m,n}\circ(\mathscr L(d_m)_{p}\otimes1+1\otimes\mathscr L(d_n)_{q})((a^m_0,a^m_1,\ldots,a^m_p)\otimes(a^n_0,a^n_1,\ldots,a^n_q))\notag
      \end{align}
      This completes the proof.
    \end{proof}
    \section{Coalgebra measurings between multiplicative sequences}\label{section3}
    
       Let $A$, $A'$ be $k$-algebras. We recall (see Sweedler \cite{SW}) that a coalgebra measuring from $A$ to $A'$  consists of  a pair $(C,\Phi)$, where $C$ is a $k$-coalgebra and 
       $\Phi:C\longrightarrow  {Hom}_{k}(A,A')$ is a $k$-linear map that satisfies
    \begin{equation}\label{eq3.1}
        \Phi(x)(a_1a_2)=\sum(\Phi(x_{(1)})(a_1))(\Phi(x_{(2)})(a_2))\qquad\Phi(x)(1_A)=\epsilon(x)1_{A'} \qquad x\in C, a_1,a_2\in A
    \end{equation} where   the coproduct $\Delta$ on $C$ is written as $\Delta(x)=\sum x_{(1)}\otimes x_{(2)}$ for all $x\in C$, and $\epsilon:C\longrightarrow k$ denotes the counit on
    $C$. We will often suppress the summation and write the coproduct simply as $\Delta(x):=x_{(1)}\otimes x_{(2)}$. By abuse of notation, the map  $\Phi:C\longrightarrow  {Hom}_{k}(A,A')$ will often be understood also as a map $\Phi:C\otimes A\longrightarrow A'$. 
    
    \smallskip
    In this section as well as in the rest of this paper, we will always take $C$ to be a cocommutative $k$-coalgebra. We now introduce the notion of coalgebra measuring between multiplicative sequences of algebras.

    \begin{defn}\label{D3.r1}
        Let $(A_\bullet,\tau)$, $(A'_\bullet,\tau')$ be multiplicative sequences of algebras. A coalgebra measuring $(C,\Phi)$ from $(A_\bullet,\tau)$ to $(A'_\bullet,\tau')$ consists of 
        \begin{enumerate}[(a)]
            \item a cocommutative $k$-coalgebra $(C,\Delta,\epsilon)$, and
            \item a collection $\Phi=\{\Phi_i: C\longrightarrow  {Hom}_k(A_i,A'_i)\}_{i\geq 0}$ of $k$-linear maps such that 
            
            \begin{enumerate}[(1)]

            \item for $i=0$, we have
            \begin{equation}\label{ut}
            \Phi_0(x):A_0=k\longrightarrow k=A'_0\qquad s\mapsto \epsilon(x)s
            \end{equation} for each $x\in C$. 
            
            \item for $i\geq 0$, each $(C,\Phi_i)$ is a coalgebra measuring from $A_i$ to $A'_i$. In other words, for each $i\geq0$, the map $\Phi_i:C\longrightarrow  {Hom}_k(A_i,A'_i)$ satisfies  
            \begin{equation}
                \Phi_i(x)(a^i_1a^i_2)=\sum\Phi_i(x_{(1)})(a^i_1)\Phi_i(x_{(2)})(a^i_2)\qquad\textit{ and }\qquad\Phi_i(x)1_{A_i}=\epsilon(x)1_{A'_i}
            \end{equation}
            for all $a^i_1,a^i_2\in A_i$ and $x\in C$.
            
            \item
              the collection $\Phi=\{\Phi_i:C\longrightarrow  {Hom}_k(A_i,A'_i)\}_{i\geq 0}$ of measurings is compatible with the multiplicative structures $\tau=\{\tau_{m,n}\}_{m,n\geq 0}$ and 
              $\tau'=\{\tau'_{m,n}\}_{m,n\geq 0}$ on $A_\bullet$ and $A'_\bullet$ respectively, i.e, the following diagram
        \begin{equation}
            \begin{tikzcd}\label{diag4}
A_m \otimes A_n
  \arrow[r, "\tau_{m,n}"]
  \arrow[d, "\sum \Phi_m(x_{(1)})\otimes\Phi_n(x_{(2)})"']
&
A_{m+n}
  \arrow[d, "\Phi_{m+n}(x)"]
\\
A'_{m}\otimes A'_n
  \arrow[r, "\tau'_{m,n}"']
&
A'_{m+n}
\end{tikzcd}
        \end{equation}
       commutes for all $m,n\geq 0$ and   $x\in C$. 
       
               \end{enumerate}
               
               \end{enumerate}
 \end{defn}   

If we treat $\Phi=\left\{\Phi_i\right\}_{i\geq 0}$ appearing in Definition \ref{D3.r1} as a collection of maps $\left\{\Phi_i:C\otimes A_i\longrightarrow A'_i\right\}_{i\geq 0}$, we note that the condition in \eqref{diag4} may also be expressed as
    \begin{equation}\label{diag4prm}
        \Phi_{m+n}\big(x\otimes\tau_{m,n}(a^m\otimes a^n)\big)=\sum \tau_{m,n}'\Big(\Phi_m(x_{(1)}\otimes a^m)\otimes\Phi_n(x_{(2)}\otimes a^n)\Big),
    \end{equation}
    for all  $a^m\in A_m$, $a^n\in A_n$, $x\in C$, and  $m,n\geq 0$.  

    \smallskip
    If $A$ is a $k$-algebra equipped with product $\mu:A\otimes A\longrightarrow A$ and $(C,\Delta,\epsilon)$ is a $k$-coalgebra, we know that the product on the convolution algebra $Hom_k(C,A)$ is given by
     \begin{equation}\label{con3.2}
        {Hom}_k(C,A)\otimes {Hom}_k(C,A)\longrightarrow  {Hom}_k(C,A)\qquad f\otimes g\longmapsto \mu\circ(f\otimes g)\circ\Delta.
     \end{equation} The unit element in $Hom_k(C,A)$ is given by the composition $C\xrightarrow{\epsilon}k\xrightarrow{u}A$, where $u:k\longrightarrow A$ denotes the unit map
     on $A$. 
     
     \smallskip
     We now recall that $\Phi:C\longrightarrow Hom_k(A,A')$ is a coalgebra measuring if and only if the corresponding map $A\longrightarrow Hom_k(C,A')$ is a morphism of algebras. We first extend this to multiplicative sequences of algebras.

    \begin{lem}\label{lem3.3}
        Let $(A_\bullet,\tau) \in \mathscr{M}ult_k$ and let $(C,\Delta,\epsilon)$ be a cocommutative $k$-coalgebra. Then, the collection $[C, A]_\bullet$ given by setting
        \begin{equation}
        [C,A]_0:=k \qquad [C,A]_i:=Hom_k(C,A_i)\qquad i>0
        \end{equation} forms a multiplicative sequence of algebras, equipped with $[\Delta,\tau]=\{[\Delta,\tau]_{m,n}\}_{m,n\geq 0}$, where
        \begin{equation}\label{3.8yg}
            [\Delta,\tau]_{m,n}: [C,A]_m\otimes  [C,A]_n\longrightarrow [C,A]_{m+n} \qquad f\otimes g\longmapsto \tau_{m,n}\circ (f\otimes g)\circ \Delta
        \end{equation}
     for $m$, $n\geq 1$ and $[\Delta,\tau]_{0,l}=id=[\Delta,\tau]_{l,0}$ for all $l\geq 0$. 
    \end{lem}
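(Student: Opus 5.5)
We check the three requirements of Definition \ref{def2.1} for $[C,A]_\bullet$ in turn: each $[C,A]_i$ is a unital associative algebra, each $[\Delta,\tau]_{m,n}$ is a unital algebra morphism, and the associativity square \eqref{diag1} commutes. For $i>0$, $[C,A]_i=Hom_k(C,A_i)$ is the convolution algebra of \eqref{con3.2}, with unit $u_i\circ\epsilon$; this is standard and uses only coassociativity and counitality of $C$. The conventions $[C,A]_0=k$ and $[\Delta,\tau]_{0,l}=id=[\Delta,\tau]_{l,0}$ take care of every case with an index equal to $0$. So from now on we assume $m,n\geq 1$.

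Next we show that $[\Delta,\tau]_{m,n}$ is a morphism of algebras, where $[C,A]_m\otimes[C,A]_n$ carries the tensor product algebra structure. Unitality is quick. We have $\tau_{m,n}\circ(u_m\epsilon\otimes u_n\epsilon)\circ\Delta=\tau_{m,n}(1\otimes 1)\,\epsilon$, since $(\epsilon\otimes\epsilon)\Delta=\epsilon$. This equals $u_{m+n}\epsilon$ because $\tau_{m,n}$ is unital.

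For multiplicativity, take $f,f'\in[C,A]_m$ and $g,g'\in[C,A]_n$ and evaluate both sides at $x\in C$. The left side is
\begin{equation*}
\tau_{m,n}\big(f(x_{(1)})f'(x_{(2)})\otimes g(x_{(3)})g'(x_{(4)})\big).
\end{equation*}
The right side, which is the convolution product of $[\Delta,\tau]_{m,n}(f\otimes g)$ and $[\Delta,\tau]_{m,n}(f'\otimes g')$, is
\begin{equation*}
\tau_{m,n}\big(f(x_{(1)})\otimes g(x_{(2)})\big)\,\tau_{m,n}\big(f'(x_{(3)})\otimes g'(x_{(4)})\big)=\tau_{m,n}\big(f(x_{(1)})f'(x_{(3)})\otimes g(x_{(2)})g'(x_{(4)})\big),
\end{equation*}
where we used that $\tau_{m,n}$ is an algebra map. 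The two expressions differ only by swapping the middle tensor factors $x_{(2)}$ and $x_{(3)}$ of $\Delta^{(3)}(x)$. Cocommutativity of $C$, combined with coassociativity, makes $\Delta^{(3)}$ invariant under this swap, so the two sides agree. This is the one place where cocommutativity is needed, and it is the only nonformal step of the proof.

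Finally, associativity. For $f\in[C,A]_l$, $g\in[C,A]_m$ and $h\in[C,A]_n$, both composites in \eqref{diag1} evaluate at $x$ to $\tau(\tau(f(x_{(1)})\otimes g(x_{(2)}))\otimes h(x_{(3)}))$ and $\tau(f(x_{(1)})\otimes\tau(g(x_{(2)})\otimes h(x_{(3)})))$ respectively, after applying coassociativity of $\Delta$. These coincide by the associativity square \eqref{diag1} for $A_\bullet$. If one of $l,m,n$ is $0$, the identity conventions make the square trivially commute.

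Together these three checks show that $([C,A]_\bullet,[\Delta,\tau])$ is a multiplicative sequence of algebras, which proves the lemma.
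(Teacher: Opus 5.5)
Your proposal is correct and follows essentially the same route as the paper. Both arguments verify multiplicativity of $[\Delta,\tau]_{m,n}$ by evaluating at $x\in C$, using that $\tau_{m,n}$ is an algebra map and cocommutativity to swap the middle Sweedler factors, and both obtain the associativity square from coassociativity of $\Delta$ together with \eqref{diag1} for $A_\bullet$; you additionally write out the unit check, which the paper only says may be verified.
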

    \begin{proof}
       We first show that $[\Delta,\tau]_{m,n}$ is an algebra morphism for all $m,n\geq 0$. From the definition, this is clear when either $m=0$ or $n=0$.  For $m,n\geq 1$, we take $(f\otimes g),(f'\otimes g')\in [C,A]_m\otimes [C,A]_n=  {Hom}_k(C,A_m)\otimes  {Hom}_k(C,A_n) $. For any $x\in C$, we now have
        \begin{align*}
            [\Delta,\tau]_{m,n}((f\otimes g)\cdot (f'\otimes g'))(x)
            &=[\Delta,\tau]_{m,n}((f\cdot f')\otimes (g\cdot g'))(x)\\
            &=\tau_{m,n}\Big( (f\cdot f')(x_{(1)})\otimes (g\cdot g')(x_{(2)})\Big)\tag{by \eqref{3.8yg}}\\
            &=\tau_{m,n}\Big(f(x_{(1)})f'(x_{(2)})\otimes g(x_{(3)})g'(x_{(4)})\Big)\tag{by \eqref{con3.2}}\\
            &=\tau_{m,n}\Big((f(x_{(1)})\otimes g(x_{(3)}))\cdot (f'(x_{(2)})\otimes g'(x_{(4)}))\Big)\\
            &=\tau_{m,n}\Big((f(x_{(1)})\otimes g(x_{(3)}))\Big)\cdot  \tau_{m,n}\Big((f'(x_{(2)})\otimes g'(x_{(4)})\Big)\tag{as $\tau_{m,n}$ is an algebra morphism}\\
            &=\tau_{m,n}\Big((f(x_{(1)})\otimes g(x_{(2)}))\Big)\cdot  \tau_{m,n}\Big((f'(x_{(3)})\otimes g'(x_{(4)})\Big) \tag{as $C$ is cocommutative}\\
            &=\left([\Delta,\tau]_{m,n}(f\otimes g)(x_{(1)})\right)\cdot  \left([\Delta,\tau]_{m,n}(f'\otimes g')(x_{(2)})\right) \\
            &=\Big([\Delta,\tau]_{m,n}(f\otimes g)\cdot [\Delta,\tau]_{m,n}(f'\otimes g')\Big)(x).
        \end{align*}
        It may also be verified that  $[\Delta,\tau]_{m,n}: [C,A]_m\otimes  [C,A]_n\longrightarrow [C,A]_{m+n} $ preserves the unit. This shows that 
        each $[\Delta,\tau]_{m,n}$ is a morphism of algebras.  It remains to show that   the collection 
        $[\Delta,\tau]=\left\{[\Delta,\tau]_{m,n}\right\}_{m,n\geq 0}$ satisfies the associativity condition \eqref{diag1} in Definition \ref{def2.1}. For any $l,m,n\geq 1$ and $f\in [C,A]_l$, 
        $g\in [C,A]_m$, $h\in [C,A]_n$, we  see that
        \begin{align*}
           \left( [\Delta,\tau]_{l+m,n}\circ\left([\Delta,\tau]_{l,m}\otimes  [C, A]_n\right)\right) (f\otimes g\otimes h)
            &=[\Delta,\tau]_{l+m,n}\Big(\big(\tau_{l,m}\circ(f\otimes g)\circ \Delta \big)\otimes h)\Big)\\
            &=\tau_{l+m,n} \Big((\tau_{l,m}\circ (f\otimes g)\circ \Delta)\otimes h\Big)\circ \Delta\\
            &= \tau_{l+m,n}(\tau_{l,m}\otimes A_n)(f\otimes g\otimes h)\circ (\Delta\otimes C)\circ \Delta \tag{by condition \eqref{diag1} on $A_\bullet$}\\
            &=\tau_{l,m+n}(A_l\otimes\tau_{m,n})(f\otimes g\otimes h)\circ(C\otimes \Delta)\circ \Delta\tag{by coassociativity of $\Delta$}\\
            &=\left([\Delta,\tau]_{l,m+n}\circ\Big(  [C, A]_l\otimes[\Delta,\tau]_{m,n}\Big)\right)(f\otimes g\otimes h).
        \end{align*}
        This proves the result.
    \end{proof}
    \smallskip

     \begin{lem}
       Let $C$ be a cocommutative coalgebra. Then, the  assignment
        $
             A_\bullet\longmapsto[C,A]_\bullet$ determines a functor $[C,-]:\mathscr Mult_k\longrightarrow \mathscr Mult_k$.
     \end{lem}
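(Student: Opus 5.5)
On objects the assignment is given by Lemma \ref{lem3.3}, which shows that $([C,A]_\bullet,[\Delta,\tau])$ is a multiplicative sequence. It remains to define $[C,-]$ on morphisms and to check functoriality. Given a morphism $f_\bullet:(A_\bullet,\tau)\longrightarrow (A'_\bullet,\tau')$ in $\mathscr Mult_k$, I set $[C,f]_0:=id_k$ and
\begin{equation*}
[C,f]_i: Hom_k(C,A_i)\longrightarrow Hom_k(C,A'_i)\qquad g\longmapsto f_i\circ g\qquad i\geq 1,
\end{equation*}
that is, postcomposition with $f_i$.

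The first step is to check that each $[C,f]_i$ is a morphism of algebras for the convolution product \eqref{con3.2}. Since $f_i$ is an algebra map,
\begin{equation*}
f_i\circ\mu_{A_i}\circ(g\otimes h)\circ\Delta=\mu_{A'_i}\circ(f_i g\otimes f_i h)\circ\Delta .
\end{equation*}
The unit $u_{A_i}\circ\epsilon$ is sent to $f_i\circ u_{A_i}\circ\epsilon=u_{A'_i}\circ\epsilon$. For $i=0$ there is nothing to check.

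The second step is compatibility with the structure maps, i.e.\ the square \eqref{diag2} for $[C,f]_\bullet$. If $m=0$ or $n=0$, both routes around the square reduce to $[C,f]_n$ (respectively $[C,f]_m$), because $[\Delta,\tau]_{0,l}=id=[\Delta,\tau]_{l,0}$, and likewise on the primed side. For $m,n\geq 1$, and $g\in[C,A]_m$, $h\in[C,A]_n$, the square \eqref{diag2} for $f_\bullet$ gives
\begin{equation*}
f_{m+n}\circ\tau_{m,n}\circ(g\otimes h)\circ\Delta=\tau'_{m,n}\circ(f_m\otimes f_n)\circ(g\otimes h)\circ\Delta=\tau'_{m,n}\circ(f_mg\otimes f_nh)\circ\Delta .
\end{equation*}
This says exactly that $[C,f]_{m+n}\circ[\Delta,\tau]_{m,n}=[\Delta,\tau']_{m,n}\circ([C,f]_m\otimes[C,f]_n)$.

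Finally, functoriality is immediate. Postcomposition with the identity is the identity, and postcomposition respects composition, $(f'_i\circ f_i)\circ g=f'_i\circ(f_i\circ g)$. Degree $0$ is trivial in both cases. There is no real obstacle in this argument; the only point needing care is treating the degree-$0$ conventions separately from the cases $m,n\geq 1$.
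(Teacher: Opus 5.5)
Your proposal is correct and follows the paper's approach: you define $[C,f]_i$ as postcomposition with $f_i$, and the key step is verifying the square \eqref{diag2} for $m,n\geq 1$ using the corresponding square for $f_\bullet$. You also spell out four checks the paper leaves implicit: that each $[C,f]_i$ is an algebra map, the degree-zero cases, preservation of identities, and preservation of composition.
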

     \begin{proof} Let $f_\bullet:(A_\bullet,\tau)\longrightarrow (A'_\bullet,\tau')$ be a morphism of multiplicative sequences in the sense of Definition \ref{def2.1}. It is clear that the induced maps \begin{equation}\label{eq3.9.} [C,f]_i:[C,A]_i=Hom_k(C,A_i)\longrightarrow Hom_k(C,A'_i)=[C,A']_i \qquad i>0
     \end{equation} together with $[C,f]_0:=id:[C,A]_0=k\longrightarrow k=[C,A']_0$ are morphisms of algebras. To show that $[C,f]:=\{[C,f]_i\}_{\geq 0} $ is a morphism in $\mathscr Mult_k$, we must show that it satisfies the compatibility condition in \eqref{diag2}. In other words, for all $i$, $j\geq 0$, we must verify that
        \begin{equation}
            [C,f]_{i+j}\circ [\Delta,\tau]_{i,j}=[\Delta,\tau']_{i,j}\circ \left([C,f]_i\otimes [C,f]_j\right).
        \end{equation}
     For this, we take  $s\in[C,A]_i$ and $t\in[C,A]_j$. Then, for all $i,j\geq 1$, we have 
       \begin{align*}
           \left([C,f]_{i+j}\circ [\Delta,\tau]_{i,j}\right)(s\otimes t)
           &=[C,f]_{i+j}( \tau_{i,j}\circ(s\otimes t)\circ \Delta)\tag{by \eqref{3.8yg}}\\
           &=f_{i+j}\circ \tau_{i,j}\circ (s\otimes t)\circ\Delta\tag{by \eqref{eq3.9.}}\\
           &=\tau'_{i,j}\circ(f_i\otimes f_j)\circ (s\otimes t)\circ\Delta\tag{as $f_\bullet$ is a morphism in $\mathscr Mult_k$}\\
           &=\tau'_{i,j}\circ\left((f_i\circ s)\otimes(f_j\circ t)\right)\circ \Delta\\
           &=\tau'_{i,j}\circ\left([C,f]_i(s)\otimes[C,f]_j(t)\right)\circ \Delta\\
           &=[\Delta,\tau']_{i,j}\left([C,f]_i(s)\otimes[C,f]_j(t)\right)
           =\left([\Delta,\tau']_{i,j}\circ \left([C,f]_i\otimes [C,f]_j\right)\right)(s\otimes t)
       \end{align*}
       This completes the proof.
     \end{proof}
   \begin{thm}\label{prop3.3}
        Let $(A_\bullet,\tau)$, $(A'_\bullet,\tau')\in \mathscr Mult_k$. Let $(C,\Delta,\epsilon)$ be a cocommutative $k$-coalgebra. Suppose that we have a collection $\Phi=\left\{\Phi_i:C\longrightarrow  {Hom}_k(A_i,A_i')\right\}_{i\geq 0}$   of linear maps, with $\Phi_0(x)(1)=\epsilon(x)1$ for all $x\in C$. We set
        \begin{equation}\label{eq3.8.1}
    f_i:A_i\longrightarrow [C,A']_i=Hom_k(C,A_i)\qquad a^i\longmapsto f_i(a^i):=(x\mapsto \Phi_i(x)(a^i))\qquad \forall i> 0,
    \end{equation}
    as well as  $f_0=id:A_0=k\longrightarrow   k=[C,A']_0$. Then $(C,\Phi)$ is a coalgebra measuring from $(A_\bullet,\tau)$ to $(A'_\bullet,\tau')$ if and only if $f:=\left\{f_i\right\}_{i\geq0}: A_\bullet\longrightarrow  [C, A']_\bullet$ is a morphism in $ \mathscr Mult_k$. 
    \end{thm}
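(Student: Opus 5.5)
The argument reduces to matching the three conditions of Definition \ref{D3.r1} against the two requirements for $f=\{f_i\}_{i\geq 0}$ to be a morphism in $\mathscr Mult_k$: each $f_i$ must be a unital algebra map, and the squares \eqref{diag2} must commute with $\tau'$ replaced by $[\Delta,\tau']$ from Lemma \ref{lem3.3}. I will go level by level, with the indices $i=0$ and $m=0$ or $n=0$ handled separately.

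First, the individual levels. For $i>0$, I use the classical fact recalled before Lemma \ref{lem3.3}: $\Phi_i:C\longrightarrow Hom_k(A_i,A'_i)$ is a measuring if and only if $a\mapsto(x\mapsto\Phi_i(x)(a))$ is an algebra map $A_i\longrightarrow Hom_k(C,A'_i)$ for the convolution product \eqref{con3.2}. This follows by evaluating both sides at $x$. Multiplicativity of $f_i$ reads $\Phi_i(x)(a_1a_2)=\Phi_i(x_{(1)})(a_1)\Phi_i(x_{(2)})(a_2)$, and unitality reads $\Phi_i(x)(1)=\epsilon(x)1$, since the convolution unit is $u\circ\epsilon$.

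For $i=0$ the two sides do not match literally, because $f_0=id:k\to k$ while $\Phi_0$ is valued in $Hom_k(k,k)$. The hypothesis $\Phi_0(x)(1)=\epsilon(x)1$ together with linearity forces $\Phi_0(x)(s)=\epsilon(x)s$. So condition (1) holds automatically, and $(C,\Phi_0)$ is automatically a measuring from $k$ to $k$. Thus $f_0$ being an algebra map and conditions (1),(2) at level $0$ are both vacuous.

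Second, the compatibility squares. For $m,n\geq 1$, I evaluate $f_{m+n}(\tau_{m,n}(a^m\otimes a^n))$ and $[\Delta,\tau']_{m,n}(f_m(a^m)\otimes f_n(a^n))$ at $x\in C$. Using \eqref{3.8yg}, these are respectively $\Phi_{m+n}(x)(\tau_{m,n}(a^m\otimes a^n))$ and $\sum\tau'_{m,n}\big(\Phi_m(x_{(1)})(a^m)\otimes\Phi_n(x_{(2)})(a^n)\big)$. Their equality for all $x$ is exactly \eqref{diag4prm}, and since the elements $a^m\otimes a^n$ span $A_m\otimes A_n$, this is the square \eqref{diag4}.

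The main obstacle is the boundary case $m=0$ (and symmetrically $n=0$), where the two conditions are not literally the same. On the morphism side, $[\Delta,\tau']_{0,n}=id$ and $\tau_{0,n}=id$, so \eqref{diag2} holds trivially because $f_0=id$. On the measuring side, \eqref{diag4} with $m=0$ reads $\Phi_n(x)(s a^n)=\sum\epsilon(x_{(1)})s\,\Phi_n(x_{(2)})(a^n)$. By the counit axiom and linearity of $\Phi_n(x)$, this is also automatic, using $\Phi_0(x)(s)=\epsilon(x)s$ from the first step. The case $n=0$ is the same argument with the counit on the other side.

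Putting the two steps together, both conditions reduce to the same collection of identities, which proves the equivalence in both directions.
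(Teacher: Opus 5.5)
Your proposal is correct and follows essentially the same route as the paper. Levelwise, $f_i$ is an algebra map exactly when $(C,\Phi_i)$ is a measuring, and the compatibility squares \eqref{diag2} (for $[\Delta,\tau']$) and \eqref{diag4} are matched by evaluating both composites at $x\in C$. Your explicit handling of level $0$ and of the cases $m=0$ or $n=0$, using $\Phi_0(x)(s)=\epsilon(x)s$ and the counit axiom, fills in details the paper leaves implicit.
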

    \begin{proof}
   For $i>0$, we know that $f_i:A_i\longrightarrow [C,A']_i=Hom_k(C,A'_i)$ is a morphism of algebras if and only if $(C,\Phi_i)$ is a coalgebra measuring from $A_i$ to $A_i'$. Therefore, to complete the proof, it suffices to show that the diagram on the left hand side below commutes if and only if the diagram on the right hand side commutes for each $x\in C$.
        \begin{equation}\label{3.12ert}
            \begin{tikzcd}[row sep=3em,column sep=5em]
   A_m \otimes A_n\arrow[r, "f_m \otimes f_{n}"]\arrow[d, "\tau_{m,n}"']
   & {[C,A']_{m}} \otimes  {[C,A']_n}
  \arrow[d,  "{[\Delta,\tau']_{m,n}}",right]\\
  A_{m+n}\arrow[r, "f_{m+n}"']
  & {[C,A']}_{m+n}
         \end{tikzcd}
        \qquad \textit{$\Leftrightarrow$}\qquad
    \begin{tikzcd}[ row sep=3em,column sep=7em]
A_m \otimes A_n
  \arrow[r, "\Phi_m(x_{(1)}) \otimes \Phi_{n}(x_{(2)})"]
  \arrow[d, "\tau_{m,n}"']
&
A'_{m} \otimes A'_n
  \arrow[d, "\tau'_{m,n}"]
\\
A_{m+n}
  \arrow[r, "\Phi_{m+n}(x)"']
&
A'_{m+n}.
\end{tikzcd}
\end{equation}
For any $a^m\in A_m,a^n\in A_n,$ and $x\in C$, we have
\begin{align*}
    [\Delta,\tau']_{m,n} (f_m(a^m)\otimes f_n(a^n))(x) &= \tau'_{m,n}(f_m(a^m)(x_{(1)}) \otimes f_n(a^n)(x_{(2)}))\tag{by (\ref{3.8yg})}\\
    &=\tau'_{m,n}  \left(\Phi_m(x_{(1)})(a^m)\otimes\Phi_n(x_{(2)})(a^n)\right).\tag{by \eqref{eq3.8.1}}
\end{align*} By \eqref{eq3.8.1}, we also have
\begin{equation}
f_{m+n}(\tau_{m,n}(a^m\otimes a^n))(x)=\Phi_{m+n}(x)(\tau_{m,n}(a^m\otimes a^n))
\end{equation} From \eqref{3.12ert}, the result is now clear. 
 
    \end{proof}

\smallskip

Given a $k$-coalgebra $C$, it was shown by Anel and Joyal (see \cite[Proposition 3.4.3]{AJ}) that the functor that associates a $k$-algebra $A$ to the algebra 
$Hom_k(C,A)$ has a left adjoint, known as the Sweedler product. Our next aim in this section is to find a full subcategory of $\mathscr Mult_k$ such that the restriction of the functor $[C,-]:\mathscr Mult_k\longrightarrow \mathscr Mult_k$ has a left adjoint.

\smallskip

Accordingly, we define a category $\mathcal Comm_k^{\geq 0}$, where an object $(B_\bullet,\xi)\in \mathcal Comm_k^{\geq 0}$ consists of 
\begin{enumerate}[(a)]
    \item a collection $B_\bullet=\{B_i\}_{i\geq 0}$ of commutative $k$-algebras, with $B_0=k$, and
    \item a collection $\xi=\{\xi_{i,j}:B_i\longrightarrow B_j\}_{0\leq i\leq j}$ of $k$-algebra homomorphisms such that the following conditions hold:
    \begin{enumerate} 
        \item[(i)] for each $i\geq 0$, we have $\xi_{i,i}=id:B_i\longrightarrow B_i$, and
        \item[(ii)] for  $0\leq i\leq j\leq k$ we have $\xi_{j,k}\circ \xi_{i,j}=\xi_{i,k}$.
    \end{enumerate}
\end{enumerate}

A morphism $f_\bullet:(B_\bullet,\xi)\longrightarrow (B_\bullet',\xi')$ in $\mathcal Comm_k^{\geq 0}$  consists of a collection $\left\{f_i:B_i\longrightarrow B_i'\right\}_{i\geq 0}$ of $k$-algebra homomorphisms  such that $\xi'_{i,j}\circ f_i=f_j\circ \xi_{i,j}$ for all $0\leq i\leq j$. An object $(B_\bullet,\xi)\in \mathcal Comm_k^{\geq 0}$ determines a multiplicative sequence
$B_\bullet=\{B_i\}_{i\geq 0}$  of algebras given by
\begin{equation}\label{pi3r}
    \tau_{i,j}:B_i\otimes B_j\longrightarrow B_{i+j}\qquad (b^i\otimes b^j)\longmapsto \xi_{i,i+j}(b^i)\xi_{j,i+j}(b^j)
\end{equation} As such, $\mathcal Comm_k^{\geq 0}$ may be treated as a full subcategory of $\mathscr Mult_k$.

 \smallskip  
Now let $(C,\Delta,\epsilon)$ be a cocommutative $k$-coalgebra and let 
$A_\bullet\in \mathscr Mult_k$.  We set $(C\Box A)_0:=k$, and for each $n\geq1$, we let  $(C\Box A)_n$ be the commutative $k$-algebra  generated by the symbols 
\begin{equation} 
\{\mbox{$x\Box a^i$ $\vert$  $x\in C$, $a^i\in A_i$, $1\leq i\leq n$}\}
\end{equation} subject to the following relations
\begin{enumerate}[(a)]
        \item for each $1\leq i\leq n$,  the canonical map $C\times A_i\longrightarrow (C\Box A)_n$ defined by $(x,a^i)\mapsto x\Box a^i$  is $k$-bilinear,
        \item for each $x\in C$, the following identities hold 
        \begin{enumerate}[(1)] 
            \item for $1\leq i\leq n$ and $a_1^i$, $a_2^i\in A_i$, we have $x\Box a^i_1a^i_2=(x_{(1)}\Box a^i_1)(x_{(2)}\Box a^i_2)$, and
            \item for $a^i\in A_i$, $a^j\in A_j$ and $i+j\leq n$, we have $x\Box (\tau_{i,j}(a^i\otimes a^j))=(x_{(1)}\Box a^i)(x_{(2)}\Box a^j)$
        \end{enumerate}
        \item  for all $1\leq i\leq n$ and $x\in C$, we have $x\Box 1_{A_i}=\epsilon(x)$.
    \end{enumerate}

    \smallskip
    \begin{lem}\label{lem3.4}
        Let $(C,\Delta,\epsilon)$ be a cocommutative $k$-coalgebra and $(A_\bullet,\tau) \in \mathscr Mult_k$.

        \medskip
        \noindent
        (a) The collection $(C\Box A)_\bullet=\left\{(C\Box A)_n\right\}_{n\geq 0}$ along with maps $\xi=\{\xi_{i,j}\}_{0\leq i\leq j}$ determined by  
          \begin{equation}\label{316dp}
              \xi_{i,j}:(C\Box A)_i\longrightarrow (C\Box A)_j\qquad x\Box a^t\longmapsto x\Box a^t
          \end{equation}
        for   $x\in C,a^t\in A_t$ with $1\leq t\leq i$, gives an object of $ \mathcal Comm^{\geq 0}_k$. Further, the assignment $A_\bullet\longmapsto (C\Box A)_\bullet$ determines a functor $C\Box-:\mathscr Mult_k\longrightarrow \mathcal Comm^{\geq 0}_k$.
        
        \medskip
        \noindent
        (b) There is a  coalgebra measuring $(C,\Upsilon=\{\Upsilon_n\}_{n\geq 0})$ from $A_\bullet$ to $(C\Box A)_\bullet$, where $\Upsilon_n$ is determined by 
        \begin{equation}
        \begin{array}{c}
        \Upsilon_0:C\otimes A_0=C\otimes k\longrightarrow k=(C\Box A)_0\qquad x\otimes s\mapsto \epsilon(x)s \\
          \Upsilon_n: C\otimes A_n\longrightarrow (C\Box A)_n\qquad x\otimes a^n\mapsto x\Box a^n \qquad\forall n\geq 1
          \end{array}
        \end{equation}   where $(C\Box A)_\bullet\in \mathcal Comm_k^{\geq 0}$ is treated as an object of $\mathscr Mult_k$.
    \end{lem}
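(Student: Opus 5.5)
The plan is to check everything on generators, since each $(C\Box A)_n$ is presented as a commutative algebra by generators and relations.

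\textbf{Well-definedness of $\xi_{i,j}$.} For part (a), I first show that $\xi_{i,j}$ is a well-defined algebra homomorphism. The relations defining $(C\Box A)_i$ are bilinearity in $(x,a^t)$ for $t\leq i$, the two multiplicativity relations (b)(1) and (b)(2) with $t\leq i$ or $s+t\leq i$, and the unit relation (c). Every one of these is also among the relations defining $(C\Box A)_j$ when $i\leq j$. So the assignment $x\Box a^t\mapsto x\Box a^t$ on generators extends to a homomorphism of commutative algebras $(C\Box A)_i\to(C\Box A)_j$, by the universal property of the presentation. For $i=0$, $\xi_{0,j}$ is the unit map $k\to (C\Box A)_j$. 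The identities $\xi_{i,i}=id$ and $\xi_{j,l}\circ\xi_{i,j}=\xi_{i,l}$ hold because both sides agree on generators. This gives an object of $\mathcal Comm^{\geq 0}_k$.

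\textbf{Functoriality of $C\Box-$.} For a morphism $f_\bullet:A_\bullet\to A'_\bullet$ in $\mathscr Mult_k$, I define $(C\Box f)_n(x\Box a^t):=x\Box f_t(a^t)$. To see this respects the relations:
\begin{itemize}
\item bilinearity is clear;
\item relation (b)(1) is preserved because each $f_t$ is multiplicative;
\item relation (b)(2) is preserved because $f_{s+t}\circ\tau_{s,t}=\tau'_{s,t}\circ(f_s\otimes f_t)$ by \eqref{diag2};
\item relation (c) is preserved because $f_t$ is unital.
\end{itemize}
Compatibility with the maps $\xi$, and preservation of identities and composition, are immediate on generators.

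\textbf{Part (b).} I verify the conditions of Definition \ref{D3.r1}. Condition (1) is built into $\Upsilon_0$. Condition (2), that each $\Upsilon_n$ is a measuring, is exactly relations (b)(1) and (c) with $i=n$. The main work is condition (3), namely \eqref{diag4prm} for the multiplicative structure $\tau_{m,n}$ on $(C\Box A)_\bullet$ given by \eqref{pi3r}. For $m,n\geq1$ the right-hand side is
\[
\xi_{m,m+n}(x_{(1)}\Box a^m)\,\xi_{n,m+n}(x_{(2)}\Box a^n)=(x_{(1)}\Box a^m)(x_{(2)}\Box a^n)\in (C\Box A)_{m+n}.
\]
This equals $x\Box\tau_{m,n}(a^m\otimes a^n)$ by relation (b)(2), applied with $i=m$, $j=n$ and $i+j=m+n$.

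\textbf{Boundary cases.} When $m=0$, the map $\tau_{0,n}$ is the identity and $a^0=s\in k$. The right-hand side becomes $\epsilon(x_{(1)})s\,(x_{(2)}\Box a^n)=x\Box sa^n$ by counitality and bilinearity, which matches the left-hand side. The case $n=0$ is symmetric.

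The only real subtlety I expect is making sure that relation (b)(2) is imposed in $(C\Box A)_{m+n}$ for all $i+j\leq n$, rather than only for $i+j=n$. This is what makes $\xi_{i,j}$ well defined and lets the measuring identity be checked at level $m+n$. Cocommutativity of $C$ is not needed beyond what the definitions already require.
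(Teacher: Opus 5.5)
Your proposal is correct and follows essentially the same route as the paper. In (a), $\xi_{i,j}$ is well defined because the defining relations of $(C\Box A)_i$ persist in $(C\Box A)_j$; in (b), the measuring and compatibility conditions are read off directly from relations (b)(1), (b)(2) and (c) of $(C\Box A)_n$. You are more thorough than the paper, which leaves functoriality of $C\Box-$ and the cases $m=0$ or $n=0$ as ``clear'', while you check both explicitly.
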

\begin{proof}
        (a) By construction, for $i\leq j$, we note that the relations between the symbols defining $(C\Box A)_i$ continue to hold in $(C\Box A)_j$.   It follows that the assignment in \eqref{316dp} determines a $k$-algebra homomorphism $\xi_{i,j}:(C\Box A)_i\longrightarrow (C\Box A)_j$. The result is now clear.

       \medskip
      (b) We note that  $(C\Box A)_\bullet\in \mathcal Comm_k^{\geq 0}$ is treated as an object of $\mathscr Mult_k$ equipped with  structure maps $(\Delta\Box\tau)=\{(\Delta\Box\tau)_{m,n}\}_{m,n\geq 0}$, where  
        \begin{equation}\label{eq3.18.}
            (\Delta\Box\tau)_{m,n}:(C\Box A)_m\otimes (C\Box A)_n\longrightarrow (C\Box A)_{m+n}\qquad
            (x\Box a^i)\otimes(y\Box a^j)\longmapsto (x\Box a^i)(y\Box a^j)  
        \end{equation}
         for $m,n\geq 1$, $1\leq i\leq m,1\leq j\leq n$. First, we verify that $(C,\Upsilon_n)$ is a measuring from $A_n$ to $(C\Box A)_n$ for each $n\geq 0$. For $n=0$, this is clear. We consider $n\geq1$ and  choose $a^n_1,a^n_2\in A_n$, $x\in C$. By the relations defining $(C\Box A)_n$, we see that 
        $$ \Upsilon_n(x\otimes a^n_1a^n_2)
            =(x\Box a^n_1a^n_2)
            =(x_{(1)}\Box a^n_1)(x_{(2)}\Box a^n_2)
            =\Upsilon_n(x_{(1)}\otimes a^n_1)\Upsilon_n(x_{(2)}\otimes a^n_2)\qquad \qquad\Upsilon_n(x\otimes 1)=x\Box 1=\epsilon(x)1.$$
            
       It remains to show that $\Upsilon$ satisfies the compatibility condition in \eqref{diag4}. The result is clear for the case $m=0$ or $n=0$. For $a^m\in A_m,a^n\in A_n$ with $m,n\geq 1$ and $x\in C$, we have
        \begin{align*}
          \Upsilon_{m+n}\left(x\otimes (\tau_{m,n}(a^m\otimes a^n))\right)
          =x\Box (\tau_{m,n}(a^m\otimes a^n))
          =(x_{(1)}\Box a^m)(x_{(2)}\Box a^n)
          &=(\Delta\Box\tau)_{m,n}((x_{(1)}\Box a^m)\otimes(x_{(2)}\Box a^n))\\
          &=(\Delta\Box\tau)_{m,n}\Big(\Upsilon_m(x_{(1)}\otimes a^m)\otimes\Upsilon_n(x_{(2)}\otimes a^n)\Big).
        \end{align*}
        This completes the proof.
    \end{proof}

    \begin{thm}\label{prop3.6}
        Let $(C,\Delta,\epsilon)$ be a cocommutative $k$-coalgebra and $A_\bullet\in \mathscr Mult_k$. Then, the coalgebra measuring $(C,\Upsilon=\left\{\Upsilon_n: C\otimes A_n\longrightarrow (C\Box A)_n\right\}_{n\geq 0})$  has the following universal property: for any $B_\bullet\in \mathcal Comm_k^{\geq 0}$ and any coalgebra measuring $(C,\Phi=\left\{\Phi_n: C\otimes A_n\longrightarrow B_n\right\}_{n\geq 0})$ from $A_\bullet$ to $B_\bullet$, there is a unique morphism $g_\bullet=\left\{g_n:(C\Box A)_n \longrightarrow B_n\right\}_{n\geq 0}$ in $\mathcal Comm_k^{\geq 0}$ such that the following diagram 
      \begin{equation}\label{dig319c}
      \begin{array}{c}
      \begin{tikzpicture}[>=stealth]
         \node (A) at (0,0) {$C\otimes A_n$};
         \node (B) at (1.3,1.3) {$B_n$};
         \node (C) at (2.6,0) {$(C \Box A)_n$};
         \draw[->] (A) -- node[above left] {$\Phi_n$} (B);
         \draw[->] (A) -- node[below left] {$\qquad\Upsilon_n$} (C);
         \draw[->, dashed] (C) -- node[right] {$g_n$} (B);
      \end{tikzpicture}
      \end{array}
     \end{equation}
     commutes for each $n\geq 0$. 
    \end{thm}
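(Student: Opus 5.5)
Since $(C\Box A)_n$ is presented by generators and relations, the plan is to define $g_n$ on generators, check that the relations hold in $B_n$, and then read off compatibility and uniqueness. For $n=0$ we are forced to take $g_0=id_k$, and \eqref{dig319c} commutes because both $\Upsilon_0$ and $\Phi_0$ send $x\otimes s$ to $\epsilon(x)s$. For $n\geq 1$ we would like to send a generator $x\Box a^i$ with $1\leq i\leq n$ to $\Phi_i(x)(a^i)\in B_i$. To land in $B_n$ we push forward along the transition map, defining
\[
g_n(x\Box a^i):=\xi^B_{i,n}\big(\Phi_i(x\otimes a^i)\big)\in B_n,
\]
where $\xi^B$ denotes the structure maps of $B_\bullet\in\mathcal Comm_k^{\geq 0}$. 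Since $B_n$ is commutative, this assignment extends to an algebra map from the free commutative algebra on the symbols, provided it respects the relations (a)--(c).

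Most of the relations are quick to check. Relation (a), bilinearity, holds because $\Phi_i$ and $\xi^B_{i,n}$ are linear. Relation (c) follows from $\Phi_i(x)(1)=\epsilon(x)1$ together with the fact that $\xi^B_{i,n}$ is unital. Relation (b)(1) holds because $(C,\Phi_i)$ is a measuring and $\xi^B_{i,n}$ is multiplicative. The real point is relation (b)(2). Here we apply the compatibility \eqref{diag4prm} for $\Phi$ in the multiplicative structure \eqref{pi3r} on $B_\bullet$:
\[
\Phi_{i+j}\big(x\otimes\tau_{i,j}(a^i\otimes a^j)\big)=\xi^B_{i,i+j}\Phi_i(x_{(1)}\otimes a^i)\cdot\xi^B_{j,i+j}\Phi_j(x_{(2)}\otimes a^j).
\]
Applying $\xi^B_{i+j,n}$ and using the transitivity $\xi^B_{i+j,n}\circ\xi^B_{i,i+j}=\xi^B_{i,n}$ turns this into $g_n(x_{(1)}\Box a^i)\,g_n(x_{(2)}\Box a^j)$, which is exactly what (b)(2) requires. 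The boundary cases where $i$ or $j$ is $0$ are excluded by the definition of the generators, or can be handled through the unit relations.

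Next we check that $g_\bullet$ is a morphism in $\mathcal Comm_k^{\geq 0}$, namely that $\xi^B_{n,m}\circ g_n=g_m\circ\xi_{n,m}$. It suffices to test this on generators, where it reduces to $\xi^B_{n,m}\xi^B_{i,n}=\xi^B_{i,m}$. The triangle \eqref{dig319c} commutes by construction, since $g_n(\Upsilon_n(x\otimes a^n))=\xi^B_{n,n}\Phi_n(x\otimes a^n)=\Phi_n(x\otimes a^n)$.

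For uniqueness, suppose $g'_\bullet$ is another such morphism. The triangle forces $g'_i(x\Box a^i)=\Phi_i(x\otimes a^i)$ in $B_i$. Compatibility with the transition maps then forces $g'_n(x\Box a^i)=g'_n\xi_{i,n}(x\Box a^i)=\xi^B_{i,n}g'_i(x\Box a^i)=g_n(x\Box a^i)$. Since the symbols generate $(C\Box A)_n$ as an algebra, $g'_n=g_n$.

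I expect the main obstacle to be the verification of relation (b)(2): one has to be careful that the multiplicative structure on $B_\bullet$ viewed in $\mathscr Mult_k$ is the one given by \eqref{pi3r}, so that the measuring condition produces the transition maps, and that transitivity of $\xi^B$ then lets both sides land consistently in $B_n$.
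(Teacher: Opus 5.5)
Your proof is correct and follows the paper's argument: by \eqref{pi3r}, your $g_n(x\Box a^i)=\xi^B_{i,n}\big(\Phi_i(x\otimes a^i)\big)$ is exactly the paper's $\tau^B_{i,n-i}\big(\Phi_i(x\otimes a^i)\otimes 1\big)$, and like the paper you then check the defining relations, compatibility with the transition maps, the triangle, and uniqueness. Working directly with the transitivity of $\xi^B$ makes your check of relation (b)(2) shorter than the paper's route through associativity of $\tau^B$ and commutativity of $B_n$, and your uniqueness argument spells out what the paper simply calls ``clear''.
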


    \begin{proof} 
   We let  $\tau^A=\left\{\tau^A_{m,n}: A_m\otimes A_n\longrightarrow A_{m+n}\right\}_{m,n\geq 0}$ be the structure maps of 
the object $A_\bullet\in  \mathscr Mult_k$. Similarly, we   let $\tau^B=\left\{\tau^B_{m,n}: B_m\otimes B_n\longrightarrow B_{m+n}\right\}_{m,n\geq 0}$ be the structure maps of  $B_\bullet\in  \mathcal Comm_k^{\geq 0}$ when regarded as an object of $\mathscr Mult_k$ as explained in \eqref{pi3r}. Now suppose that we have a coalgebra measuring $\left(C,\Phi=\left\{\Phi_n:C\otimes A_n\longrightarrow B_n\right\}_{n\geq 0}\right)$ from $A_\bullet$ to $B_\bullet$.
    For $n=0$, we set $g_0=id:(C\Box A)_0=k\longrightarrow k=B_0$, and for $n\geq 1$, we set
    \begin{equation}\label{eq3.17yu}
        \begin{array}{c}
             g_n:(C\Box A)_n\longrightarrow B_n \\
              x\Box a^t\longmapsto \tau^B_{t,n-t}(\Phi_t(x\otimes a^t)\otimes 1)=\tau^B_{n-t,t}(1\otimes \Phi_t(x\otimes a^t))
        \end{array}
    \end{equation}
    where $x\in C$ and $a^t\in A_t$ for $1\leq t\leq n$, where  the equality in \eqref{eq3.17yu} follows from the commutativity of $B_n$. Further since $\Phi$ is a measuring, we see that
    \begin{equation}\label{eq3.20p}
          \tau^B_{t,n-t}(\Phi_t(x\otimes a^t)\otimes1)=\tau^B_{t,n-t}(\Phi_{t}(x_{(1)}\otimes a^t)\otimes \Phi_{n-t}(x_{(2)}\otimes 1))=\Phi_n(x\otimes\tau^A_{t,n-t}( a^t\otimes 1)) .
    \end{equation}
    Similarly, we can also check that $ \tau^B_{n-t,t}(1\otimes \Phi_t(x\otimes a^t))=\Phi_n(x\otimes\tau^A_{n-t,t}(1\otimes a^t))$. We will now show that $g=\{g_n\}_{n\geq 0}$ is a morphism in $\mathcal Comm_k^{\geq 0}$ from $(C\Box A)_\bullet$ to $B_\bullet$. 

\smallskip 
We first verify that $g_n$ is well-defined. This is clear for $n=0$. For $n\geq 1$, we take $a_1^i,a_2^i\in A_i$, where $1\leq i\leq n$. Using the fact that $\Phi_n$ is a measuring, we have 
            \begin{align*}
             g_n(x\Box a_1^ia_2^i)=\tau^B_{i,n-i}\left(\Phi_i(x\otimes a_1^ia_2^i)\otimes 1\right)
            &=\Phi_n\left(x\otimes \tau^A_{i,n-i}( a_1^ia_2^i\otimes 1)\right)\tag{by \eqref{eq3.20p}}\\
           & =\Phi_n\left(x\otimes \left(\tau^A_{i,n-i}( a_1^i\otimes 1)\right)\left(\tau^A_{i,n-i}( a_2^i\otimes 1)\right)\right)\\
            &=\Phi_n\left(x_{(1)}\otimes \tau^A_{i,n-i}( a_1^i\otimes 1)\right)\Phi_n\left(x_{(2)}\otimes \tau^A_{i,n-i}( a_2^i\otimes 1)\right)\\
            &=g_n(x_{(1)}\Box a^i_1)g_n(x_{(2)}\Box a^i_2)
            \end{align*}
        For  $a^i\in A_i$ and $a^j\in A_j$ where $ i,j\geq 1 $ and $i+j=t \leq n$, we have

\begin{align*}
    g_n\left(x\Box \tau^A_{i,j}(a^i\otimes a^j)\right)
            &=\tau_{t,n-t}^B\left(\Phi_{i+j}\left(x\otimes \tau_{i,j}^A(a^i\otimes a^j)\right)\otimes 1\right)\tag{putting $i+j=t$}\\
            &=\tau_{t,n-t}^B\left(\tau_{i,j}^B\left(\Phi_{i}(x_{(1)}\otimes a^i) \otimes \Phi_j(x_{(2)}\otimes a^j)\right)\otimes 1\right)\tag{as $\Phi$ is a measuring}\\
            &=\tau_{t,n-t}^B\left(\tau_{i,j}^B\left(\Phi_{i}(x_{(1)}\otimes a^i) \otimes 1\right) \tau_{i,j}^B\left(1 \otimes \Phi_j(x_{(2)}\otimes a^j)\right)\otimes 1\right)\tag{as $\tau^B_{i,j}$ is an algebra morphism}\\
            &=\tau_{t,n-t}^B\left(\tau_{i,j}^B\left(\Phi_{i}(x_{(1)}\otimes a^i) \otimes 1\right)\otimes 1\right)\tau_{t,n-t}^B\left( \tau_{i,j}^B\left(1 \otimes \Phi_j(x_{(2)}\otimes a^j)\right)\otimes 1\right)\tag{as $\tau^B_{t,n-t}$ is an algebra morphism}
            \end{align*}
            \begin{align*}
            &=\tau_{i,j+n-t}^B\left(\Phi_{i}(x_{(1)}\otimes a^i) \otimes \tau_{j,n-t}^B (1\otimes 1)\right)\tau_{j+i,n-t}^B\left( \tau_{j,i}^B( \Phi_j(x_{(2)}\otimes a^j)\otimes 1)\otimes 1\right)\tag{by \eqref{diag1} and commutativity of $B_{i+j}$}\\
            &=g_n(x_{(1)}\Box a^i)\tau_{j,i+n-t}^B\left( \Phi_j(x_{(2)}\otimes a^j)\otimes\tau_{i,n-t}^B( 1\otimes 1)\right)\tag{by \eqref{diag1}}\\
            &=g_n(x_{(1)}\Box a^i)g_n(x_{(2)}\Box a^j)
\end{align*} From \eqref{eq3.17yu}, we also see  that $g_n(x\Box 1_{A_i})=\epsilon(x)$ for $i\leq n$. This shows that each $g_n:(C\Box A)_n\longrightarrow B_n$ is a   morphism of algebras. From \eqref{eq3.17yu}, it is also clear that the maps $g=\{g_n\}_{n\geq 0}$ are well behaved with respect to the structure maps 
$(C\Box A)_m\longrightarrow (C\Box A)_n$ and $B_m\longrightarrow B_n$ for $m\leq n$ defining $(C\Box A)_\bullet$ and $B_\bullet$ respectively as objects of $\mathcal Comm_k^{\geq 0}$. It follows that $g=\{g_n\}_{n\geq 0}:(C\Box A)_\bullet\longrightarrow B_\bullet$ is a morphism in $\mathcal Comm_k^{\geq 0}$. Using \eqref{eq3.17yu} and \eqref{eq3.20p}, we have (for $x\in C$ and $a^n\in A_n$)
\begin{equation}
g_n\Upsilon_n(x\otimes a^n)=g_n(x\Box a^n)=\Phi_n(x\otimes \tau^A_{n,0}(a^n\otimes 1))=\Phi_n(x\otimes a^n)
\end{equation} It follows that the diagram \eqref{dig319c} commutes. The uniqueness of $g=\{g_n\}_{n\geq 0}$ is clear. 
    \end{proof}

    We are now ready to construct a left adjoint to the functor $[C,-]:\mathscr{M}ult_k\longrightarrow \mathscr Mult_k$ restricted to the subcategory  $\mathcal Comm_k^{\geq 0}$, which we continute to denote by $[C,-] : \mathcal Comm_k^{\geq 0} \longrightarrow \mathscr{M}ult_k$.

    \smallskip
    \begin{Thm}\label{Thm3.9}
        Let $C$ be a cocommutative $k$-coalgebra. For  $A_\bullet\in \mathscr Mult_k$ and  $B_\bullet\in \mathcal Comm^{\geq 0}_k$, we have a natural isomorphism 
        \begin{equation}\label{3.23nk}
            \mathcal Comm_k^{\geq 0}\left((C\Box A)_\bullet,B_\bullet\right)\cong {\mathscr Mult_k}\left(A_\bullet, [C,B]_\bullet\right).
     \end{equation} In other words, $C\Box-:\mathscr Mult_k\longrightarrow \mathcal Comm_k^{\geq 0}$ is  left adjoint to the functor $ [C,-] : \mathcal Comm_k^{\geq 0} \longrightarrow \mathscr{M}ult_k$.
    \end{Thm}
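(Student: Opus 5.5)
The plan is to compose two bijections already available: Proposition \ref{prop3.3}, which identifies maps $A_\bullet\to[C,B]_\bullet$ in $\mathscr Mult_k$ with coalgebra measurings $(C,\Phi)$ from $A_\bullet$ to $B_\bullet$, and Proposition \ref{prop3.6}, which identifies such measurings (for $B_\bullet\in\mathcal Comm_k^{\geq 0}$) with morphisms $(C\Box A)_\bullet\to B_\bullet$ in $\mathcal Comm_k^{\geq 0}$. Here $B_\bullet$ is regarded as an object of $\mathscr Mult_k$ via \eqref{pi3r}, and $[C,B]_\bullet$ is the object of $\mathscr Mult_k$ given by Lemma \ref{lem3.3}.

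First, I would define the map from left to right in \eqref{3.23nk}. Given $g_\bullet:(C\Box A)_\bullet\to B_\bullet$, set $\Phi_n:=g_n\circ\Upsilon_n$, where $\Upsilon$ is the measuring of Lemma \ref{lem3.4}(b). Since $(C\Box A)_\bullet\in\mathcal Comm_k^{\geq 0}$ embeds fully in $\mathscr Mult_k$, $g_\bullet$ is a morphism in $\mathscr Mult_k$. I would then check directly that composing a measuring with a morphism of multiplicative sequences yields a measuring:
\begin{itemize}
\item each $g_n\circ\Upsilon_n$ measures, because $g_n$ is an algebra map;
\item diagram \eqref{diag4} is preserved, because $g_\bullet$ commutes with the $\tau$'s as in \eqref{diag2}.
\end{itemize}
Proposition \ref{prop3.3} then turns $(C,\Phi)$ into a morphism $f_\bullet:A_\bullet\to[C,B]_\bullet$.

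Conversely, given $f_\bullet:A_\bullet\to[C,B]_\bullet$, Proposition \ref{prop3.3} produces a measuring $(C,\Phi)$. The normalization $\Phi_0(x)(1)=\epsilon(x)$ is built in through $f_0=id$. Proposition \ref{prop3.6} then produces a unique $g_\bullet$ with $g_n\Upsilon_n=\Phi_n$. The uniqueness clause in Proposition \ref{prop3.6} shows that the two constructions are mutually inverse. In one direction, $g\mapsto g\Upsilon\mapsto g$ by uniqueness. In the other, the construction returns the same $\Phi$ by the commutativity of \eqref{dig319c}, and the correspondence of Proposition \ref{prop3.3} is itself a bijection.

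Finally, I would verify naturality in both variables.
\begin{itemize}
\item For $h_\bullet:B_\bullet\to B'_\bullet$ in $\mathcal Comm_k^{\geq 0}$, post-composing $g$ with $h$ corresponds to post-composing $\Phi$ with $h$, and hence to composing $f$ with $[C,h]$. This is immediate from \eqref{eq3.8.1}.
\item For $u_\bullet:A'_\bullet\to A_\bullet$ in $\mathscr Mult_k$, I would use $C\Box u$, defined on generators by $x\Box a\mapsto x\Box u(a)$. Then $g\circ(C\Box u)$ corresponds to $\Phi\circ(C\otimes u)$, and hence to $f\circ u$. Uniqueness in Proposition \ref{prop3.6} again reduces everything to checking on generators $x\Box a$.
\end{itemize}

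I expect the main obstacle to be bookkeeping rather than mathematics. The first point is that $g_n\circ\Upsilon_n$ must really be compatible with \eqref{diag4}. This needs the identification, via \eqref{eq3.18.} and \eqref{pi3r}, of the multiplicative structure on $(C\Box A)_\bullet$ with products in the commutative algebras. The second point is that the degree-zero and unit conventions match across both bijections.
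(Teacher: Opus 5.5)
Your proposal is correct and follows the paper's proof. The paper also gets the isomorphism by composing the bijection of Proposition \ref{prop3.3} (morphisms $A_\bullet\to[C,B]_\bullet$ versus measurings from $A_\bullet$ to $B_\bullet$) with that of Proposition \ref{prop3.6} (such measurings versus morphisms $(C\Box A)_\bullet\to B_\bullet$), and then writes out the two inverse maps. Your additional checks are points the paper leaves implicit, namely that $g_\bullet\circ\Upsilon$ is again a measuring, which relies on $(\Delta\Box\tau)$ agreeing with \eqref{pi3r}, and that the bijection is natural in both variables; both go through as you describe.
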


    \begin{proof} 
    
     From Proposition \ref{prop3.3}, we have a one one correspondence between coalgebra measurings $(C,\Phi)$ from $A_\bullet$ to $B_\bullet$ and morphisms from $A_\bullet$ to $[C,B]_\bullet$ in $\mathscr Mult_k$. On the other hand, Proposition \ref{prop3.6} gives a one one  correspondence between coalgebra measurings $(C,\Phi)$ from $A_\bullet$ to $B_\bullet$ and morphisms from $(C\Box A)_\bullet$ to $B_\bullet$ in $\mathcal Comm_k^{\geq 0}$.  Together, this proves the isomorphism in \eqref{3.23nk}. 
     
     \smallskip
      More explicitly, for any $f_\bullet=\left\{f_n:(C\Box A)_n\longrightarrow B_n\right\}_{n\geq 0}\in \mathcal Comm_k^{\geq 0}\left((C\Box A)_\bullet,B_\bullet\right)$ the corresponding morphism $f'_\bullet=\{f'_n\}_{n\geq 0}\in {\mathscr Mult_k}\left(A_\bullet, [C,B]_\bullet\right)$ is given by
     \begin{equation*}
     \begin{array}{c}
        f'_0=id:A_0=k\longrightarrow k=[C,B]_0\\
           f'_n:A_n\longrightarrow  [C,B]_n\qquad a^n\mapsto f'_n(a^n):=(x\longmapsto f_n(x\Box {a^n}))\qquad n\geq 1
     \end{array}     
     \end{equation*}
     for $a^n\in A_n$ and $x\in C$. 
    Conversely, if $g_\bullet=\{g_n\}_{n\geq 0}\in {\mathscr Mult_k}(A_\bullet, [C,B]_\bullet)$,  the corresponding morphism $g'_\bullet=\{g'_n\}_{n\geq 0}\in \mathcal Comm_k^{\geq 0}\left((C\Box A)_\bullet,B_\bullet\right)$ is  given by
     \begin{equation*}
         \begin{array}{c}
         g'_0=id:(C\Box A)_0=k\longrightarrow k=B_0 \\
            g'_n:(C\Box A)_n\longrightarrow B_n\qquad x\Box a^t\mapsto \tau^B_{t,n-t}(g_t(a^t)(x)\otimes 1)
         \end{array}
     \end{equation*}
     for $a^t\in A_t,x\in C$ with $1\leq t\leq n$, where $\left\{\tau^B_{m,n}:B_m\otimes B_n\longrightarrow B_{m+n}\right\}_{m,n\geq 0}$ are the structure maps on $B_\bullet$ when treated as an object of $\mathscr Mult_k$.  
    \end{proof}

     In Theorem \ref{Thm2.10}, we already showed how a derivation $d: A_\bullet\longrightarrow A_\bullet$ on an object $A_\bullet\in \mathscr{M}ult_k$ induces a derivation  on the graded algebra $Hoch(A_\bullet)$.  Our final aim in this section is to show something similar for coalgebra measurings between multiplicative sequences $A_\bullet,A'_\bullet\in \mathscr Mult_k$. We recall from \cite[Proposition 2.2]{AB} that a coalgebra measuring between algebras induces a morphism between their Hochschild chain complexes.

  \begin{lem}\label{prop3.4}
    Let $A_\bullet,A'_\bullet\in \mathscr{M}ult_k$ be multiplicative sequences of algebras. Let $(C,\Phi=\{\Phi_i\}_{i\geq 0})$ be a  coalgebra measuring from $A_\bullet$ to $A_\bullet'$, with $C$ cocommutative. Then for each $x\in C$, there is a morphism $Hoch^\Phi(x):Hoch(A_\bullet)\longrightarrow Hoch(A'_\bullet)$ in $\mathscr Gr(\mathcal C_{k})$, given by the collection    
    \begin{equation}\label{pi3}
    \begin{array}{c}
    Hoch^\Phi(x)=\{Hoch^\Phi_{i}(x):=C^{\Phi_i}_*(x):Hoch(A_\bullet)_i=C_*(A_i)\longrightarrow C_*(A'_i)=Hoch(A'_\bullet)_i\}_{i\geq 0}
    \\
       C^{\Phi_i}_n(x):C_n(A_i)=A_i^{\otimes n+1}\longrightarrow A'^{\otimes n+1}_i=C_n(A'_i) \quad(a^i_0,a^i_1,\ldots,a^i_n)\longmapsto \sum\left(\Phi_i(x_{(1)})(a^i_0),\Phi_i(x_{(2)})(a^i_1),\ldots, \Phi_i(x_{(n+1)})(a^i_n)\right) \\
        \end{array}
    \end{equation}
    for $(a^i_0,a^i_1,\ldots,a^i_n)\in C_n(A_i)$ and where  $\Delta^{n}(x)=\sum x_{(1)}\otimes\ldots\otimes x_{(n+1)}$.
 \end{lem}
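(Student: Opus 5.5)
The plan is to reduce the statement to the single-algebra case and then assemble the pieces degreewise. A morphism in $\mathscr Gr(\mathcal C_k)$ from $Hoch(A_\bullet)$ to $Hoch(A'_\bullet)$ is simply a collection of chain maps $C_*(A_i)\longrightarrow C_*(A'_i)$, one for each $i\geq 0$, with no compatibility required with the products $\tau^{sh}$. So it suffices to show that, for each fixed $i\geq 0$ and $x\in C$, the maps $C^{\Phi_i}_n(x)$ in \eqref{pi3} commute with the Hochschild differentials $b$.

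For $i=0$, we have $A_0=A'_0=k$ and $\Phi_0(x)(s)=\epsilon(x)s$ by \eqref{ut}. Using the counit property, $C^{\Phi_0}_n(x)$ is multiplication by $\epsilon(x)$, which is clearly a chain map.

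For $i\geq 1$, condition (2) of Definition \ref{D3.r1} says that $(C,\Phi_i)$ is an ordinary coalgebra measuring from $A_i$ to $A'_i$ with $C$ cocommutative. We can then invoke \cite[Proposition 2.2]{AB}, recalled just before the statement, which gives that $C^{\Phi_i}_*(x)$ is a morphism of complexes $C_*(A_i)\longrightarrow C_*(A'_i)$.

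For completeness, I would sketch why that cited result holds by checking $b\circ C^{\Phi_i}_n(x)=C^{\Phi_i}_{n-1}(x)\circ b$ face by face.
\begin{itemize}
\item For an inner face $d_j$ with $0\leq j\leq n-1$, one merges $a_j a_{j+1}$. The measuring identity $\Phi_i(y)(a_ja_{j+1})=\Phi_i(y_{(1)})(a_j)\Phi_i(y_{(2)})(a_{j+1})$, applied to the $(j+1)$-st tensor factor of $\Delta^{n-1}(x)$, reproduces $\Delta^{n}(x)$ by coassociativity.
\item The last face $d_n$ sends $(a_0,\ldots,a_n)$ to $(a_na_0,a_1,\ldots,a_{n-1})$. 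Here the tensor factor $x_{(n+1)}$ attached to $a_n$ must be moved to the front, next to $x_{(1)}$. This is exactly where cocommutativity of $C$ is used: the iterated coproduct is invariant under cyclic permutation of its factors.
\end{itemize}
This last-face check is the only nontrivial point, and it is already handled by the cited proposition.

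Finally, collecting the chain maps $\{C^{\Phi_i}_*(x)\}_{i\geq 0}$ gives the required morphism $Hoch^\Phi(x)$ in $\mathscr Gr(\mathcal C_k)$. Linearity of $x\mapsto Hoch^\Phi(x)$ is immediate from linearity of $\Phi_i$ and $\Delta^n$. Compatibility with $\tau^{sh}$ is not needed here; that belongs to the subsequent measuring statement.
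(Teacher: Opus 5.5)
Your proposal is correct and follows the paper's proof: both use condition (2) of Definition \ref{D3.r1} to see that each $(C,\Phi_i)$ is an ordinary measuring, invoke \cite[Proposition 2.2]{AB} to get chain maps $C^{\Phi_i}_*(x)$, and collect these into a morphism in $\mathscr Gr(\mathcal C_k)$. Your separate handling of $i=0$ and your face-by-face sketch, with cocommutativity used only for the last face, are accurate additions beyond what the paper writes.
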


\begin{proof}
By Definition \ref{D3.r1}, we know that each $\Phi_i:C\longrightarrow Hom_k(A_i,A'_i)$ is a coalgebra measuring. Accordingly, it follows from \cite[Proposition 2.2]{AB} that for each 
$x\in C$, the maps $ C^{\Phi_i}_n(x):C_n(A_i)=A_i^{\otimes n+1}\longrightarrow A'^{\otimes n+1}_i=C_n(A'_i) $ as described in \eqref{pi3} determine  a morphism $C^{\Phi_i}_*(x): C_*(A_i)\longrightarrow C_*(A'_i)$ between Hochschild complexes. Together, the collection  $Hoch^\Phi(x)=\{Hoch^\Phi_{i*}(x)=C^{\Phi_i}_*(x)\}_{i\geq 0}$ determines a morphism 
 $Hoch^\Phi(x):Hoch(A_\bullet)\longrightarrow Hoch(A'_\bullet)$ in $\mathscr Gr(\mathcal C_{k})$.   
\end{proof}

We are now ready to show that a coalgebra measuring $(C,\Phi=\{\Phi_i\}_{i\geq 0})$ between multiplicative sequences $A_\bullet,A'_\bullet\in \mathscr{M}ult_k$ determines a coalgebra measuring from $Hoch(A_\bullet)$ to $Hoch(A'_\bullet)$. For this, we need to make explicit the notion of a coalgebra measuring between graded algebras in the monoidal category $\mathcal C_k$.

\begin{defn}\label{def3.5}
     Let $T=\{T_{n,*}\}_{n\geq 0}, T'=\{T'_{n,*}\}_{n\geq 0}$ be graded algebras in $\mathcal C_{k}$, equipped with multiplication maps
     \begin{equation*}
         \mu=\left\{\mu_{m,n}:T_{m,*}\otimes T_{n,*}\longrightarrow T_{m+n,*}\right\}_{m,n\geq 0}\qquad\textit{and}\qquad\mu'=\left\{\mu'_{m,n}:T'_{m,*}\otimes T'_{n,*}\longrightarrow T'_{m+n,*}\right\}_{m,n\geq 0}
     \end{equation*}respectively. A coalgebra measuring of graded algebras in $\mathcal C_k$  from $T$ to $T'$ is a pair $(C,\Xi)$ consisting of
     \begin{enumerate}[(a)]
         \item a cocommutative coalgebra $(C,\Delta,\epsilon)$, and
         \item a map $\Xi:C\longrightarrow \mathscr Gr(\mathcal C_{k})(T,T')$ such that the collection  $\left\{\Xi(x)=\left\{\Xi_{i}(x):T_{i,*}\longrightarrow T'_{i,*}\right\}_{i\geq 0}\right\}_{x\in C}$ of $k$-linear maps satisfies
     \begin{equation}\label{pi15}
        \Xi_{m+n}(x)\left(\mu_{m,n}(t^m_{p}\otimes t^n_{q})\right)=\sum \mu'_{m,n}\left(\left(\Xi_m(x_{(1)})(t^m_{p})\right)\otimes\left(\Xi_n(x_{(2)})(t^n_{q}) \right)\right) \qquad \Xi_0(x)(1_{T_{0,0}})=\epsilon(x)(1_{T'_{0,0}})
     \end{equation}
     for all $m,n,p,q\geq 0$ and  $t^m_{p}\in T_{m,p}$, $t^n_{q}\in T_{n,q}$.
     \end{enumerate}
\end{defn}

\begin{Thm}\label{Thm3.6}
     Let $A_\bullet,A'_\bullet\in \mathscr{M}ult_k$ be multiplicative sequences of algebras. Let $C$ be a cocommutative coalgebra and $(C,\Phi=\{\Phi_i\}_{i\geq 0})$ be a coalgebra measuring from $A_\bullet$ to $A'_\bullet$. Then, the map
\begin{equation} \label{eq3.26pp}
\begin{array}{c} Hoch^\Phi: C\longrightarrow \mathscr Gr(\mathcal C_{k})(Hoch(A_\bullet),Hoch(A'_\bullet)) \\ x
\mapsto Hoch^\Phi(x)=\{Hoch^\Phi_{i}(x):=C^{\Phi_i}_*(x):Hoch(A_\bullet)_i=C_*(A_i)\longrightarrow C_*(A'_i)=Hoch(A'_\bullet)_i\}_{i\geq 0}\\
\end{array}
\end{equation}  determines a coalgebra measuring $(C,Hoch^\Phi)$ of graded algebras in $\mathcal C_k$  from $Hoch(A_\bullet)$ to $Hoch(A'_\bullet)$.
\end{Thm}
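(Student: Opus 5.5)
By Lemma \ref{prop3.4}, each $Hoch^\Phi(x)$ is already a morphism in $\mathscr Gr(\mathcal C_k)$. Since $x\mapsto C^{\Phi_i}_*(x)$ is linear in $x$, the map $Hoch^\Phi$ is $k$-linear. So it remains to verify the two conditions in \eqref{pi15}.

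The unit condition is immediate. We have $Hoch(A_\bullet)_{0,0}=C_0(A_0)=k$, and $C^{\Phi_0}_0(x)(1)=\Phi_0(x)(1)=\epsilon(x)$ by \eqref{ut}.

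For the multiplicativity condition, I would factor $\tau^{sh}_{m,n}=C_*(\tau_{m,n})\circ sh$ and prove two separate compatibilities.

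\textbf{Step 1 (shuffle step).} Consider the maps $\Phi_m\otimes\Phi_n:C\otimes C\to Hom_k(A_m\otimes A_n,A'_m\otimes A'_n)$. Composing with $\Delta$ gives
\[
\Phi_{m}\tilde\otimes\Phi_n(x)=\sum\Phi_m(x_{(1)})\otimes\Phi_n(x_{(2)}).
\]
Because $C$ is cocommutative, this is a coalgebra measuring from $A_m\otimes A_n$ to $A'_m\otimes A'_n$. The claim to prove is
\[
C^{\Phi_m\tilde\otimes\Phi_n}_{p+q}(x)\circ sh_{p,q}=\sum sh_{p,q}\circ\big(C^{\Phi_m}_p(x_{(1)})\otimes C^{\Phi_n}_q(x_{(2)})\big).
\]
This is essentially the result of \cite{AB} that measuring-induced maps respect the shuffle product. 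I would check it directly. Fix a shuffle $\sigma$ and apply the left side to $\sigma\cdot(a_0\otimes a'_0,a_1\otimes1,\ldots,1\otimes a'_q)$. Because $C$ is cocommutative, the iterated coproduct $\Delta^{p+q}(x)$ is invariant under permuting tensor factors, so $C^{\Phi}_*(x)$ commutes with the $S_{p+q}$-action. This lets us remove $\sigma$.

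We then compare entries. The entries of the form $a_i\otimes 1$ go to $\Phi_m(y)(a_i)\otimes\epsilon(z)1$, and the entries $1\otimes a'_j$ go to $\epsilon\cdot 1\otimes \Phi_n(\cdot)(a'_j)$, using $\Phi(x)(1)=\epsilon(x)1$. The counit factors collapse the extra coproduct legs. Regrouping by coassociativity and cocommutativity, the coproduct legs split as $(x_{(1)}\text{ legs on }a_0,\ldots,a_p)\otimes(x_{(2)}\text{ legs on }a'_0,\ldots,a'_q)$, which gives exactly the right side.

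\textbf{Step 2 (naturality in $\tau$).} The next claim is
\[
C_*(\tau'_{m,n})\circ C^{\Phi_m\tilde\otimes\Phi_n}_*(x)=C^{\Phi_{m+n}}_*(x)\circ C_*(\tau_{m,n}).
\]
Both sides apply iterated coproduct legs entrywise. Entrywise, the equality is the compatibility \eqref{diag4prm}, applied once to each tensor factor with $x_{(i)}$ in place of $x$. One uses coassociativity to identify $\Delta^{r}$ followed by $\Delta$ on each leg with the refined iterated coproduct.

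Composing Steps 1 and 2 gives
\[
Hoch^\Phi_{m+n}(x)\circ\tau^{sh}_{m,n}=\sum\tau'^{sh}_{m,n}\circ\big(Hoch^\Phi_m(x_{(1)})\otimes Hoch^\Phi_n(x_{(2)})\big),
\]
which is \eqref{pi15}. The cases $m=0$ or $n=0$ reduce, via $\tau_{0,n}=id$ and \eqref{ut}, to the counit axiom.

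\textbf{Expected obstacle.} The main difficulty is the Sweedler-index bookkeeping in Step 1. We must show that the $(p+q+1)$-fold coproduct, with counits inserted in the padding slots, reorganizes precisely into $\Delta^p(x_{(1)})\otimes\Delta^q(x_{(2)})$ interleaved according to $\sigma$. I would handle this by invoking cocommutativity once to reorder legs freely, rather than tracking $\sigma$ explicitly.
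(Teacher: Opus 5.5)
Your proof is correct. It uses the same ingredients as the paper's argument: the $S_r$-equivariance of $C_*(\tau_{m,n})$ and of $C^{\Phi_i}_*(x)$ coming from cocommutativity, the unit condition $\Phi_i(z)(1)=\epsilon(z)1$ to absorb the padding entries, and the compatibility condition of Definition \ref{D3.r1}. The difference is in how you organize them.

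The paper does one direct computation. It expands both $Hoch^\Phi_{m+n}(x)\circ\tau^{sh}_{m,n}$ and $\tau'^{sh}_{m,n}\circ\big(Hoch^\Phi_m(x_{(1)})\otimes Hoch^\Phi_n(x_{(2)})\big)$ entrywise over the shuffles. It then matches the two in a single step, using the compatibility condition together with cocommutativity.

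You instead factor through the auxiliary measuring $x\mapsto\sum\Phi_m(x_{(1)})\otimes\Phi_n(x_{(2)})$ from $A_m\otimes A_n$ to $A'_m\otimes A'_n$. This splits the claim into two parts:
\begin{itemize}
\item[(i)] a pure shuffle-compatibility statement, which is exactly the result of \cite{AB} and could simply be cited;
\item[(ii)] the naturality identity $C_*(\tau'_{m,n})\circ C^{\Phi_m\tilde{\otimes}\Phi_n}_*(x)=C^{\Phi_{m+n}}_*(x)\circ C_*(\tau_{m,n})$.
\end{itemize}

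Your version is more modular, because it shows where each hypothesis enters. All the Sweedler-index and cocommutativity bookkeeping sits in Step 1. The multiplicative-sequence structure enters only in Step 2.

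Step 2 is just the compatibility condition applied leg by leg. It needs neither cocommutativity nor the coassociativity refinement you mention. Each leg $x_{(i)}$ of the iterated coproduct is fed directly into $\Phi_m\tilde{\otimes}\Phi_n$, which by definition is $\sum\Phi_m(x_{(i)(1)})\otimes\Phi_n(x_{(i)(2)})$.

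The paper's computation is more self-contained but hides this separation. Finally, your Steps 1 and 2 already cover the cases $m=0$ or $n=0$. The compatibility condition is imposed for all $m,n\geq 0$ and $\Phi_0$ is a measuring, so no separate treatment is needed.
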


\begin{proof} 
As in the proof of Theorem \ref{Thm2.10}, we know that for $r\geq 0$, the maps $C_r(\tau_{m,n}):C_r(A_m  \otimes A_n)\longrightarrow C_r(A_{m+n})$ induced by the morphism 
    $\tau_{m,n}:A_m\otimes A_n\longrightarrow A_{m+n}$ of algebras  preserve the action (described in \eqref{act2}) of $S_r$ on the terms in the Hochschild complexes. The same applies to the  maps $C_r(\tau'_{m,n}):C_r(A'_m  \otimes A'_n)\longrightarrow C_r(A'_{m+n})$. 
    Further, since $C$ is cocommutative, for any $i\geq 0$ and $x\in C$, the maps  in
\begin{equation}
Hoch^\Phi_{i}(x)=C^{\Phi_i}_*(x):Hoch(A_\bullet)_i=C_*(A_i)\longrightarrow C_*(A'_i)=Hoch(A'_\bullet)_i
\end{equation} described in \eqref{pi3} preserve the action of permutation groups on terms in the Hochschild complexes.  
      
     \smallskip To show that  $(C,Hoch^\Phi)$ is a measuring, we have to verify the condition in \eqref{pi15}. If either $m=0$ or $n=0$, this is clear from \eqref{ut}. Let $m,n\geq 1$, and consider $(a_0^m,\ldots, a_p^m)\in C_p(A_m),(a^n_0,\ldots, a^n_q)\in C_q(A_n)$. Then, for each $x\in C$, we have
   \begin{align}
       &Hoch^\Phi_{m+n}(x)\left(\tau^{sh}_{m,n}\left((a_0^m,\ldots, a_p^m)\otimes (a_0^n,\ldots, a_q^n)\right)\right)\notag\\
       &=C^{\Phi_{m+n}}_{p+q}(x)\left(\sum_{\sigma\in S_{p,q}}sgn(\sigma)\left(C_{p+q}(\tau_{m,n})\left(\sigma\cdot\left(a_0^m\otimes a_0^n,a_1^m\otimes1,\ldots,a_p^m\otimes 1,1\otimes a_1^n,\ldots ,1\otimes a_q^n\right)\right)\right)\right)\notag\\
       &=C^{\Phi_{m+n}}_{p+q}(x)\left(\sum_{\sigma\in S_{p,q}}sgn(\sigma)\left(\sigma\cdot\left(\tau_{m,n}(a_0^m\otimes a_0^n),\tau_{m,n}(a_1^m\otimes1),\ldots,\tau_{m,n}(a_p^m\otimes 1),\tau_{m,n}(1\otimes a_1^n),\ldots ,\tau_{m,n}(1\otimes a_q^n)\right)\right)\right)\notag\\
       &=\sum_{\sigma\in S_{p,q}}sgn(\sigma)\Big(\sigma\cdot\Big(\Phi_{m+n}(x_{(1)})(\tau_{m,n}(a^m_0\otimes a_0^n)),\Phi_{m+n}(x_{(2)})(\tau_{m,n}(a_1^m\otimes1)),\ldots,\Phi_{m+n}(x_{(p+1)})(\tau_{m,n}(a_p^m\otimes 1)),\notag\\
       &\qquad\Phi_{m+n}(x_{(p+2)})(\tau_{m,n}(1\otimes a_1^n)),\ldots ,\Phi_{m+n}(x_{(p+q+1)})(\tau_{m,n}(1\otimes a_q^n))\Big)\Big)\notag
       \end{align}

      On the other hand, we have
       \begin{align*}
&\tau'^{sh}_{m,n}\left(Hoch_m^\Phi(x_{(1)})(a_0^m,\ldots, a_p^m)\otimes Hoch_n^\Phi(x_{(2)})(a_0^n,\ldots, a_p^n)\right)\\
           &=\tau'^{sh}_{m,n}\left(\left(C^{\Phi_{m}}_{p}(x_{(1)})\otimes C^{\Phi_{n}}_{q}(x_{(2)})\right)\left((a_0^m,\ldots, a_p^m)\otimes (a^n_0,\ldots, a_q^n)\right)\right)\\
           &=\tau'^{sh}_{m,n}\left(\left(\Phi_{m}(x_{(1)})(a_0^m),\Phi_{m}(x_{(2)})(a^m_1),\ldots, \Phi_{m}(x_{(p+1)})(a^m_p)\right)\otimes \left(\Phi_{n}(x_{(p+2)})(a^n_0), \Phi_{n}(x_{(p+3)})(a_1^n),\ldots,\Phi_{n}(x_{(p+q+2)})(a^n_q)\right)\right)\\
           &=C_{p+q}(\tau'_{m,n})\sum_{\sigma\in S_{p,q}}sgn(\sigma)\Big(\sigma\cdot \Big(\Phi_{m}(x_{(1)})(a_0^m)\otimes \Phi_{n}(x_{(p+2)})(a^n_0),\Phi_{m}(x_{(2)})(a_1^m)\otimes 1,\ldots, \Phi_{m}(x_{(p+1)})(a_p^m)\otimes 1,\\
           &\qquad 1\otimes \Phi_{n}(x_{(p+3)})(a_1^n),\ldots,1\otimes\Phi_{n}(x_{(p+q+2)})(a^n_q)\Big)\Big)\notag\\
           &=\sum_{\sigma\in S_{p,q}}sgn(\sigma)\Big(\sigma\cdot\Big(\Phi_{m+n}(x_{(1)})(\tau_{m,n}(a_0^m\otimes a_0^n)),\Phi_{m+n}(x_{(2)})(\tau_{m,n}(a_1^m\otimes1)),\ldots,\Phi_{m+n}(x_{(p+1)})(\tau_{m,n}(a_p^m\otimes 1)),\\
          &\qquad\Phi_{m+n}(x_{(p+2)})(\tau_{m,n}(1\otimes a_1^n)),\ldots ,\Phi_{m+n}(x_{(p+q+1)})(\tau_{m,n}(1\otimes a_q^n))\Big)\Big)\notag
       \end{align*}
       where the last equality is obtained by applying the condition \eqref{diag4} in Definition \ref{D3.r1} as well as using the cocommutativity of $C$.   Finally, by the definition of $\Phi_0$, we have $Hoch^\Phi_{0}(x)(1)=\Phi_0(x)(1)=\epsilon(x)1$. This proves the result.
      \end{proof}

	\section{Universal measuring coalgebras and  multiplicative sequences}
    We let $Co\mathcal Alg_k$ denote the category of $k$-coalgebras. It is well known that the forgetful functor $Co\mathcal Alg_k\longrightarrow \mathcal Vect_k$ has a right adjoint $\mathfrak C: \mathcal Vect_k\longrightarrow Co\mathcal Alg_k$. In other words, for $C\in  Co\mathcal Alg_k$ and $V\in \mathcal Vect_k$, we have an isomorphism
    \begin{equation}\label{eq4.1p}
             \mathcal Vect_k(C, V)\cong Co\mathcal Alg_k(C,\mathfrak C(V)).
    \end{equation}
    
    We will now show that for  multiplicative sequences  $A_\bullet$, $A_\bullet' \in \mathscr{M}ult_k$, there is a unique coalgebra measuring which is universal among all coalgebra measurings from $A_\bullet$ to $A_\bullet'$.

    \begin{thm}\label{Prop4.1}
        Let $A_\bullet,A_\bullet' \in \mathscr{M}ult_k$. Then, there exists a cocommutative $k$-coalgebra $\mathcal M_c(A_\bullet, A_\bullet')$ and a measuring 
        \begin{equation}(\mathcal M_c(A_\bullet, A_\bullet'),\Phi)=(\mathcal M_c(A_\bullet, A_\bullet'),\Phi=\left\{\Phi_n:\mathcal M_c(A_\bullet, A_\bullet')\longrightarrow Hom_k(A_n,A_n')\right\}_{n\geq 0})
        \end{equation} satisfying the following universal property: for any coalgebra measuring $\left(C,\Theta=\left\{\Theta_n:C\longrightarrow Hom_k(A_n,A_n')\right\}_{n\geq 0}\right)$ from $A_\bullet$ to $A_\bullet'$ with $C$ cocommutative, there exists a unique coalgebra morphism $\zeta: C\longrightarrow \mathcal M_c(A_\bullet, A_\bullet')$ such that the following diagram 
         \begin{equation}\label{42digu}
         \begin{array}{c}
    \begin{tikzpicture}[>=stealth]
\node (A) at (0,0) {$C$};
\node (B) at (2.0,1.5) {$\mathcal M_c(A_\bullet,A_\bullet')$};
\node (C) at (4.6,0) {$ {Hom}_k(A_n,A_n')$};

\draw[->, dashed] (A) -- node[above left] {$\zeta$} (B);
\draw[->] (A) -- node[below left] {$\Theta_n$} (C);
\draw[->] (B) -- node[right] {$\Phi_n$} (C);
\end{tikzpicture}
\end{array}
\end{equation}
commutes for each $n\geq 0.$
    \end{thm}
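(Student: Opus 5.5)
We construct $\mathcal M_c(A_\bullet,A'_\bullet)$ as the largest cocommutative subcoalgebra of a cofree coalgebra satisfying the measuring identities. This is the classical construction of the Sweedler Hom, run through the cofree functor $\mathfrak C$ of \eqref{eq4.1p}.

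First, set $V:=\prod_{n\geq 1}Hom_k(A_n,A'_n)$ and take the cofree cocommutative coalgebra $\mathfrak C_c(V)$ on $V$. This is the right adjoint of the forgetful functor $CoCo\mathcal Alg_k\to\mathcal Vect_k$. It exists, for instance, as the largest cocommutative subcoalgebra of $\mathfrak C(V)$; the sum of all cocommutative subcoalgebras is again a cocommutative subcoalgebra. Let $\pi:\mathfrak C_c(V)\to V$ be the counit of the adjunction, with components $\pi_n:\mathfrak C_c(V)\to Hom_k(A_n,A'_n)$, and put $\pi_0(x)=\epsilon(x)\,id_k$. Given a cocommutative measuring $(C,\Theta)$, the linear map $(\Theta_n)_{n\geq 1}:C\to V$ corresponds to a unique coalgebra map $\widetilde\Theta:C\to\mathfrak C_c(V)$ with $\pi_n\circ\widetilde\Theta=\Theta_n$.

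Second, define $\mathcal M_c(A_\bullet,A'_\bullet)$ to be the sum of all subcoalgebras $D\subseteq\mathfrak C_c(V)$ such that $(D,\pi|_D)$ is a coalgebra measuring from $A_\bullet$ to $A'_\bullet$ in the sense of Definition \ref{D3.r1}. The key check is that this family is closed under sums, so that the sum is the largest such subcoalgebra. Each condition in Definition \ref{D3.r1} is linear in $x$ once $\Delta(x)$ is expressed inside $D\otimes D$. If $D_1,D_2$ both satisfy them, then for $x=x_1+x_2$ with $x_j\in D_j$ we have $\Delta(x)=\Delta(x_1)+\Delta(x_2)\in D_1\otimes D_1+D_2\otimes D_2$. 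The measuring identity for each $\pi_n$, the unit condition, and the compatibility \eqref{diag4prm} with $\tau,\tau'$ therefore hold for $x$, because they hold separately for $x_1$ and $x_2$. Hence $D_1+D_2$, and by the same argument an arbitrary sum, is again such a subcoalgebra. The result is cocommutative, being a subcoalgebra of $\mathfrak C_c(V)$, and we take $\Phi_n:=\pi_n|_{\mathcal M_c}$.

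Third, we prove the universal property. The image $\widetilde\Theta(C)$ is a subcoalgebra of $\mathfrak C_c(V)$. Because $\widetilde\Theta$ is a coalgebra map and $\Theta$ is a measuring, $\pi$ restricted to $\widetilde\Theta(C)$ satisfies all the conditions of Definition \ref{D3.r1}. For example, for $y=\widetilde\Theta(x)$ we get
\[
\pi_{m+n}(y)\circ\tau_{m,n}=\Theta_{m+n}(x)\circ\tau_{m,n}=\sum\tau'_{m,n}\circ(\Theta_m(x_{(1)})\otimes\Theta_n(x_{(2)})),
\]
and the right-hand side equals $\sum\tau'_{m,n}\circ(\pi_m(y_{(1)})\otimes\pi_n(y_{(2)}))$ since $\Delta(y)=\sum\widetilde\Theta(x_{(1)})\otimes\widetilde\Theta(x_{(2)})$. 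The delicate point is that this is an identity for every $y$ in the image, written through a preimage; this is fine because the identity is linear in $y$ and the map $\widetilde\Theta\otimes\widetilde\Theta$ intertwines the coproducts. Therefore $\widetilde\Theta(C)\subseteq\mathcal M_c$, and $\zeta:=\widetilde\Theta$ corestricted to $\mathcal M_c$ makes \eqref{42digu} commute. The case $n=0$ is automatic, since $\Theta_0(x)=\epsilon(x)\,id$ and $\zeta$ preserves counits.

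Uniqueness follows from the cofreeness of $\mathfrak C_c(V)$: any coalgebra map $\zeta'$ with $\Phi_n\circ\zeta'=\Theta_n$, composed with the inclusion $\mathcal M_c\subseteq\mathfrak C_c(V)$, is a coalgebra map $C\to\mathfrak C_c(V)$ lifting $(\Theta_n)_n$. It must therefore equal $\widetilde\Theta$.

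The step I expect to be the main obstacle is the second one: showing that the measuring conditions, in particular the compatibility \eqref{diag4prm} involving $\Delta$, pass to arbitrary sums of subcoalgebras.
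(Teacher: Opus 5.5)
Your proposal is correct and follows essentially the same route as the paper. Both arguments take the sum of all (cocommutative) subcoalgebras of a cofree coalgebra on $\prod_n Hom_k(A_n,A'_n)$ on which the projections form a measuring, and then observe that the image of the classifying coalgebra map of $(C,\Theta)$ lies in this sum. The differences are cosmetic: you pass first to the cofree cocommutative coalgebra and drop the $n=0$ factor. You also spell out two points the paper leaves as clear, namely closure under sums (the measuring conditions cut out a linear subspace) and uniqueness via cofreeness.
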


    \begin{proof}
        We let $H$ denote the product of the family $\left\{ {Hom}_k(A_n,A_n')\right\}_{n\geq 0}$ of $k$-vector spaces along with canonical projections $\pi_n:H\longrightarrow  {Hom}_k(A_n,A_n')$ for $n\geq 0$.  The adjunction in \eqref{eq4.1p} gives us the   cofree coalgebra $\mathfrak C(H)$ over $H$ and the canonical morphism $\pi(H):\mathfrak C(H)\longrightarrow H$ in $\mathcal Vect_k$. We now set $\mathcal M_c(A_\bullet, A_\bullet'):=\sum D$, where the sum is taken over all the cocommutative subcoalgebras of $\mathfrak C(H)$ such that $\left\{(\pi_n\circ\pi(H))\big|_D: D\longrightarrow Hom_k(A_n, A_n')\right\}_{n\geq 0}$ is a measuring from $A_\bullet$ to $A_\bullet'$.  It is clear that  the sum $\mathcal M_c(A_\bullet, A_\bullet')$ is a cocommutative coalgebra and the collection $\Phi:=\left\{\Phi_n:=(\pi_n\circ\pi(H))\big|_{\mathcal M_c(A_\bullet, A_\bullet')}\right\}_{n\geq 0}$   gives a measuring from $A_\bullet$ to $A_\bullet'$.\\
        
       We now consider a measuring $\left(C,\Theta=\left\{\Theta_n: C\longrightarrow Hom_k(A_n,A_n')\right\}_{n\geq 0}\right)$ from $A_\bullet$ to $A_\bullet'$, with $C$ cocommutative. Applying the adjunction in \eqref{eq4.1p}, we note that  $\Theta=\{\Theta_n\}_{n\geq 0}:C\longrightarrow H=\underset{n\geq 0}{\prod}Hom_k(A_n,A'_n)$  corresponds to a coalgebra map $\zeta: C\longrightarrow \mathfrak C(H)$ such that $\pi(H)\circ \zeta=\Theta$. For each $n\geq 0$, we now have a commutative diagram
        \begin{equation}\label{44digu}
        \begin{array}{c}
\begin{tikzcd}[column sep=large, row sep=large]
& C \arrow[dl, dashed, "\zeta"'] 
    \arrow[d, "\Theta"] 
    \arrow[dr, "{\Theta_n}"] & \\
\mathfrak C(H) \arrow[r, "\pi(H)"] 
& H \arrow[r, "\pi_n"] 
& { {Hom}}_k(A_n,A_n')
\end{tikzcd}
\end{array}
\end{equation}
         Since $C$ is cocommutative, so is the subcoalgebra $\zeta(C)\subseteq \mathfrak C(H)$. By the definition of $\mathcal M_c(A_\bullet, A_\bullet')$, it follows that 
         $\zeta(C)\subseteq \mathcal M_c(A_\bullet, A_\bullet')\subseteq \mathfrak C(H)$. The commutativity of \eqref{42digu} is now clear from \eqref{44digu}. 
    \end{proof}

    We denote by $CoCo\mathcal Alg_k$  the category of cocommutative coalgebras over $k$. It is well known that $CoCo\mathcal Alg_k$ is a symmetric monoidal category. Our next aim is to show that multiplicative sequences of algebras can be enriched over the symmetric monoidal category $CoCo\mathcal Alg_k$.

    \begin{thm}\label{Prop4.2}
        Let $(A_\bullet,\tau),( A_\bullet',\tau')$, $(A_\bullet'',\tau'')\in \mathscr{M}ult_k$. Let $\left(C,\Phi=\left\{\Phi_n:C\longrightarrow Hom_k(A_n,A_n')\right\}_{n\geq 0}\right)$ be a coalgebra measuring from $A_\bullet$ to $A_\bullet'$ and $\left(C',\Phi'=\left\{\Phi'_n:C\longrightarrow Hom_k(A_n',A_n'')\right\}_{n\geq 0}\right)$ be a coalgebra measuring from  $A_\bullet'$ to $A_\bullet''$. Then, the pair $\left(C\otimes C', \Phi'\circ\Phi=\left\{ (\Phi'\circ\Phi)_n\right\}_{n\geq 0}\right)$ determines a measuring from $A_\bullet$ to $A_\bullet''$, where 
        \begin{equation}\label{eq4.3p}
        \begin{array}{c}
              (\Phi'\circ\Phi)_n:C\otimes C'\xrightarrow{\quad\Phi_n\otimes \Phi'_n\quad} {Hom}_k(A_n,A_n')\otimes  {Hom}_k(A_n',A_n'')\xrightarrow{\quad\circ\quad} {Hom}_k(A_n,A_n'')\qquad \forall n\geq 0
        \end{array}
        \end{equation}
    \end{thm}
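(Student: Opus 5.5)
We check the three conditions of Definition \ref{D3.r1} directly for the pair $(C\otimes C',\Phi'\circ\Phi)$. Writing $(\Phi'\circ\Phi)_n(x\otimes x')=\Phi'_n(x')\circ\Phi_n(x)$, we first note that $C\otimes C'$ is a cocommutative coalgebra. Its coproduct is $\Delta(x\otimes x')=\sum (x_{(1)}\otimes x'_{(1)})\otimes(x_{(2)}\otimes x'_{(2)})$ and its counit is $\epsilon\otimes\epsilon'$. Cocommutativity follows from that of $C$ and $C'$, using the symmetry of $CoCo\mathcal Alg_k$.

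Next we check condition (1) at level $0$. By \eqref{ut} applied to both measurings, $(\Phi'\circ\Phi)_0(x\otimes x')(s)=\epsilon'(x')\epsilon(x)s$, which is exactly the counit of $C\otimes C'$ applied to $x\otimes x'$, times $s$.

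Condition (2) is the classical fact that the composite of coalgebra measurings is a measuring by the tensor product coalgebra (Sweedler). For $a_1,a_2\in A_n$ we compute
\begin{align*}
\Phi'_n(x')\big(\Phi_n(x)(a_1a_2)\big)&=\Phi'_n(x')\big(\Phi_n(x_{(1)})(a_1)\,\Phi_n(x_{(2)})(a_2)\big)\\
&=\Phi'_n(x'_{(1)})\Phi_n(x_{(1)})(a_1)\cdot\Phi'_n(x'_{(2)})\Phi_n(x_{(2)})(a_2).
\end{align*}
This is the measuring identity for $C\otimes C'$. The unit condition is immediate: $\Phi'_n(x')(\epsilon(x)1)=\epsilon(x)\epsilon'(x')1$.

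The step requiring some care is condition (3), the compatibility \eqref{diag4prm} with the structure maps. For $a^m\in A_m$ and $a^n\in A_n$ we apply \eqref{diag4prm} first for $\Phi$ and then for $\Phi'$:
\begin{align*}
\Phi'_{m+n}(x')\Phi_{m+n}(x)\big(\tau_{m,n}(a^m\otimes a^n)\big)&=\Phi'_{m+n}(x')\Big(\tau'_{m,n}\big(\Phi_m(x_{(1)})(a^m)\otimes\Phi_n(x_{(2)})(a^n)\big)\Big)\\
&=\tau''_{m,n}\Big(\Phi'_m(x'_{(1)})\Phi_m(x_{(1)})(a^m)\otimes\Phi'_n(x'_{(2)})\Phi_n(x_{(2)})(a^n)\Big).
\end{align*}
Since $(x\otimes x')_{(1)}\otimes(x\otimes x')_{(2)}=(x_{(1)}\otimes x'_{(1)})\otimes(x_{(2)}\otimes x'_{(2)})$, this is exactly \eqref{diag4prm} for $\Phi'\circ\Phi$.

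The only potential obstacle is bookkeeping of Sweedler indices under the tensor product coproduct. No cocommutativity is actually needed in this step beyond ensuring that $C\otimes C'$ lies in $CoCo\mathcal Alg_k$. The cases $m=0$ or $n=0$ follow from the convention $\tau_{0,n}=\tau_{n,0}=id$ together with the counit axioms.
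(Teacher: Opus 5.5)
Your proposal is correct and follows the paper's proof. The levelwise measuring property is the standard composition of Sweedler measurings. The compatibility with the structure maps $\tau,\tau',\tau''$ is obtained, as in the paper, by applying the condition for $\Phi$ and then for $\Phi'$, with the coproduct of $C\otimes C'$ matching the Sweedler indices. You also write out the level-$0$ condition, the unit condition and the cocommutativity of $C\otimes C'$, all of which the paper leaves implicit.
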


    \begin{proof}
        It is clear that for each $n\geq 0$, the pair $(C\otimes C',(\Phi'\circ\Phi)_n)$ is a measuring from $A_n$ to $A_n''$. We must verify the compatibility condition  \eqref{diag4} appearing in 
        Definition \ref{D3.r1}. This is immediate if $m=0$ or $n=0$. We take $m,n\geq 1$. For   $a^m\in A_m,a^n\in A_n$, $ x\in C$ and $x'\in C'$, we now have
        \begin{align*}
            \left((\Phi'\circ\Phi)_{m+n}(x\otimes x')\right)\left(\tau_{m,n}(a^m\otimes a^n)\right)
            & =\Phi'_{m+n}(x')\left( \Phi_{m+n}(x)\left(\tau_{m,n}(a^m\otimes a^n)\right)\right) \tag{by   \eqref{eq4.3p} }\\
            &=\Phi'_{m+n}(x')\left(\tau_{m,n}'\left(\Phi_m(x_{(1)})(a^m)\otimes \Phi_n(x_{(2)})(a^n)\right)\right)\tag{as $\Phi$ is a measuring}\\
            &=\tau_{m,n}'' \left(\Phi'_m(x_{(1)}')\left(\Phi_m(x_{(1)})(a^m)\right)\otimes\Phi'_n(x_{(2)}')\left( \Phi_n(x_{(2)})(a^n)\right)\right)\tag{as $\Phi'$ is a measuring}\\
            &=\tau_{m,n}'' \left((\Phi'\circ\Phi)_{m}(x\otimes x')_{(1)}(a^m)\otimes (\Phi'\circ\Phi)_{n}(x\otimes x')_{(2)}(a^n)\right)\tag{by \eqref{eq4.3p}}.
        \end{align*}
        This completes the proof.
    \end{proof}

     \begin{Thm}\label{T4.3xq} The category $MULT_k$ given by setting      $Obj(MULT_k):=Obj(\mathscr Mult_k)$ and
\begin{equation}
MULT_k(A_\bullet,A_\bullet'):=\mathcal M_c(A_\bullet,A_\bullet') \qquad A_\bullet,A_\bullet'\in Obj(MULT_k)
\end{equation}
  is enriched over $CoCo\mathcal{A}lg_k$.
    \end{Thm}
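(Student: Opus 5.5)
To prove Theorem \ref{T4.3xq} we must produce composition morphisms and identity morphisms in $CoCo\mathcal Alg_k$ and check associativity and unitality. Throughout we write $\Phi^{A,A'}=\{\Phi^{A,A'}_n\}_{n\geq 0}$ for the universal measuring on $\mathcal M_c(A_\bullet,A'_\bullet)$ given by Proposition \ref{Prop4.1}. Every structure map will be obtained from the universal property in Proposition \ref{Prop4.1}, and every identity between such maps will follow from its uniqueness clause.

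\textbf{Composition.} Given $A_\bullet,A'_\bullet,A''_\bullet$, Proposition \ref{Prop4.2} applied to the universal measurings $(\mathcal M_c(A_\bullet,A'_\bullet),\Phi^{A,A'})$ and $(\mathcal M_c(A'_\bullet,A''_\bullet),\Phi^{A',A''})$ yields a measuring
\[
\big(\mathcal M_c(A_\bullet,A'_\bullet)\otimes\mathcal M_c(A'_\bullet,A''_\bullet),\ \Phi^{A',A''}\circ\Phi^{A,A'}\big)
\]
from $A_\bullet$ to $A''_\bullet$. The tensor product of two cocommutative coalgebras is cocommutative. Proposition \ref{Prop4.1} therefore gives a unique coalgebra map
\[
c_{A,A',A''}:\mathcal M_c(A_\bullet,A'_\bullet)\otimes\mathcal M_c(A'_\bullet,A''_\bullet)\longrightarrow\mathcal M_c(A_\bullet,A''_\bullet)
\]
with $\Phi^{A,A''}_n\circ c_{A,A',A''}=(\Phi^{A',A''}\circ\Phi^{A,A'})_n$ for all $n\geq 0$.

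\textbf{Identities.} Let $k$ be the trivial coalgebra. The maps $\Theta_n:k\to Hom_k(A_n,A_n)$, $1\mapsto id_{A_n}$, form a measuring from $A_\bullet$ to itself. Indeed, $1$ is grouplike, $\Theta_0(1)$ is multiplication by $\epsilon(1)=1$, and the compatibility condition \eqref{diag4} reduces to $\tau_{m,n}=\tau_{m,n}$. Proposition \ref{Prop4.1} then gives a unique coalgebra map $u_A:k\to\mathcal M_c(A_\bullet,A_\bullet)$ with $\Phi^{A,A}_n\circ u_A=\Theta_n$.

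\textbf{Associativity and unitality.} For associativity, compare the two coalgebra maps
\[
\mathcal M_c(A,A')\otimes\mathcal M_c(A',A'')\otimes\mathcal M_c(A'',A''')\longrightarrow\mathcal M_c(A,A'''),
\]
namely $c\circ(c\otimes id)$ and $c\circ(id\otimes c)$. Composing either one with $\Phi^{A,A'''}_n$ sends $x\otimes x'\otimes x''$ to
\[
\Phi''_n(x'')\circ\Phi'_n(x')\circ\Phi_n(x).
\]
This holds because of the defining property of $c$ and the associativity of composition of linear maps. The source is cocommutative, and this common composite is a measuring by a double application of Proposition \ref{Prop4.2}. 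The uniqueness in Proposition \ref{Prop4.1} therefore forces the two maps to agree. Unitality is handled the same way: $c\circ(u_A\otimes id)$ and the canonical isomorphism $k\otimes\mathcal M_c(A,A')\cong\mathcal M_c(A,A')$ both induce the measuring $\Phi^{A,A'}$, so they coincide by uniqueness, and likewise on the right.

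\textbf{Main difficulty.} The key point is checking that each comparison really lands inside the uniqueness statement. This means verifying that the induced families are measurings parametrized by cocommutative coalgebras, which Proposition \ref{Prop4.2} and the closure of $CoCo\mathcal Alg_k$ under $\otimes$ supply. It also means tracking the symmetric monoidal unit and associativity constraints of $CoCo\mathcal Alg_k$ carefully, although these are the canonical vector-space isomorphisms and so introduce no real difficulty.
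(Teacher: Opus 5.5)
Your proposal is correct and follows the same route as the paper. The composition map comes from Proposition \ref{Prop4.2} together with the universal property in Proposition \ref{Prop4.1}, and the identities come from the measuring $k\to Hom_k(A_n,A_n)$, $1\mapsto id_{A_n}$; you go further than the paper by explicitly deriving associativity and unitality from the uniqueness clause of Proposition \ref{Prop4.1}, which the paper only asserts to be ``clear.''
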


    \begin{proof}
    Let $A_\bullet,A_\bullet'\in {M}ULT_k$. By definition, we know that the Hom object $\mathcal M_c(A_\bullet,A_\bullet')$ lies in $CoCo\mathcal Alg_k$.  Given $A_\bullet,A_\bullet',A_\bullet''\in MULT_k$,  it follows as in Proposition \ref{Prop4.2} that  the coalgebra $ \mathcal M_c(A_\bullet,A_\bullet')\otimes \mathcal M_c(A_\bullet',A_\bullet'')$ gives a measuring from $A_\bullet$ to $A''_\bullet$. Applying the universal property in Proposition \ref{Prop4.1}, we now have an induced morphism
        \begin{equation}\label{eq4.4p}
            \mathcal M_c(A_\bullet,A_\bullet')\otimes \mathcal M_c(A_\bullet',A_\bullet'')\longrightarrow  \mathcal M_c(A_\bullet,A_\bullet''),
        \end{equation} 
        which determines the composition of Hom objects in $CoCo\mathcal Alg_k$. We consider the unit object $k$ in the symmetric monoidal category $ CoCo\mathcal Alg_k$. For any  $A_\bullet\in MULT_k$, it is easy to see that the collection of linear maps
        \begin{equation}
            k\longrightarrow  {Hom}_k(A_n,A_n)\qquad x\mapsto (s\mapsto xs)\qquad n\geq 0
        \end{equation}
         forms a coalgebra measuring from $A_\bullet$ to itself. By the universal property in Proposition \ref{Prop4.1}, this induces a morphism $k\longrightarrow \mathcal M_c(A_\bullet, A_\bullet)$ of cocommutative coalgebras. Together with the composition of Hom objects in \eqref{eq4.4p}, it is now clear that $MULT_k$ is enriched over $CoCo\mathcal Alg_k$.
    \end{proof}

   We now recall that if $A$, $A'$ are $k$-algebras, then there is a  cocommutative coalgebra $Q_c(A, A')$ as well as a measuring 
$Q_c(A,A')\longrightarrow Hom_k(A,A')$ that is universal among all cocommutative measurings from $A$ to $A'$. This coalgebra $Q_c(A,A')$ is the cocommutative part of the universal measuring coalgebra of Sweedler  (see \cite[Theorem 7.0.4]{SW}) for the algebras $A$, $A'$. In Definition \ref{def3.5}, we considered coalgebra measurings between graded algebras in the category $\mathcal C_k$ of chain complexes. As in  \cite[Theorem 7.0.4]{SW}, it can be shown that for $T,T'\in \mathscr Gr\mathcal Alg(\mathcal C_k)$, there is a  cocommutative coalgebra $Q_c(T,T')$ that gives a measuring that is universal among cocommutative measurings from $T$ to $T'$.   

\smallskip 
We now recall from Definition \ref{DL2.2xr} that $Hoch(A_\bullet)\in \mathscr Gr\mathcal Alg(\mathcal C_k)$ for any multiplicative sequence of algebras $A_\bullet\in 
\mathscr Mult_k$. Accordingly, for any $A_\bullet, A'_\bullet\in 
\mathscr Mult_k$, we can consider  the cocommutative coalgebra $Q_c(Hoch(A_\bullet), Hoch(A_\bullet'))$. We now denote by  $\widetilde{MULT_k}$ the category whose objects are the same as those of $\mathscr Mult_k$ and whose Hom objects are given by 
 $\widetilde{MULT_k}(A_\bullet,A'_\bullet):=Q_c(Hoch(A_\bullet), Hoch(A_\bullet'))$ for any $A_\bullet,A'_\bullet\in \widetilde{MULT_k}$.  It is clear that   this provides another enrichment of multiplicative sequences of algebras over cocommutative coalgebras.  We will now show that there is a functor between the enriched categories $MULT_k$ and $\widetilde{MULT_k}$.

 \begin{lem}\label{lem4.4}
        Let $A_\bullet,A_\bullet'\in \mathscr Mult_k$. Then, there exists a canonical morphism of cocommutative coalgebras 
        \begin{equation}
            \gamma(A_\bullet,A_\bullet'):\mathcal M_c(A_\bullet,A_\bullet')\longrightarrow Q_c(Hoch(A_\bullet), Hoch(A_\bullet')).
        \end{equation}
    \end{lem}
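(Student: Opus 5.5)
The plan is to obtain $\gamma(A_\bullet,A'_\bullet)$ from the universal property of $Q_c(Hoch(A_\bullet),Hoch(A'_\bullet))$, applied to a measuring built from the universal measuring of Proposition \ref{Prop4.1} by means of Theorem \ref{Thm3.6}.

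First, by Proposition \ref{Prop4.1} there is the universal cocommutative measuring $(\mathcal M_c(A_\bullet,A'_\bullet),\Phi=\{\Phi_n\}_{n\geq 0})$ from $A_\bullet$ to $A'_\bullet$. Since $\mathcal M_c(A_\bullet,A'_\bullet)$ is cocommutative, Theorem \ref{Thm3.6} applies to it directly. This produces the map
\begin{equation*}
Hoch^\Phi:\mathcal M_c(A_\bullet,A'_\bullet)\longrightarrow \mathscr Gr(\mathcal C_k)(Hoch(A_\bullet),Hoch(A'_\bullet)),\qquad x\mapsto \{C^{\Phi_i}_*(x)\}_{i\geq 0},
\end{equation*}
and $(\mathcal M_c(A_\bullet,A'_\bullet),Hoch^\Phi)$ is a coalgebra measuring of graded algebras in $\mathcal C_k$ in the sense of Definition \ref{def3.5}. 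Lemma \ref{prop3.4} guarantees that each $Hoch^\Phi(x)$ is a genuine morphism in $\mathscr Gr(\mathcal C_k)$, since it is a chain map degreewise.

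Second, I invoke the universal property of $Q_c(T,T')$ for $T=Hoch(A_\bullet)$ and $T'=Hoch(A'_\bullet)$, which the paper asserts holds as in \cite[Theorem 7.0.4]{SW}. The cocommutative measuring $(\mathcal M_c(A_\bullet,A'_\bullet),Hoch^\Phi)$ then factors uniquely through a coalgebra morphism
\begin{equation*}
\gamma(A_\bullet,A'_\bullet):\mathcal M_c(A_\bullet,A'_\bullet)\longrightarrow Q_c(Hoch(A_\bullet),Hoch(A'_\bullet)).
\end{equation*}
Composed with the universal measuring on $Q_c$, it recovers $Hoch^\Phi$. The source is cocommutative, so $\gamma(A_\bullet,A'_\bullet)$ is a morphism in $CoCo\mathcal Alg_k$, and it is canonical because it is uniquely determined by universality.

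The only point needing care is that the universal property of $Q_c(T,T')$ is used for graded algebra objects in $\mathcal C_k$ rather than for ordinary algebras. If this has to be justified, I would mimic the proof of Proposition \ref{Prop4.1}. One takes the cofree coalgebra $\mathfrak C$ on $\prod_n \mathcal C_k(T_{n,*},T'_{n,*})$ and defines $Q_c(T,T')$ as the sum of all cocommutative subcoalgebras on which the induced map satisfies \eqref{pi15}. The factorization then follows because the image of $\mathcal M_c(A_\bullet,A'_\bullet)$ in $\mathfrak C$ is such a subcoalgebra. Everything else in the argument is formal.
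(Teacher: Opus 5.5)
Your proposal is correct and follows the paper's proof: you take the universal measuring $(\mathcal M_c(A_\bullet,A'_\bullet),\Phi)$ from Proposition \ref{Prop4.1}, apply Theorem \ref{Thm3.6} to obtain the cocommutative measuring $(\mathcal M_c(A_\bullet,A'_\bullet),Hoch^\Phi)$ from $Hoch(A_\bullet)$ to $Hoch(A'_\bullet)$, and then get $\gamma(A_\bullet,A'_\bullet)$ from the universal property of $Q_c(Hoch(A_\bullet),Hoch(A'_\bullet))$. Your additional sketch of why $Q_c(T,T')$ exists for graded algebras in $\mathcal C_k$, modelled on the proof of Proposition \ref{Prop4.1}, goes slightly beyond the paper, which simply asserts this by analogy with Sweedler.
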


    \begin{proof}
        Let  $A_\bullet,A_\bullet'\in \mathscr Mult_k$. We consider the measuring $(\mathcal M_c(A_\bullet, A_\bullet'),\Phi)$ from $A_\bullet$ to $A'_\bullet$ as in Proposition \ref{Prop4.1}. Applying  Theorem \ref{Thm3.6}, it follows that $(\mathcal M_c(A_\bullet,A_\bullet'),\Phi)$ induces a measuring $(\mathcal M_c(A_\bullet,A_\bullet'),Hoch^{\Phi})$ of graded algebras in $\mathcal C_k$  from $Hoch(A_\bullet)$ to $Hoch(A'_\bullet)$. By the universal property of $Q_c(Hoch(A_\bullet), Hoch(A_\bullet'))$, we now obtain a  morphism $\gamma(A_\bullet,A_\bullet'):\mathcal M_c(A_\bullet,A_\bullet')\longrightarrow Q_c(Hoch(A_\bullet), Hoch(A_\bullet'))$ in $CoCo\mathcal Alg_k$.
    \end{proof}

We now fix some notation: for $T$, $T'\in  \mathscr Gr\mathcal Alg(\mathcal C_k)$, we denote by $\Phi(T_i,T'_i):Q_c(T,T')\longrightarrow \mathcal C_k(T_i,T'_i)$, $i\geq 0$ the maps that determine the universal cocommutative measuring $Q_c(T,T')
\longrightarrow   \mathscr Gr(\mathcal C_k)(T,T')$ from $T$ to $T'$. In particular, we have maps 
\begin{equation}\label{4.10p}
\Phi(C_*(A_i),C_*(A'_i)):Q_c(Hoch(A_\bullet), Hoch(A_\bullet'))\longrightarrow   \mathcal C_k(C_*(A_i),C_*(A'_i))\qquad i\geq 0
\end{equation}
 for $A_\bullet,A_\bullet'\in \mathscr Mult_k$.

    \begin{Thm}\label{thm4.5}
        There is a $CoCo\mathcal Alg_k$-enriched functor $\gamma: MULT_k\longrightarrow \widetilde{MULT_k}$ which is identity on objects and whose mapping on Hom objects is given by 
        \begin{equation*}
            \gamma(A_\bullet,A_\bullet'):MULT_k(A_\bullet,A_\bullet')=\mathcal M_c(A_\bullet,A_\bullet')\longrightarrow Q_c(Hoch(A_\bullet), Hoch(A_\bullet'))=\widetilde{MULT}_k(A_\bullet,A_\bullet')
        \end{equation*}
        for $A_\bullet,A_\bullet'\in MULT_k$.
    \end{Thm}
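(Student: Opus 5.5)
Having constructed the maps $\gamma(A_\bullet,A'_\bullet)$ in Lemma \ref{lem4.4}, it remains to check that they respect units and composition in the two enriched categories. Both checks run on the same principle: two coalgebra morphisms into a universal measuring coalgebra $Q_c(T,T')$ coincide as soon as the measurings they induce coincide. This is the uniqueness part of the universal property of $Q_c$, exactly as for $\mathcal M_c$ in Proposition \ref{Prop4.1}. Accordingly, I would express both sides of each required diagram as measurings from $Hoch(A_\bullet)$ to $Hoch(A''_\bullet)$, or to $Hoch(A_\bullet)$, and compare them componentwise.

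\textbf{Units.} The unit $k\to \mathcal M_c(A_\bullet,A_\bullet)$ classifies the measuring $1\mapsto \{id_{A_n}\}_{n\geq 0}$. Composing it with $\gamma(A_\bullet,A_\bullet)$ yields the measuring $1\mapsto Hoch^{\Phi}(\zeta(1))$, and by \eqref{pi3} this equals $\{C^{id}_*(1)=id_{C_*(A_n)}\}_{n\geq 0}$. That is precisely the measuring classified by the unit $k\to Q_c(Hoch(A_\bullet),Hoch(A_\bullet))$ of $\widetilde{MULT_k}$, where the composition and units are defined through the universal property of $Q_c$ in the same way as in Theorem \ref{T4.3xq}. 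Uniqueness then gives the unit axiom.

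\textbf{Composition.} Write $\Phi,\Phi'$ for the universal measurings on $\mathcal M_c(A_\bullet,A'_\bullet)$ and $\mathcal M_c(A'_\bullet,A''_\bullet)$, and $\Phi''$ for the one on $\mathcal M_c(A_\bullet,A''_\bullet)$. I need the two composites
\[
\mathcal M_c(A_\bullet,A'_\bullet)\otimes \mathcal M_c(A'_\bullet,A''_\bullet)\longrightarrow Q_c(Hoch(A_\bullet),Hoch(A''_\bullet))
\]
to agree. The first composite goes through $\mathcal M_c(A_\bullet,A''_\bullet)$, and the measuring it induces on $x\otimes x'$ is $Hoch^{\Phi''}(c(x\otimes x'))$. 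Since $\Phi''\circ c=\Phi'\circ\Phi$ by construction of $c$ in \eqref{eq4.4p}, this equals $Hoch^{\Phi'\circ\Phi}(x\otimes x')$. The second composite goes through $\gamma\otimes\gamma$ and then the composition of $\widetilde{MULT_k}$, so it induces
\[
x\otimes x'\mapsto Hoch^{\Phi'}(x')\circ Hoch^{\Phi}(x).
\]
In each degree $i$ and each chain degree $p$, this is a comparison of
\[
C^{(\Phi'\circ\Phi)_i}_p(x\otimes x')(a_0,\dots,a_p)=\sum\big(\Phi'_i(x'_{(j)})\Phi_i(x_{(j)})(a_{j-1})\big)_{j=1}^{p+1}
\]
with $C^{\Phi'_i}_p(x')\circ C^{\Phi_i}_p(x)$. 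These agree because the iterated coproduct of $x\otimes x'$ in $C\otimes C'$ is $\sum (x_{(1)}\otimes x'_{(1)})\otimes\cdots\otimes(x_{(p+1)}\otimes x'_{(p+1)})$. Uniqueness in the universal property of $Q_c$ then forces the two composites to be equal.

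\textbf{Main obstacle.} The only real subtlety is bookkeeping: one must make explicit that composition in $\widetilde{MULT_k}$ is the morphism classifying the composite measuring $Q_c\otimes Q_c\to \mathscr Gr(\mathcal C_k)(Hoch(A_\bullet),Hoch(A''_\bullet))$, as in Proposition \ref{Prop4.2} for graded algebras in $\mathcal C_k$. One must also check that the measuring induced by a composite coalgebra map is obtained by precomposition with the maps \eqref{4.10p}. Once this is in place, everything reduces to the coproduct identity above. Finally, $\gamma$ is the identity on objects by definition.
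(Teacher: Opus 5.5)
Your proposal is correct and follows the same route as the paper. Both use the uniqueness part of the universal property of $Q_c(Hoch(A_\bullet),Hoch(A''_\bullet))$ to reduce to checking that both composites agree after applying each $\Phi(C_*(A_i),C_*(A''_i))$, and both handle the unit via $\Delta^p(1)=1\otimes\cdots\otimes 1$ in $k$. You make explicit two things the paper compresses into the commutativity of the outer and lower rectangles in \eqref{rect412}: the iterated-coproduct computation in $C\otimes C'$, and the identity $Hoch^{\Phi''}\circ c=Hoch^{\Phi''\circ c}$, which holds because $c$ is a coalgebra map.
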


    \begin{proof}
        For  $A_\bullet,A_\bullet',A_\bullet''\in MULT_k$, we need to show that the  following diagram in  $CoCo\mathcal Alg_k$   is commutative
         \begin{equation}\label{diag4.8}
            \begin{tikzcd}[row sep=2.5em,  column sep=5em]
            \mathcal M_c(A_\bullet,A_\bullet')\otimes \mathcal M_c(A'_\bullet,A_\bullet'')
  \arrow[r, "\circ"]
  \arrow[d, "{\gamma(A_\bullet,A_\bullet')\otimes \gamma(A_\bullet',A_\bullet'')} "']
&
\mathcal M_c(A_\bullet,A_\bullet'')
  \arrow[d, "{\gamma(A_\bullet,A_\bullet'')}"]
\\
Q_c(Hoch(A_\bullet), Hoch(A_\bullet'))\otimes Q_c(Hoch(A_\bullet'), Hoch(A_\bullet''))
  \arrow[r,"\circ"']
&
Q_c(Hoch(A_\bullet), Hoch(A_\bullet''))
\end{tikzcd}
        \end{equation}
        Because of the universal property of $Q_c(Hoch(A_\bullet), Hoch(A_\bullet''))$, it is enough to show that the two compositions appearing in  \eqref{diag4.8} are equal when further composed with each of the maps  
\begin{equation} \Phi(C_*(A_i),C_*(A_i'')): Q_c(Hoch(A_\bullet), Hoch(A_\bullet''))\longrightarrow \mathcal C_k(C_*(A_i),C_*(A_i''))\qquad i\geq 0
\end{equation} Since all the maps appearing in \eqref{diag4.8} are determined by universal properties of measuring coalgebras, it follows  from the construction in Theorem \ref{Thm3.6} that the outer rectangle as well as the lower rectangle in the following diagram commutes.
\begin{equation}\label{rect412}
\begin{CD}
  \mathcal M_c(A_\bullet,A_\bullet')\otimes \mathcal M_c(A'_\bullet,A_\bullet'') @>\circ>> \mathcal M_c(A_\bullet,A_\bullet'')\\
@V\gamma(A_\bullet,A_\bullet')\otimes \gamma(A_\bullet',A_\bullet'')VV @VV\gamma(A_\bullet,A_\bullet'')V\\
Q_c(Hoch(A_\bullet), Hoch(A_\bullet'))\otimes Q_c(Hoch(A_\bullet'), Hoch(A_\bullet'')) @>\circ>>  Q_c(Hoch(A_\bullet), Hoch(A_\bullet''))\\ 
@V \Phi(C_*(A_i),C_*(A_i'))\otimes  \Phi(C_*(A_i'),C_*(A_i''))VV @VV \Phi(C_*(A_i),C_*(A_i''))V \\
 \mathcal C_k(C_*(A_i),C_*(A_i'))\otimes  \mathcal C_k(C_*(A'_i),C_*(A_i''))@>\circ>> \mathcal C_k(C_*(A_i),C_*(A_i''))\\
\end{CD}
\end{equation} for each $i\geq 0$. From the above reasoning, it now follows that \eqref{diag4.8} commutes. 

\smallskip
 Finally, when $k$ is treated as a coalgebra, we know that the $p$-th iterated coproduct gives $\Delta^p(1)={1\otimes 1\otimes \cdots \otimes 1}$ 
($(p+1)$-times) and it is clear from the construction in Theorem \ref{Thm3.6} that the composition $k\longrightarrow\mathcal M_c(A_\bullet,A_\bullet)\xrightarrow{\gamma(A_\bullet,A_\bullet)}Q_c(Hoch(A_\bullet),Hoch(A_\bullet))$ is equal to the canonical map $k\longrightarrow Q_c(Hoch(A_\bullet),Hoch(A_\bullet))$. This completes the proof.
    \end{proof}

\section{Bimodules over multiplicative sequences of algebras and Hochschild complexes}

Let $\left(A_\bullet=\{A_n\}_{n\geq 0},\tau=\{\tau_{m,n}:A_m\otimes A_n\longrightarrow A_{m+n}\}_{m,n\geq 0}\right)\in \mathscr Mult_k$. For  $m$, $n\geq 0$, we note that an $A_{m+n}$-bimodule $U$  may be treated as an $A_m\otimes A_n$-bimodule by restriction of scalars, applying the $k$-algebra morphism $\tau_{m,n}:A_m\otimes A_n\longrightarrow A_{m+n}$. 
Using this observation, we now introduce two notions, one of ``left bimodule'' and the other of ``right bimodule'' over a multiplicative sequence of algebras.

    \begin{defn}\label{D5.1kos}
            Let $(A_\bullet,\tau)\in \mathscr Mult_k$. A left $A_\bullet$-bimodule $(U_\bullet,\vartheta)$ consists of 
            \begin{enumerate}[(a)]
                \item a collection $U_\bullet=\{U_n\}_{n\geq 0}$, where each $U_n$ is an $A_n$-bimodule, and
                \item  a collection $\vartheta=\left\{\vartheta_{m,n}:A_m\otimes U_n\longrightarrow U_{m+n}\right\}_{m,n\geq 0}$, where each $\vartheta_{m,n}$ is a homomorphism of $(A_m\otimes A_n)$-bimodules such that the following diagram
                
               \begin{equation}\label{diag5.2p}
               \begin{tikzcd}[row sep=3em,  column sep=4em]
                  A_l\otimes A_m\otimes U_n\arrow[r, "A_l\otimes\vartheta_{m,n} "]
                   \arrow[d, "\tau_{l,m}\otimes U_n"']
                   & A_l\otimes U_{m+n}  \arrow[d, "\vartheta_{l,m+n}"] \\
                   A_{l+m} \otimes U_n\arrow[r, "\vartheta_{l+m,n}"']
                   &  U_{l+m+n}
               \end{tikzcd}
               \end{equation}
            commutes for all $l,m,n\geq 0$. For any $n\geq 0$, we always assume that $\vartheta_{0,n}=id:U_n\cong k\otimes U_n=A_0\otimes U_n\longrightarrow U_n $. 
            \end{enumerate}

            Let $(U_\bullet,\vartheta),(U_\bullet',\vartheta')$ be left $A_\bullet$-bimodules. A morphism $g_\bullet:(U_\bullet,\vartheta)\longrightarrow (U_\bullet',\vartheta')$ of left $A_\bullet$-bimodules is a collection $g_\bullet=\left\{g_n:U_n\longrightarrow U_n'\right\}_{n\geq 0}$, where each $g_n$ is a morphism of $A_n$-bimodules such that the following diagram  commutes
            \begin{equation}
            \begin{tikzcd}[row sep=2.5em,  column sep=3em]
                A_m \otimes U_n
                \arrow[r, "A_m \otimes g_{n}"]
                \arrow[d, "\vartheta_{m,n}"']
                &A_{m} \otimes U'_n
                \arrow[d, "\vartheta_{m,n}'"]\\
                U_{m+n} \arrow[r, "g_{m+n}"']
                &U'_{m+n}
            \end{tikzcd}
            \end{equation}
            for all $m,n\geq 0$. We denote the category of left $A_\bullet$-bimodules by $A_\bullet{\small{-}}BMod$. Similarly, we can define the category $BMod{\small{-}}A_\bullet$ of right $A_\bullet$-bimodules.
     \end{defn}

      We now present some examples of left bimodules over multiplicative sequences of algebras. Indeed, if $f_\bullet:(A_\bullet,\tau)\longrightarrow (A_\bullet',\tau')$ is a morphism in $\mathscr Mult_k$, then $A_\bullet'$  becomes a left $A_\bullet$-bimodule equipped with the following maps:

     \begin{equation}
         \vartheta_{m,n}:A_m\otimes A_n'\longrightarrow A_{m+n}'\qquad(a^m\otimes {a'}^n)\mapsto \tau_{m,n}'(f_m(a^m)\otimes {a'}^n)\quad \forall m,n\geq 0
     \end{equation}
    for $a^m\in A_m$ and ${a'}^n\in A_n'$.

    \begin{eg}
      \emph{  Let $(A_\bullet,\tau)\in\mathscr Mult_k$. Then, $A_\bullet$ is a left $A_\bullet$-bimodule equipped with the structure maps $\vartheta_{m,n}=\tau_{m,n}:A_m\otimes A_n\longrightarrow A_{m+n}$ for all $m,n\geq 0$. }
    \end{eg}

     \begin{eg}
        \emph{ Let $A$ be a commutative $k$-algebra with multiplication map $\mu:A\otimes A\longrightarrow A$. As in Example \ref{eg1}, we have $(A_\bullet,\tau)\in \mathscr{M}ult_k$ canonically associated to $A$, where $A_n=A$ for $n\geq 1$ and $\tau_{m,n}=\mu$ for $m$, $n\geq 1$. Let $U$ be an $A$-module with structure map $\vartheta: A\otimes U\longrightarrow  U$. Since $A$ is commutative, we may treat $U$ as an $A$-bimodule.   We may now verify that the collection $U_\bullet:=\{U_n=U\}_{n\geq 0}$ together with  $\left\{\vartheta_{m,n}=\vartheta: A\otimes U\longrightarrow U\right\}_{m\geq 1,n\geq 0}$ forms a left $A_\bullet$-bimodule.}
     \end{eg}

     \begin{eg}
        \emph{ For $n\geq 1$, let $B_n$ denote the braid group generated by the symbols $t_1,t_2,\ldots,t_{n-1}$ subject to the relations:}
         \begin{equation*}
             t_it_{i+1}t_i=t_{i+1}t_it_{i+1},\qquad t_it_j=t_jt_i\quad\textit{ for } \quad |i-j|>1,
         \end{equation*}
        \emph{and let $B_0$ be the trivial group. If $k[B_n]$ denotes the group algebra of $B_n$ over $k$, we know from \cite[\S 4.1]{DAV} that the collection $k[B_\bullet]=\{k[B_n]\}_{n\geq 0}$ forms an object of $\mathscr Mult_k$, with  structure maps $\tau_{m,n}:k[B_m]\otimes k[B_n]\longrightarrow k[B_{m+n}]$ determined by 
        \begin{equation*}
        t_i\otimes 1\mapsto t_i\qquad 1\otimes t_j\mapsto t_{j+m}
        \end{equation*}
        for $m$, $n\geq 1$.
       We now fix an element $0\ne q\in k$. Then for each $n\geq 1$, the quotient  $H_n(q):=k[B_n]/I_n$, where $I_n$ is the two sided ideal of $k[B_n]$ generated by the elements $(t_i-q)(t_i+q^{-1})$ with $1\leq i\leq n-1$, is called the Hecke algebra. In \cite[\S 4.1]{DAV} it is also shown that the collection $H_\bullet(q)=\{H_n(q)\}_{n\geq 0}$ with $H_0(q)=k$, forms a multiplicative sequence of algebras, whose structure maps are descended from those of $k[B_\bullet]$.  It follows that the collection $\left\{k[B_n]\longrightarrow k[B_n]/I_n=H_n(q)\right\}_{n\geq 0}$ of canonical maps forms a morphism in $\mathscr Mult_k$. This shows that $H_\bullet(q)$ is a left $k[B_\bullet]$-bimodule.}
     \end{eg}

    \begin{eg}
        \emph{ For $n\geq 1$, let $k[S_n]$ be the group algebra of the permutation group $S_n$ over $k$. We let $S_0$ be the trivial group. Then, we know from \cite[\S 3.1]{DAV} that the collection  $k[S_\bullet]:=\{k[S_n]\}_{n\geq 0}$ forms an object of $\mathscr Mult_k$, where the structure maps $ \tau_{m,n}:k[S_m] \otimes k[S_n]\longrightarrow k[S_{m+n}]$ are induced by the canonical embeddings $S_m\times S_n\hookrightarrow S_{m+n}$. For an associative $k$-algebra $A$, let $SA_n:=A^{\otimes n}\ast S_n$ denote the cross product algebra, 
        which coincides with $A^{\otimes n}\otimes k[S_n]$ as a vector space, but whose multiplication is determined by (see \cite[\S 5]{DAV})
        \begin{equation*}
        ((a_1\otimes ...\otimes a_n)\ast \sigma)((a_1'\otimes ...\otimes a_n')\ast \sigma')=(a_1a'_{\sigma^{-1}(1)}\otimes ...\otimes a_na'_{\sigma^{-1}(n)})\ast \sigma\sigma'
        \end{equation*}
        From \cite[$\S$ 5]{DAV}, we know that the collection $SA_\bullet:=\{SA_n\}_{n\geq 0}$ determines an object of $\mathscr Mult_k$. Furthermore, \cite[\S 5.2]{DAV} shows that for each $n\geq 0$, the algebra $k[S_n]$ is naturally embedded into the algebra $SA_n$ which defines a morphism from $k[S_\bullet]$ to $SA_\bullet$ in $\mathscr Mult_k$. Hence, $SA_\bullet$ is a left $k[S_\bullet]$-bimodule.}
        
    \end{eg}

      We now recall the notion of measuring comodules between modules over algebras  (see \cite{MB}, \cite{MH}). Let $A,A'$ be $k$-algebras, $U$ be a left $A$-module, and $U'$ be a left $A'$-module. If $(C,\Phi:C\longrightarrow Hom_k(A,A'))$ is a coalgebra measuring from $A$ to $A'$, then, a (left) measuring comodule over $(C,\Phi)$ from $U$ to $U'$ is a pair $(P,\Psi)$ consisting of
   \begin{enumerate}[(a)]
     \item a left $C$-comodule $P$, and 
     \item a $k$-linear map $\Psi: P\longrightarrow  {Hom}_k(U,U')$ such that for each $a\in A$, $u\in U$, $y\in P$, we have
     \begin{equation*}
         \Psi(y)(a\cdot u)=\sum\Phi(y_{(0)})(a)\cdot\Psi(y_{(1)})(u)
     \end{equation*}
     where  the coaction $\Delta_P:P\longrightarrow C\otimes P$ is given by $\Delta_P(y)=y_{(0)}\otimes y_{(1)}\in C\otimes P$ by suppressing the summation and  in Sweedler notation.
 \end{enumerate} 
  Our next aim is to define the notion of measuring comodule between left bimodules over multiplicative sequences of algebras. For this, we recall from \cite[Lemma 5.4]{AB2} that if $A$, $A'$ are $k$-algebras and $(C,\Phi)$ is a coalgebra measuring from $A$ to $A'$ with $C$ cocommutative, then there is an induced coalgebra measuring $(C,\Phi^e)$ between the enveloping algebras $A^e:=A\otimes A^{op}$ and $A'^e:=A'\otimes A'^{op}$ given by
     \begin{equation}
         \Phi^e: C\longrightarrow  {Hom}(A^e,A'^e)\qquad \Phi^e(x)(a_1\otimes a_2)\longmapsto \Phi(x_{(1)})(a_1)\otimes \Phi(x_{(2)})(a_2)
     \end{equation}
     for all $x\in C$  and $a_1,a_2\in A$. We know that an $A$-bimodule  $U$ becomes a  left $A^e=A\otimes A^{op}$-module by means of the action
  \begin{equation*}
    A^e\otimes U= A\otimes A^{op}\otimes U\longrightarrow U\qquad(a_1\otimes a_2)\cdot u=(a_1\cdot u\cdot a_2) 
  \end{equation*}
  for $a_1\in A,a_2\in A^{op}$ and $u\in U$. We first make the following definition.

 \begin{defn}\label{def5.4}
   Let $A,A'$ be $k$-algebras and $(C,\Phi:C\longrightarrow Hom_k(A,A'))$ be a coalgebra measuring from $A$ to $A'$, with $C$  cocommutative. Let $U$ be an $A$-bimodule and $U'$ be an $A'$-bimodule. A  measuring comodule $(P,\Psi)$ over $(C,\Phi)$ from $U$ to $U'$ consists of a left $C$-comodule $P$ and a linear map
$
\Psi: P\longrightarrow Hom_k(U,U')
$
 such that 
     $(P,\Psi)$ is a measuring comodule over $(C,\Phi^e:C\longrightarrow Hom_k(A^e,A'^e))$ from $U$ to $U'$, where $U$ and $U'$ are regarded as left modules over $A^e$ and $A'^e$ respectively.
 \end{defn}
\smallskip
 
 We are now ready  to introduce measuring comodules between left bimodules over multiplicative sequences of algebras.
 
\begin{defn}\label{defn5.8}
    Let $A_\bullet,A_\bullet'\in \mathscr Mult_k$ and let $(U_\bullet,\vartheta)\in A_\bullet{\small{-}}BMod,(U_\bullet',\vartheta')\in A'_\bullet{\small{-}}BMod$ be left bimodules. Let $(C,\Phi=\{\Phi_n:C\longrightarrow Hom_k(A_n,A_n')\}_{n\geq 0})$ be a coalgebra measuring from $A_\bullet$ to $A_\bullet'$, with $C$  cocommutative. Then, a measuring comodule $(P,\Psi)$ over $(C,\Phi)$ from $U_\bullet$ to $U_\bullet'$ consists of
    \begin{enumerate}[(a)]
        \item a left $C$-comodule $P$, and 
        \item a collection $\Psi=\left\{\Psi_n:P\longrightarrow  {Hom}_k(U_n,U_n')\right\}_{n\geq 0}$ of $k$-linear maps such that 
        \begin{enumerate}[(1)]
            \item for each $n\geq 0$, $(P,\Psi_n)$ is a measuring comodule over $(C,\Phi_n)$ from $U_n$ to $U_n'$. In other words, the pair $(P,\Psi_n)$ is a measuring comodule over $(C,\Phi_n^e: C\longrightarrow Hom_k(A_n^e,A_n'^e))$ from $U_n$ to $U_n'$, where $U_n$ and $U_n'$ are regarded as left modules over $A_n^e$ and $A_n'^e$ respectively, and
            \item  for each $y\in P$ the following diagram 
            \begin{equation}\label{diag5.10}
                \begin{tikzcd}[row sep=2.5em,  column sep=3em]
          A_m \otimes U_n
  \arrow[r, "\vartheta_{m,n}"]
  \arrow[d, "\sum \Phi_m(y_{(0)})\otimes\Psi_n(y_{(1)})"']
&
U_{m+n}
  \arrow[d, "\Psi_{m+n}(y)"]
\\
A'_{m}\otimes U'_n
  \arrow[r, "\vartheta'_{m,n}"']
&
U'_{m+n}
\end{tikzcd}
            \end{equation}
       commutes for all $m,n\geq 0$, where the coaction $\Delta_P:P\longrightarrow C\otimes P$ is given by $\Delta_P(y)=y_{(0)}\otimes y_{(1)}\in C\otimes P$.
        \end{enumerate}
    \end{enumerate}
\end{defn}
\smallskip

We will now construct universal measuring comodules between left bimodules over multiplicative sequences of algebras. Since $k$ is a field, the coalgebra $C$ is flat over $k$ and it follows (see \cite[\S 1]{RW}) that the category $C{\small{-}}\mathcal CoMod$  of left $C$-comodules is a Grothendieck category.  Since the forgetful functor $C{\small{-}}\mathcal CoMod\longrightarrow \mathcal Vect_k$ preserves limits, it must have a  right adjoint $\mathfrak R_C: \mathcal Vect_k\longrightarrow C{\small{-}}\mathcal CoMod$ (see \cite[Proposition 8.3.27]{MK}). In other words, for any $k$-vector space $V$ and any left $C$-comodule $P$, there is a natural isomorphism
\begin{equation}\label{5.11p}
    \mathcal Vect_k(P,V)\cong C{\small{-}}\mathcal CoMod(P,\mathfrak R_C(V)).
\end{equation}
We now have the following result. 
\begin{Thm}\label{Thm5.4}
    Let $A_\bullet,A_\bullet'\in \mathscr Mult_k$. Let $U_\bullet\in A_\bullet{\small{-}}BMod$, $U_\bullet'\in A'_\bullet{\small{-}}BMod$ and  let $(C,\Phi=\{\Phi_n:C\longrightarrow Hom_k(A_n,A_n')\}_{n\geq 0})$ be a coalgebra measuring from $A_\bullet$ to $A_\bullet'$, where $C$ is cocommutative. Then, there exists a left $C$-comodule $\mathcal P_C(U_\bullet,U_\bullet')$ and a measuring comodule \begin{equation}
        \left(\mathcal P_C(U_\bullet,U_\bullet'),\Psi=\{\Psi_n:\mathcal P_C(U_\bullet,U_\bullet')\longrightarrow  {Hom}_k(U_n,U_n')\}_{n\geq 0}\right)
    \end{equation} 
     over $(C,\Phi)$ from $U_\bullet$ to $U_\bullet'$ satisfying the following universal property: for any measuring comodule $(P,\daleth=\{\daleth_n:P\longrightarrow Hom_k(U_n,U_n')\}_{n\geq 0})$ over $(C,\Phi)$ from $U_\bullet$ to $U_\bullet'$, there exists a unique morphism $\eta:P\longrightarrow \mathcal P_C(U_\bullet,U_\bullet')$ of left $C$-comodules such that the following diagram 
    \begin{equation*}
         \begin{tikzpicture}[>=stealth]
\node (A) at (0,0) {$P$};
\node (B) at (2.0,1.5) {$\mathcal P_C(U_\bullet,U_\bullet')$};
\node (C) at (4.7,0) {$ {Hom}_k(U_n,U_n')$};

\draw[->, dashed] (A) -- node[above left] {$\eta$} (B);
\draw[->] (A) -- node[below left] {$\daleth_n$} (C);
\draw[->] (B) -- node[right] {$\Psi_n$} (C);
\end{tikzpicture}
    \end{equation*}
    commutes for each  $n\geq 0$.
\end{Thm}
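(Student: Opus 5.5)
We mimic the proof of Proposition \ref{Prop4.1}, replacing the cofree coalgebra by the cofree comodule. Let $V:=\prod_{n\geq 0}Hom_k(U_n,U_n')$ with projections $\pi_n:V\longrightarrow Hom_k(U_n,U_n')$. The adjunction \eqref{5.11p} gives the cofree left $C$-comodule $\mathfrak R_C(V)$ and a counit $\pi(V):\mathfrak R_C(V)\longrightarrow V$ in $\mathcal Vect_k$. We define $\mathcal P_C(U_\bullet,U_\bullet')$ to be the sum $\sum N$ over all $C$-subcomodules $N\subseteq \mathfrak R_C(V)$ such that the maps $\{(\pi_n\circ\pi(V))|_N\}_{n\geq 0}$ form a measuring comodule over $(C,\Phi)$ from $U_\bullet$ to $U_\bullet'$ in the sense of Definition \ref{defn5.8}. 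We set $\Psi_n:=(\pi_n\circ\pi(V))|_{\mathcal P_C(U_\bullet,U_\bullet')}$.

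\textbf{Step 1: the sum is itself a measuring comodule.} A sum of subcomodules is a subcomodule, since it is the image of the direct sum. Both defining conditions are linear in $y$: condition (1) of Definition \ref{defn5.8} (measuring over $\Phi_n^e$) and the square \eqref{diag5.10}. Each involves only $y$ and its coaction components $y_{(0)}\otimes y_{(1)}$, and for $y\in N$ these lie in $C\otimes N$ because $N$ is a subcomodule. Hence if each $N$ satisfies the conditions, so does their sum: an element of the sum is a finite sum $y=\sum y_j$ with $y_j\in N_j$, and we verify the conditions termwise. This is the one point requiring care. The coaction must be computed in $\mathfrak R_C(V)$, and it restricts compatibly to every $N$, so no ambiguity arises.

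\textbf{Step 2: universality.} Let $(P,\daleth)$ be a measuring comodule over $(C,\Phi)$. The family $\{\daleth_n\}$ defines a linear map $\daleth:P\longrightarrow V$. By \eqref{5.11p} this corresponds to a unique comodule morphism $\eta:P\longrightarrow \mathfrak R_C(V)$ with $\pi(V)\circ\eta=\daleth$, so that $\pi_n\circ\pi(V)\circ\eta=\daleth_n$.

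The image $\eta(P)$ is a subcomodule. We check that it is a measuring comodule. Each element of $\eta(P)$ has the form $\eta(y)$, and its coaction is $(C\otimes\eta)(\Delta_P(y))=y_{(0)}\otimes\eta(y_{(1)})$. The structure maps evaluate as $(\pi_n\pi(V))(\eta(y))=\daleth_n(y)$. So both conditions for $\eta(P)$ reduce to those for $(P,\daleth)$, which hold by assumption. Therefore $\eta(P)\subseteq\mathcal P_C(U_\bullet,U_\bullet')$, and $\eta$ factors through the inclusion, making the triangle commute for each $n$.

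\textbf{Uniqueness.} Any comodule map $\eta':P\longrightarrow\mathcal P_C(U_\bullet,U_\bullet')$ with $\Psi_n\circ\eta'=\daleth_n$ for all $n$ becomes, after composing with the inclusion into $\mathfrak R_C(V)$, a comodule map whose composite with $\pi(V)$ is $\daleth$. By the bijection \eqref{5.11p} it equals $\eta$. Since the inclusion is injective, $\eta'=\eta$.

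The main obstacle is Step 1. It requires checking that condition (1) of Definition \ref{defn5.8}, through $\Phi_n^e$ and the cocommutativity of $C$, is genuinely a linear condition on elements with their coaction, so that it is preserved under sums and images. Once written in Sweedler notation this is routine.
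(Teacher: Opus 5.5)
Your proposal is correct and follows the same route as the paper. You take $\mathcal P_C(U_\bullet,U_\bullet')$ to be the sum of all $C$-subcomodules of the cofree comodule $\mathfrak R_C\bigl(\prod_n Hom_k(U_n,U_n')\bigr)$ on which the restricted projections form a measuring comodule, and you get universality by showing that the image of the adjoint comodule map $\eta$ lies in this sum. You also supply details the paper dismisses as clear: the defining conditions are linear in $y$ and hence closed under sums, and uniqueness follows from the bijection \eqref{5.11p} together with injectivity of the inclusion. One small correction: cocommutativity of $C$ plays no role in that linearity.
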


\begin{proof}
      We set $H:=\underset{n\geq 0}{\prod}{Hom}_k(U_n,U_n')$, together with canonical projections $\pi_n:H\longrightarrow  {Hom}_k(U_n,U_n')$ for all $n\geq 0$. By the adjunction in \eqref{5.11p}, we obtain the left $C$-comodule $\mathfrak R_C(H)$ and the canonical morphism $\Lambda(H):\mathfrak R_C(H)\longrightarrow  H$ of vector spaces. We now set $\mathcal P_C(U_\bullet, U_\bullet'):=\sum Q$, where the sum is taken over all left $C$-subcomodules $Q$ of $\mathfrak R_C(H)$ such that $\Big\{(\pi_n\circ\Lambda(H))\big|_{Q}:Q\longrightarrow Hom_k(U_n,U_n')\Big\}_{n\geq 0}$ is a measuring comodule over $(C,\Phi)$ from $U_\bullet$ to $U_\bullet'$. It is  clear that $\mathcal P_C(U_\bullet, U_\bullet')$ is a left $C$-comodule, and the collection $\Psi:=\left\{\Psi_n:=(\pi_n\circ\Lambda(H))\big|_{\mathcal P_C(U_\bullet, U_\bullet')}:\mathcal P_C(U_\bullet, U_\bullet')\longrightarrow  {Hom}_k(U_n,U_n')\right\}_{n\geq 0}$ determines a measuring comodule over $(C,\Phi)$ from $U_\bullet$ to $U_\bullet'$.
      \smallskip 
      
      We now consider a measuring comodule $(P,\daleth=\{\daleth_n: P\longrightarrow Hom_k(U_n, U_n')\}_{n\geq 0})$ over $(C,\Phi)$ from $U_\bullet$ to $U_\bullet'$. From the adjunction in \eqref{5.11p}, we note that the map $\daleth=\{\daleth_n\}_{n\geq 0}:P\longrightarrow   H=\underset{n\geq 0}{\prod}Hom_k(U_n,U_n')$ in $ \mathcal Vect_k$ corresponds to a left $C$-comodule morphism $\eta: P\longrightarrow \mathfrak R_C(H)$ such that $\Lambda(H)\circ \eta=\daleth$. Therefore, for each $n\geq 0$, we have the following commutative diagram 

          \begin{equation}\label{5.13digu}
        \begin{array}{c}
\begin{tikzcd}[column sep=large, row sep=large]
& P \arrow[dl, dashed, "\eta"'] 
    \arrow[d, "\daleth"] 
    \arrow[dr, "{\daleth_n}"] & \\
\mathfrak R_C(H) \arrow[r, "\Lambda(H)"] 
& H \arrow[r, "\pi_n"] 
& { {Hom}}_k(U_n,U_n')
\end{tikzcd}
\end{array}
\end{equation}
    From \eqref{5.13digu},  it follows  that $\left\{(\pi_n\circ\Lambda(H))\big|_{\eta(P)}:\eta(P)\longrightarrow Hom_k(U_n,U_n')\right\}_{n\geq 0}$ is a measuring comodule over $(C,\Phi)$ from $U_\bullet$ to $U_\bullet'$. Hence, $\eta(P)\subseteq \mathcal P_C(U_\bullet,U_\bullet')$. The result is now clear.
\end{proof}

\smallskip

 \begin{thm}\label{prop5.11}
     Let $A_\bullet,A_\bullet',A_\bullet''\in \mathscr Mult_k$. Let $(U_\bullet,\vartheta),(U'_\bullet,\vartheta')$ and $(U''_\bullet,\vartheta'')$ be left bimodules over $A_\bullet,A_\bullet'$ and $A_\bullet''$, respectively. Suppose we have
     \begin{enumerate}[(a)]
         \item a coalgebra measuring $(C,\Phi=\{\Phi_n:C\longrightarrow Hom_k(A_n,A_n')\}_{n\geq 0})$, where $C$ is cocommutative, and a measuring comodule $(P,\Psi=\{\Psi_n:P\longrightarrow Hom_k(U_n,U_n')\}_{n\geq 0})$ over $(C,\Phi)$ from $U_\bullet$ to $U_\bullet'$,
         \item a coalgebra measuring $(C',\Phi'=\{\Phi'_n:C'\longrightarrow Hom_k(A'_n,A_n'')\}_{n\geq 0})$ with $C'$ cocommutative, and a measuring comodule $(P',\Psi'=\{\Psi'_n:P'\longrightarrow Hom_k(U'_n,U_n'')\}_{n\geq 0})$ over $(C',\Phi')$ from $U'_\bullet$ to $U_\bullet''$.
     \end{enumerate}
     Then, the pair $(P\otimes P',\Psi'\circ \Psi:=\{(\Psi'\circ \Psi)_n\}_{n\geq 0})$ determines a measuring comodule over $(C\otimes C',\Phi'\circ \Phi=\{(\Phi'\circ\Phi)_n:C\otimes C'\longrightarrow Hom_k(A_n,A_n'')\}_{n\geq 0})$ from $U_\bullet$ to $U_\bullet''$, where
     \begin{equation}\label{eq5.18}
         (\Psi'\circ \Psi)_n: P\otimes P'\xrightarrow[]{\quad\Psi_n\otimes \Psi'_n\quad}Hom_k(U_n,U_n')\otimes Hom_k(U_n',U_n'')\xrightarrow[]{\quad\circ\quad}Hom_k(U_n,U_n'')\quad\forall n\geq 0
     \end{equation}
 \end{thm}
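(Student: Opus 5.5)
We check the three requirements in the definition of a measuring comodule in turn. The first is the comodule structure on $P\otimes P'$. The second is the levelwise measuring condition (b)(1) of Definition \ref{defn5.8}. The third is the compatibility \eqref{diag5.10} with the structure maps $\vartheta$, $\vartheta''$.

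First, $P\otimes P'$ is a left $C\otimes C'$-comodule via
\begin{equation*}
y\otimes y'\mapsto (y_{(0)}\otimes y'_{(0)})\otimes (y_{(1)}\otimes y'_{(1)}).
\end{equation*}
By Proposition \ref{Prop4.2}, $(C\otimes C',\Phi'\circ\Phi)$ is a coalgebra measuring from $A_\bullet$ to $A''_\bullet$. Note that $C\otimes C'$ is cocommutative, so the setting of Definition \ref{defn5.8} applies.

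For condition (b)(1), fix $n$ and work with the enveloping algebras. We first check that $(\Phi'\circ\Phi)_n^e=(\Phi'_n)^e\circ\Phi_n^e$ as maps $C\otimes C'\to Hom_k(A_n^e,A_n''^e)$. Evaluated on $(x\otimes x')$ and $a_1\otimes a_2$, the left side is $\Phi'_n(x'_{(1)})\Phi_n(x_{(1)})(a_1)\otimes \Phi'_n(x'_{(2)})\Phi_n(x_{(2)})(a_2)$, which is exactly the right side. Then, for $\alpha\in A_n^e$ and $u\in U_n$, we compute
\begin{equation*}
\Psi'_n(y')\Psi_n(y)(\alpha u)=\Psi'_n(y')\big(\Phi^e_n(y_{(0)})(\alpha)\,\Psi_n(y_{(1)})(u)\big)=\Phi'^e_n(y'_{(0)})\Phi^e_n(y_{(0)})(\alpha)\cdot\Psi'_n(y'_{(1)})\Psi_n(y_{(1)})(u),
\end{equation*}
which is the measuring comodule identity for $(\Psi'\circ\Psi)_n$ over $(\Phi'\circ\Phi)_n^e$ with respect to the tensor coaction. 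The unit condition, if it is required, is handled the same way.

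For condition (b)(2), take $a^m\in A_m$ and $u\in U_n$, and apply \eqref{diag5.10} first for $(P,\Psi)$ and then for $(P',\Psi')$:
\begin{align*}
\Psi'_{m+n}(y')\Psi_{m+n}(y)\big(\vartheta_{m,n}(a^m\otimes u)\big)&=\Psi'_{m+n}(y')\,\vartheta'_{m,n}\big(\Phi_m(y_{(0)})(a^m)\otimes\Psi_n(y_{(1)})(u)\big)\\
&=\vartheta''_{m,n}\big(\Phi'_m(y'_{(0)})\Phi_m(y_{(0)})(a^m)\otimes\Psi'_n(y'_{(1)})\Psi_n(y_{(1)})(u)\big).
\end{align*}
By \eqref{eq4.3p} and \eqref{eq5.18}, the last expression equals $\vartheta''_{m,n}\big((\Phi'\circ\Phi)_m((y\otimes y')_{(0)})(a^m)\otimes(\Psi'\circ\Psi)_n((y\otimes y')_{(1)})(u)\big)$, which is the required compatibility \eqref{diag5.10}.

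The only real obstacle is bookkeeping. We must make sure the coaction on $P\otimes P'$ is chosen so that the $C$- and $C'$-components pair with the matching factors. We must also confirm that the enveloping measuring of the composite agrees with the composite of the enveloping measurings; this uses only cocommutativity and the definitions, with no rearrangement of Sweedler indices beyond the obvious one.
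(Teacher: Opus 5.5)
Your proof is correct and takes the same route as the paper. You equip $P\otimes P'$ with the tensor coaction, invoke Proposition \ref{Prop4.2} for $\Phi'\circ\Phi$, and verify the levelwise enveloping-algebra condition and the compatibility with $\vartheta,\vartheta''$ directly. The paper only asserts these checks go through ``in a manner similar to'' Proposition \ref{Prop4.2}; your computations, including the identity $(\Phi'\circ\Phi)_n^e=\Phi'^e_n\circ\Phi^e_n$, supply the omitted details.
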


\begin{proof}
By Proposition \ref{Prop4.2}, we know that  $\Phi'\circ \Phi=\{(\Phi'\circ \Phi)_n:C\otimes C'\longrightarrow Hom_k(A_n,A_n'')\}_{n\geq 0}$ is a coalgebra measuring from $A_\bullet$ to $A_\bullet''$. For $y\in P$, $y'\in P'$, we know that the coaction $\Delta_{P\otimes P'}:P\otimes P'\longrightarrow (C\otimes C')
\otimes (P\otimes P')$  is given by  $\Delta_{P\otimes P'}(y\otimes y')=(y_{(0)}\otimes y'_{(0)})\otimes (y_{(1)}\otimes y'_{(1)})$, where $\Delta_P(y)=y_{(0)}\otimes y_{(1)}$ and $\Delta_{P'}(y')=y'_{(0)}\otimes y'_{(1)}$ are the respective coactions on $P$ and $P'$ respectively. In a manner similar to the proof of 
Proposition \ref{Prop4.2}, it can now be verified that $(P\otimes P',\Psi'\circ \Psi:=\{(\Psi'\circ \Psi)_n\}_{n\geq 0})$ as defined in \eqref{eq5.18} satisfies the conditions for being a measuring comodule over $(C\otimes C',\Phi'\circ \Phi)$ from $U_\bullet$ to $U_\bullet''$.  

\end{proof}

\begin{cor}\label{cor5.12p}
    Under the same assumptions as in Proposition \ref{prop5.11}, there is a canonical morphism of left $(C\otimes C')$-comodules 
    \begin{equation}
        \mathcal P_C(U_\bullet,U_\bullet')\otimes \mathcal P_{C'}(U'_\bullet,U_\bullet'')\longrightarrow \mathcal P_{C\otimes C'}(U_\bullet,U_\bullet'').
    \end{equation}
\end{cor}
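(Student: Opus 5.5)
The plan is to obtain the map from the universal property in Theorem \ref{Thm5.4}, applied to the coalgebra measuring $(C\otimes C',\Phi'\circ\Phi)$ from $A_\bullet$ to $A''_\bullet$. By Proposition \ref{Prop4.2}, $(C\otimes C',\Phi'\circ\Phi)$ is a coalgebra measuring, and $C\otimes C'$ is cocommutative because $C$ and $C'$ are. Hence the universal measuring comodule $\mathcal P_{C\otimes C'}(U_\bullet,U''_\bullet)$ with its structure maps $\Psi''=\{\Psi''_n\}_{n\geq 0}$ exists and has the stated universal property.

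First, I apply Theorem \ref{Thm5.4} twice. This gives universal measuring comodules $(\mathcal P_C(U_\bullet,U'_\bullet),\Psi)$ over $(C,\Phi)$ and $(\mathcal P_{C'}(U'_\bullet,U''_\bullet),\Psi')$ over $(C',\Phi')$. These satisfy the hypotheses (a) and (b) of Proposition \ref{prop5.11} with $P=\mathcal P_C(U_\bullet,U'_\bullet)$ and $P'=\mathcal P_{C'}(U'_\bullet,U''_\bullet)$.

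Second, Proposition \ref{prop5.11} shows that
\[
\bigl(\mathcal P_C(U_\bullet,U'_\bullet)\otimes \mathcal P_{C'}(U'_\bullet,U''_\bullet),\ \Psi'\circ\Psi\bigr)
\]
is a measuring comodule over $(C\otimes C',\Phi'\circ\Phi)$ from $U_\bullet$ to $U''_\bullet$. Here the tensor product is a left $(C\otimes C')$-comodule via the diagonal coaction $y\otimes y'\mapsto (y_{(0)}\otimes y'_{(0)})\otimes(y_{(1)}\otimes y'_{(1)})$.

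Third, the universal property in Theorem \ref{Thm5.4} for $\mathcal P_{C\otimes C'}(U_\bullet,U''_\bullet)$ produces a unique morphism of left $(C\otimes C')$-comodules
\[
\eta:\mathcal P_C(U_\bullet,U'_\bullet)\otimes \mathcal P_{C'}(U'_\bullet,U''_\bullet)\longrightarrow \mathcal P_{C\otimes C'}(U_\bullet,U''_\bullet)
\]
with $\Psi''_n\circ\eta=(\Psi'\circ\Psi)_n$ for all $n\geq 0$. This $\eta$ is the desired map. It is canonical because it is the unique map with this property, and that property involves only the structure maps of the universal objects.

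All the real work is in Proposition \ref{prop5.11}, which may be assumed. The only remaining point is to check that the comodule structure on the tensor product used in Proposition \ref{prop5.11} is the one assumed in Theorem \ref{Thm5.4}, namely an ordinary left $(C\otimes C')$-comodule. Since $\eta$ then comes straight from the universal property, no further computation is needed.
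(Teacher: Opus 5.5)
Your proposal is correct and follows essentially the same route as the paper: you apply Proposition \ref{prop5.11} to the universal measuring comodules $\mathcal P_C(U_\bullet,U'_\bullet)$ and $\mathcal P_{C'}(U'_\bullet,U''_\bullet)$. You then invoke the universal property of $\mathcal P_{C\otimes C'}(U_\bullet,U''_\bullet)$ from Theorem \ref{Thm5.4}, using that $C\otimes C'$ is cocommutative, to obtain the unique comodule map.
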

\begin{proof}
    This follows directly from Proposition \ref{prop5.11} by applying the universal property of $\mathcal P_{C\otimes C'}(U_\bullet,U_\bullet'')$ in Theorem 
\ref{Thm5.4}.
\end{proof}

    We now consider the ``global category of comodules'' $CoMod^c_k$ whose objects are pairs $(C,P)$, where $C$ is a cocommutative $k$-coalgebra and $P$ is a $C$-comodule.   A morphism $(f,g):(C,P)\longrightarrow (C',P')$ in $CoMod^c_k$ consists of a $k$-coalgebra map $f:C\longrightarrow C'$ and a morphism $g:P\longrightarrow P'$ of left $C'$-comodules, where $P$ is treated as a $C'$-comodule by corestriction of scalars along $f$.   For $(C,P),(C',P')\in CoMod^c_k$, the operation $(C,P)\otimes(C',P'):=(C\otimes C',P\otimes P')$ makes $CoMod^c_k$ a symmetric monoidal category. We will now construct a category $BMod_{\mathscr Mult_k}$ that is enriched over $CoMod^c_k$.

\begin{Thm}\label{ils5.11}
    Let $BMod_{\mathscr Mult_k}$ be the category whose objects are pairs $(A_\bullet,U_\bullet)$, where $A_\bullet$ is a multiplicative sequence of algebras and $U_\bullet$ is a left $A_\bullet$-bimodule, and whose Hom objects are   
        \begin{equation}
            BMod_{\mathscr Mult_k}((A_\bullet,U_\bullet),(A_\bullet',U_\bullet')):=(\mathcal M_c(A_\bullet,A_\bullet'),\mathcal P_{\mathcal M_c(A_\bullet,A_\bullet')}(U_\bullet,U_\bullet'))\in CoMod^c_k
        \end{equation}
        for $(A_\bullet,U_\bullet),(A_\bullet',U_\bullet')\in BMod_{\mathscr Mult_k}$. Then $BMod_{\mathscr Mult_k}$ is enriched over $CoMod^c_k$.
\end{Thm}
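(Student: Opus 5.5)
To show that $BMod_{\mathscr Mult_k}$ is enriched over $CoMod^c_k$, we must supply composition morphisms and identity morphisms in $CoMod^c_k$ and check the associativity and unit axioms. We follow the pattern of Theorem \ref{T4.3xq}, now applied to pairs.

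\textbf{Composition.} Write $M=\mathcal M_c(A_\bullet,A'_\bullet)$, $M'=\mathcal M_c(A'_\bullet,A''_\bullet)$ and $M''=\mathcal M_c(A_\bullet,A''_\bullet)$, with their universal measurings $\Phi,\Phi',\Phi''$.

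1. On the coalgebra component, take the map $c:M\otimes M'\to M''$ from \eqref{eq4.4p}. It is characterized by $\Phi''_n\circ c=(\Phi'\circ\Phi)_n$ for all $n$.
2. By Proposition \ref{prop5.11}, $(\mathcal P_M(U_\bullet,U'_\bullet)\otimes \mathcal P_{M'}(U'_\bullet,U''_\bullet),\Psi'\circ\Psi)$ is a measuring comodule over $(M\otimes M',\Phi'\circ\Phi)$. Corollary \ref{cor5.12p} then gives a morphism of $M\otimes M'$-comodules into $\mathcal P_{M\otimes M'}(U_\bullet,U''_\bullet)$, universal with respect to the measuring $\Phi'\circ\Phi$.
3. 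We still need to land in $\mathcal P_{M''}(U_\bullet,U''_\bullet)$. Corestrict the coaction of $P\otimes P'$ along $c$ to make it an $M''$-comodule. Since $\Phi''_n\circ c=(\Phi'\circ\Phi)_n$, the measuring comodule condition over $(M\otimes M',\Phi'\circ\Phi)$ becomes literally the condition over $(M'',\Phi'')$: both the $A^e_n$-linearity twist and diagram \eqref{diag5.10} involve $\Phi$ only through $y_{(0)}$. So $P\otimes P'$ is a measuring comodule over $(M'',\Phi'')$, and Theorem \ref{Thm5.4} yields a unique $M''$-comodule map $P\otimes P'\to \mathcal P_{M''}(U_\bullet,U''_\bullet)$.
4. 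Together with $c$, this is a morphism $(M,P)\otimes(M',P')\to(M'',\mathcal P_{M''})$ in $CoMod^c_k$.

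\textbf{Identity.} The unit object of $CoMod^c_k$ is $(k,k)$. Take the unit $k\to\mathcal M_c(A_\bullet,A_\bullet)$ from Theorem \ref{T4.3xq}. The identity maps $k\to Hom_k(U_n,U_n)$, $1\mapsto id$, form a measuring comodule over the identity measuring; the conditions hold trivially since $\vartheta$ is unchanged. Corestricting $k$ along the unit map and applying Theorem \ref{Thm5.4} gives $k\to\mathcal P_{\mathcal M_c(A_\bullet,A_\bullet)}(U_\bullet,U_\bullet)$.

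\textbf{Axioms.} Associativity and unitality are checked by uniqueness in the universal properties. For the coalgebra components they already hold by Theorem \ref{T4.3xq}. For the comodule components, both routes of the associativity square are $\mathcal M_c(A_\bullet,A'''_\bullet)$-comodule maps into $\mathcal P$. Composed with the universal $\Psi'''_n$, each route gives $y\otimes y'\otimes y''\mapsto \Psi''_n(y'')\circ\Psi'_n(y')\circ\Psi_n(y)$, by associativity of composition of linear maps. Uniqueness in Theorem \ref{Thm5.4} then forces the two routes to agree, and the unit laws follow the same way.

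\textbf{Main obstacle.} The delicate step is the bookkeeping in item 3 and in the associativity check. One must confirm that corestriction along the coalgebra maps is compatible at each stage, so that both composites are genuinely comodule maps over the same coalgebra $\mathcal M_c(A_\bullet,A'''_\bullet)$ before uniqueness can be invoked. I would verify this by checking that corestriction along $c\circ(c\otimes id)=c\circ(id\otimes c)$ agrees with corestricting in two steps, which holds because these coalgebra maps coincide by Theorem \ref{T4.3xq}.
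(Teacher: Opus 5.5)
Your proposal is correct and follows the paper's proof: the coalgebra composition from Theorem~\ref{T4.3xq}, corestriction of the tensor product of universal comodules along it, the universal property of $\mathcal P_{\mathcal M_c(A_\bullet,A''_\bullet)}(U_\bullet,U''_\bullet)$, and the same unit construction via $t\mapsto t\cdot id_{U_n}$. There are two small differences. You apply the universal property of $\mathcal P_{M''}$ directly to $\mathcal P_M(U_\bullet,U'_\bullet)\otimes\mathcal P_{M'}(U'_\bullet,U''_\bullet)$, whereas the paper first maps into $\mathcal P_{M\otimes M'}(U_\bullet,U''_\bullet)$ via Corollary~\ref{cor5.12p}; the resulting maps agree by uniqueness. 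You also verify associativity and unitality explicitly, which the paper leaves implicit.
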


\begin{proof}
    Let $(A_\bullet,U_\bullet),(A_\bullet',U_\bullet')\in BMod_{\mathscr Mult_k}$. By definition, it is clear that the Hom object $(\mathcal M_c(A_\bullet, A_\bullet'),\mathcal P_{\mathcal M_c(A_\bullet, A_\bullet')}(U_\bullet, U_\bullet'))\in CoMod^c_k$. Let $(A_\bullet,U_\bullet),(A_\bullet',U_\bullet'),(A''_\bullet,U_\bullet'')\in BMod_{\mathscr Mult_k}$. By putting $C=\mathcal M_c(A_\bullet, A_\bullet')$ and $C'=\mathcal M_c(A'_\bullet, A_\bullet'')$ in  Corollary \ref{cor5.12p},  we obtain a morphism 
    \begin{equation}\label{5.17yl}
        \mathcal P_{\mathcal M_c(A_\bullet, A_\bullet')}(U_\bullet,U_\bullet')\otimes \mathcal P_{\mathcal M_c(A'_\bullet, A_\bullet'')}(U'_\bullet,U_\bullet'')\longrightarrow \mathcal P_{\mathcal M_c(A_\bullet, A_\bullet')\otimes \mathcal M_c(A'_\bullet, A_\bullet'')}(U_\bullet,U_\bullet'')
    \end{equation}
    of $\mathcal M_c(A_\bullet, A_\bullet')\otimes \mathcal M_c(A'_\bullet, A_\bullet'')$-comodules. We observe that the map in 
\eqref{5.17yl}   can be treated as a morphism of $\mathcal M_c(A_\bullet, A_\bullet'')$-comodules, using the map $\mathcal M_c(A_\bullet,A_\bullet')\otimes \mathcal M_c(A_\bullet',A_\bullet'')\longrightarrow \mathcal M_c(A_\bullet,A_\bullet'')$
    of $k$-coalgebras. This gives us the following map in $CoMod_k^c$
    \begin{equation}\label{eq5.22}
        \left(\mathcal M_c(A_\bullet,A_\bullet')\otimes \mathcal M_c(A'_\bullet,A_\bullet''),\mathcal P_{\mathcal M_c(A_\bullet,A_\bullet')}(U_\bullet,U_\bullet')\otimes \mathcal P_{\mathcal M_c(A'_\bullet,A_\bullet'')}(U'_\bullet,U_\bullet'')\right)\longrightarrow
\left(\mathcal M_c(A_\bullet,A_\bullet''),\mathcal P_{\mathcal M_c(A_\bullet,A_\bullet')\otimes \mathcal M_c(A'_\bullet,A_\bullet'')}(U_\bullet,U_\bullet'')\right).
    \end{equation}
    By defintion,  the $k$-coalgebra map $\mathcal M_c(A_\bullet,A_\bullet')\otimes \mathcal M_c(A_\bullet',A_\bullet'')\longrightarrow \mathcal M_c(A_\bullet,A_\bullet'')$ arises from the universal property of $\mathcal M_c(A_\bullet, A_\bullet'')$ determined by Proposition \ref{Prop4.1}. By the commutativity of \eqref{42digu}, it is now clear that  the collection $\{\mathcal P_{{\mathcal M_c(A_\bullet, A_\bullet')}\otimes {\mathcal M_c(A'_\bullet, A_\bullet'')}}(U_\bullet,U_\bullet'')\longrightarrow Hom_k(U_n,U_n'')\}_{n\geq 0}$ determines a measuring comodule over $\mathcal M_c(A_\bullet,A_\bullet'')$. By the universal property of $\mathcal P_{\mathcal M_c(A_\bullet,A_\bullet'')}(U_\bullet,U_\bullet'')$ in Theorem \ref{Thm5.4}, we now have the following morphism
    \begin{equation}\label{5.22p}
       \left(\mathcal M_c(A_\bullet, A_\bullet''),\mathcal P_{\mathcal M_c(A_\bullet, A_\bullet')\otimes \mathcal M_c(A'_\bullet, A_\bullet'')}(U_\bullet,U_\bullet'')\right)\longrightarrow \left(\mathcal M_c(A_\bullet, A_\bullet''),\mathcal P_{\mathcal M_c(A_\bullet, A_\bullet'')}(U_\bullet,U_\bullet'')\right)
    \end{equation}
    in $CoMod_k^c$. Combining \eqref{eq5.22} and \eqref{5.22p}, we  have the required composition 
\begin{equation} BMod_{\mathscr Mult_k}((A_\bullet,U_\bullet),(A_\bullet',U_\bullet'))
\otimes
BMod_{\mathscr Mult_k}((A'_\bullet,U'_\bullet),(A_\bullet'',U_\bullet''))\longrightarrow {BMod}_{\mathscr Mult_k}((A_\bullet,U_\bullet),(A_\bullet'',U_\bullet''))
\end{equation} 
 Finally, let $(A_\bullet, U_\bullet)\in BMod_{\mathscr Mult_k}$. We know from Theorem \ref{T4.3xq} that there is a morphism $k\longrightarrow \mathcal M_c(A_\bullet,A_\bullet)$ of $k$-coalgebras, which makes $k$ a left $\mathcal M_c(A_\bullet,A_\bullet)$-comodule. 
    It is easy to check that the collection $\{\Psi_n:k\longrightarrow Hom_k(U_n,U_n)\}_{n\geq 0}$, where $\Psi_n(t):=t\cdot id_{U_n}$, forms a measuring comodule, when $k$ is treated as a left $\mathcal M_c(A_\bullet,A_\bullet)$-comodule. By the universal property of $\mathcal P_{\mathcal M_c(A_\bullet,A_\bullet)}(U_\bullet,U_\bullet)$, we now have a morphism
    \begin{equation}
       (k,k)\longrightarrow \left(\mathcal M_c(A_\bullet,A_\bullet),\mathcal P_{\mathcal M_c(A_\bullet,A_\bullet)}(U_\bullet,U_\bullet)\right)= BMod_{ \mathscr Mult_k}((A_\bullet, U_\bullet),(A_\bullet, U_\bullet)). 
    \end{equation}
    in $CoMod_k^c$.
    \end{proof}

     We now recall from \cite[\S 1.1.1]{LD} that if $A$ is a $k$-algebra and $U$ is an $A$-bimodule, then the Hochschild complex $C_*(A,U)$ is defined by 
     \begin{equation*}
          C_*(A,U):\hspace{.5cm}\cdots\xrightarrow[]{\quad b\quad}C_n(A,U)\xrightarrow[]{\quad b\quad}C_{n-1}(A,U)\xrightarrow[]{\quad b\quad}\cdots \xrightarrow[]{\quad b\quad}C_0(A,U),
     \end{equation*}
     where $C_n(A,U)=U\otimes A^{\otimes n}$ and the differentials are given by 
     \begin{equation}\label{fdp1}
     \begin{array}{c}
           b:C_n(A,U)=U\otimes A^{\otimes n}\longrightarrow U\otimes A^{\otimes {n-1}}=C_{n-1}(A,U)\\
            (u,a_1,\ldots,a_n)\longmapsto (u\cdot a_1,a_2,\ldots,a_n)+\sum_{i=1}^{n-1}(-1)^i(u,a_1,\ldots,a_ia_{i+1},\ldots,a_n)+(-1)^n(a_n\cdot u,a_1,\ldots,a_{n-1})
     \end{array}
     \end{equation}
     where we write $ (u,a_1,\ldots,a_n)$ for an element $(u\otimes a_1\otimes \ldots\otimes a_n)\in C_n(A,U)=U\otimes A^{\otimes n}$. If $g:U\longrightarrow V$ is a morphism of $A$-bimodules, there is an induced chain map
     
     \begin{equation}\label{eq6.3}
    C_*(A,g):C_*(A,U)\longrightarrow C_*(A,V)\qquad (u,a_1,\ldots, a_n)\longmapsto (g(u),a_1,\ldots, a_n)\quad \forall n\geq 0.
     \end{equation}
   For $ (u,a_1,\ldots,a_n)\in C_n(A,U)$ and $\sigma\in S_n$, we set $\sigma\cdot  (u,a_1,\ldots,a_n):=(u,a_{\sigma^{-1}(1)},...,a_{\sigma^{-1}(n)})$. We know from \cite[\S 4.2.1]{LD} that if $A,A'$ are $k$-algebras, $U$ is an $A$-bimodule and $U'$ is an $A'$-bimodule, then the shuffle product on Hochschild complexes is determined by
     \begin{equation}\label{eq6.4}
     \begin{array}{c}
          sh_{p,q}: C_p(A,U)\otimes C_q(A',U')\longrightarrow C_{p+q}(A\otimes A',U\otimes U')  \\
          (u,a_1,\ldots,a_p)\otimes(u',a'_1,\ldots,a'_q)\longmapsto \underset{\sigma\in S_{p,q}}{\sum} sgn(\sigma)\left(\sigma\cdot(u\otimes u',a_1\otimes1,\ldots,a_p\otimes1,1\otimes a'_1,\ldots,1\otimes a'_q)\right),
     \end{array}   
     \end{equation}
    where $p,q\geq 0$ and $S_{p,q}$ is the set of all $(p,q)$-shuffles as explained in Section 2. 
    
    \smallskip 
    For $(A_\bullet,\tau)\in \mathscr Mult_k$, we now recall from Definition \ref{DL2.2xr} the graded algebra $Hoch(A_\bullet)\in  \mathscr Gr\mathcal Alg(\mathcal C_k)$ equipped with the multiplication 
\begin{equation}  
\tau_{m,n}^{sh}:C_*(A_m,A_m)\otimes C_*(A_n,A_n)=C_*(A_m)\otimes C_*(A_n)\xrightarrow{\quad sh\quad }C_*(A_m\otimes A_n)\xrightarrow{\qquad C_*(\tau_{m,n})\qquad}C_*(A_{m+n})=C_*(A_{m+n},A_{m+n})
\end{equation} 
    If  $U_\bullet\in A_\bullet{\small{-}}BMod$ is a left $A_\bullet$-bimodule, we will now show that   the collection $C_*(A_\bullet,U_\bullet)=\left\{C_*(A_n,U_n)\right\}_{n\geq 0}\in   \mathscr Gr(\mathcal C_k)$ forms a graded left module over the graded algebra $Hoch(A_\bullet)$.

       \begin{lem}\label{L5.12dh}
           Let $(A_\bullet,\tau)\in \mathscr{M}ult_k$ and $(U_\bullet,\vartheta)\in A_\bullet{\small{-}}BMod$. Then, $C_*(A_\bullet,U_\bullet)=\{C_*(A_n,U_n)\}_{n\geq 0}\in  \mathscr Gr(\mathcal C_k)$ becomes a graded left module over $Hoch(A_\bullet)=\left\{C_*(A_n)=C_*(A_n,A_n)\right\}_{n\geq 0}\in   \mathscr Gr\mathcal Alg(\mathcal C_k)$ by means of the action 
           \begin{equation}\label{eq5.7p}
\begin{array}{c}
\varsigma:Hoch(A_\bullet)\otimes  C_*(A_\bullet,U_\bullet)\longrightarrow  C_*(A_\bullet,U_\bullet)\\
               \varsigma_{m,n}:C_p(A_m,A_m)\otimes  C_q(A_n,U_n)\xrightarrow {\quad sh_{p,q}\quad}C_{p+q}(A_m\otimes A_n,A_m\otimes U_n)\xrightarrow{\quad C_{p+q}(\tau_{m,n},\vartheta_{m,n})\quad}  C_{p+q}(A_{m+n},U_{m+n})\\
\end{array}
           \end{equation}
          for all $m,n,p,q\geq 0$. 
       \end{lem}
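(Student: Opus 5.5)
To prove Lemma \ref{L5.12dh}, I would check three things in turn: each $\varsigma_{m,n}$ is a well-defined chain map, the action is associative, and it is unital.

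\emph{Step 1: each $\varsigma_{m,n}$ is a chain map.} For an algebra morphism $f:A\longrightarrow B$, an $A$-bimodule $U$, a $B$-bimodule $V$, and a map $g:U\longrightarrow V$ of $A$-bimodules (with $V$ made an $A$-bimodule by restriction along $f$), the map $C_*(f,g):(u,a_1,\ldots,a_n)\mapsto (g(u),f(a_1),\ldots,f(a_n))$ commutes with $b$. This is a direct check on \eqref{fdp1}: the two end terms use $g(u\cdot a)=g(u)\cdot f(a)$ and $g(a\cdot u)=f(a)\cdot g(u)$, and the middle terms use multiplicativity of $f$. Apply this with $f=\tau_{m,n}$ and $g=\vartheta_{m,n}$. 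The hypothesis in Definition \ref{D5.1kos} that $\vartheta_{m,n}$ is an $(A_m\otimes A_n)$-bimodule map is exactly the compatibility needed, once $U_{m+n}$ is regarded as an $A_m\otimes A_n$-bimodule via $\tau_{m,n}$. Together with the fact that the shuffle product \eqref{eq6.4} with coefficients is a chain map (\cite[\S 4.2]{LD}), this shows that $\varsigma_{m,n}$ is a morphism in $\mathcal C_k$.

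\emph{Step 2: associativity.} We must show
\[
\varsigma_{l+m,n}\circ(\tau^{sh}_{l,m}\otimes 1)=\varsigma_{l,m+n}\circ(1\otimes\varsigma_{m,n})
\]
as maps $C_*(A_l)\otimes C_*(A_m)\otimes C_*(A_n,U_n)\longrightarrow C_*(A_{l+m+n},U_{l+m+n})$. The shuffle product is natural in pairs (algebra, bimodule), so $C_*(f,g)\circ sh=sh\circ(C_*(f_1,g_1)\otimes C_*(f_2,g_2))$. The shuffle product is also associative:
\[
sh\circ(sh\otimes 1)=sh\circ(1\otimes sh):C_*(A_l)\otimes C_*(A_m)\otimes C_*(A_n,U_n)\longrightarrow C_*(A_l\otimes A_m\otimes A_n,\,A_l\otimes A_m\otimes U_n).
\]
Using these two facts, both sides reduce to $C_*(\text{alg map},\text{module map})\circ sh^{(3)}$. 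On the left the pair of maps is $(\tau_{l+m,n}\circ(\tau_{l,m}\otimes 1),\ \vartheta_{l+m,n}\circ(\tau_{l,m}\otimes 1))$, and on the right it is $(\tau_{l,m+n}\circ(1\otimes\tau_{m,n}),\ \vartheta_{l,m+n}\circ(1\otimes\vartheta_{m,n}))$. These agree by \eqref{diag1} and \eqref{diag5.2p}. Alternatively, one can compute elementwise, as in the proof of Theorem \ref{Thm2.10}. Every map involved commutes with the symmetric group actions, and triple shuffles decompose uniquely both ways, so both sides become
\[
\sum \mathrm{sgn}(\sigma)\,\sigma\cdot\bigl(\vartheta(a_0\otimes a'_0\otimes u),\ \ldots\bigr),
\]
summed over $(p,q,r)$-shuffles $\sigma$.

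\emph{Step 3: unitality.} The unit of $Hoch(A_\bullet)$ is $1\in C_0(A_0)=k$. Since $sh_{0,q}$ with $1\in k$ is the identity and $\vartheta_{0,n}=id$, we get $\varsigma_{0,n}(1\otimes -)=id$.

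The main obstacle is Step 2. Specifically, one has to justify associativity and naturality of the shuffle product with bimodule coefficients, where the coefficient slot is a tensor product of bimodules rather than of algebras. I would cite \cite[\S 4.2]{LD}, or verify it directly by the standard bijection between $(p+q,r)$-shuffles composed with $(p,q)$-shuffles and $(p,q+r)$-shuffles composed with $(q,r)$-shuffles, with the signs multiplying correctly.
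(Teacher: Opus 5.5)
Your proposal is correct, and your Step 1 is essentially the paper's argument: the paper factors $C_*(\tau_{m,n},\vartheta_{m,n})$ as $C_*(\tau_{m,n},U_{m+n})\circ C_*(A_m\otimes A_n,\vartheta_{m,n})$ and observes that each factor is a chain map, whereas you check the combined map directly. The paper then calls the module structure ``clear''; your Steps 2 and 3 make explicit what it leaves implicit, namely associativity (via naturality and associativity of the shuffle map together with \eqref{diag1} and \eqref{diag5.2p}) and unitality (via $\vartheta_{0,n}=id$).
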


       \begin{proof} The first map $sh_{p,q}$ in \eqref{eq5.7p} is the shuffle product obtained as in \eqref{eq6.4}.  By definition, $\vartheta_{m,n}:A_m\otimes U_n\longrightarrow U_{m+n}$ is a morphism of $(A_{m}\otimes A_n)$-bimodules, which induces the morphism $C_*(A_m\otimes A_n,\vartheta_{m,n}):C_*(A_m\otimes A_n,A_m
\otimes U_n)\longrightarrow C_*(A_m\otimes A_n,U_{m+n})$ of complexes as in \eqref{eq6.3}. Further, since the $(A_{m}\otimes A_n)$-bimodule structure on the $A_{m+n}$-bimodule $U_{m+n}$ is obtained by restriction of scalars along the morphism $\tau_{m,n}:A_m\otimes A_n\longrightarrow A_{m+n}$,  it is clear from \eqref{fdp1} that we have a morphism $C_*(\tau_{m,n},U_{m+n}):C_*(A_m\otimes A_n,U_{m+n})\longrightarrow C_*(A_{m+n},U_{m+n})$ between Hochschild complexes. Setting $C_{p+q}(\tau_{m,n},\vartheta_{m,n}):=C_{p+q}(\tau_{m,n},U_{m+n})\circ C_{p+q}(A_m\otimes A_n,\vartheta_{m,n})$, the result is clear. 

       \end{proof}

\smallskip 

We will now see that a measuring comodule between bimodules over algebras induces a morphism between their Hochschild complexes.

\begin{lem}\label{lem5.4pp}
   Let $A$, $A'$ be $k$-algebras, and $(C,\Phi:C\longrightarrow Hom_k(A,A'))$ be a coalgebra measuring from $A$ to $A'$, with $C$ cocommutative. Let $ U$ and $U'$ be bimodules over $A$ and $A'$ respectively, and $(P,\Psi:P\longrightarrow  {Hom}_k(U,U'))$ be a measuring comodule over $(C,\Phi)$ from $U$ to $U'$. Then for each $y\in P$, we have a morphism $C_*^\Psi(y):C_*(A,U)\longrightarrow C_*(A',U')$ of complexes given by
   
\begin{equation}\label{eq5.14p}
\begin{array}{c}
        C_n^\Psi(y):C_n(A,U)=U\otimes A^n\longrightarrow U'\otimes A'^n=C_n(A',U')\\
        C_n^\Psi(y)(u,a_1,\ldots,a_n):=\sum\left(\Psi(y_{(n)})(u),\Phi(y_{(0)})(a_1),\ldots,\Phi(y_{(n-1)})(a_n)\right)\\
      \end{array}
\end{equation}
for $(u,a_1,\ldots,a_n)\in C_n(A,U)$, where the iterated coaction on $y\in P$ is given by $\Delta^{n}_P(y)=  \sum y_{(0)}\otimes y_{(1)}\otimes\ldots\otimes y_{(n)}\in  C^{\otimes n}\otimes P$.
\end{lem}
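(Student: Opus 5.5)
The claim is that $C_*^\Psi(y)$ commutes with the Hochschild differential: $b'\circ C_n^\Psi(y)=C_{n-1}^\Psi(y)\circ b$ on $C_n(A,U)=U\otimes A^{\otimes n}$, where $b$ and $b'$ are the differentials of \eqref{fdp1}. First I fix conventions. The iterated coaction $\Delta^n_P(y)=\sum y_{(0)}\otimes\cdots\otimes y_{(n-1)}\otimes y_{(n)}\in C^{\otimes n}\otimes P$ is obtained by repeatedly applying $\Delta_P$ and then $\Delta$ on the $C$-factors. Coassociativity of the coaction makes this independent of the bracketing. Cocommutativity of $C$ makes the $C$-legs $y_{(0)},\ldots,y_{(n-1)}$ freely permutable among themselves, since $C$ cocommutative implies that the iterated coproduct of $C$ is invariant under permutations of the tensor factors. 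I will record as a preliminary observation: for any $\sigma\in S_n$, the expression $\sum y_{(\sigma(0))}\otimes\cdots\otimes y_{(n)}$ with the $P$-leg fixed equals $\Delta^n_P(y)$.

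I will then check the identity term by term on $(u,a_1,\ldots,a_n)$, using three kinds of terms in $b$.

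\emph{Middle terms.} For $1\le i\le n-1$, the term $(u,a_1,\ldots,a_ia_{i+1},\ldots,a_n)$ is mapped by $C^\Psi_{n-1}(y)$ to an expression with $\Phi(y_{(i-1)})(a_ia_{i+1})$. By the measuring property of $\Phi$ this splits as $\Phi(y_{(i-1)(1)})(a_i)\,\Phi(y_{(i-1)(2)})(a_{i+1})$. After reindexing by coassociativity, this is exactly the corresponding middle term of $b'$ applied to $C_n^\Psi(y)(u,a_1,\ldots,a_n)$.

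\emph{First term.} The term $(u\cdot a_1,a_2,\ldots)$ requires $\Psi(z)(u\cdot a_1)=\sum \Phi(z_{(0)})(a_1)$-acting-on-the-right of $\Psi(z_{(1)})(u)$. This follows from Definition \ref{def5.4} by taking the element $1\otimes a_1\in A^e$. Here $\Phi^e(z_{(0)})(1\otimes a_1)=\epsilon(z_{(0)(1)})\otimes\Phi(z_{(0)(2)})(a_1)=1\otimes \Phi(z_{(0)})(a_1)$, using $\Phi(x)(1)=\epsilon(x)1$ and the counit axiom. With $z=y_{(n)}$ and the legs regrouped, this matches the first term of $b'$.

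\emph{Last term.} Similarly, $(a_n\cdot u,a_1,\ldots,a_{n-1})$ uses the element $a_n\otimes 1\in A^e$, which gives $\Psi(z)(a_n\cdot u)=\sum\Phi(z_{(0)})(a_n)\cdot\Psi(z_{(1)})(u)$.

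The main obstacle is the bookkeeping of the legs in the two boundary terms. In $b'\circ C_n^\Psi(y)$, the factor $\Phi(y_{(0)})(a_1)$ sits in leg $0$ and multiplies $\Psi(y_{(n)})(u)$. In $C^\Psi_{n-1}(y)\circ b$, the leg used for $a_1$ instead comes from further coacting on the $P$-leg, and the remaining $a_j$ shift index. Likewise, for the last term the leg for $a_n$ must be moved from position $n-1$ to adjacent to the $P$-leg. I will handle this by writing both sides as images of $\Delta^n_P(y)$ under explicit linear maps $C^{\otimes n}\otimes P\to U'\otimes A'^{\otimes n-1}$. They then differ only by a permutation of the $C$-legs, which is harmless by the cocommutativity observation above. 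This is precisely where cocommutativity of $C$ is needed, just as in \cite[Proposition 2.2]{AB}. With all terms matched, including signs, which are unchanged, the map $C^\Psi_*(y)$ is a chain map.
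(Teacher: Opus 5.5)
Your proposal is correct and follows the paper's argument. Both check that $C_*^\Psi(y)$ commutes with each face map $b_i$: the middle faces use the measuring property of $\Phi$, the faces $b_0$ and $b_n$ use the $A^e$-measuring condition evaluated on $1\otimes a$ and $a\otimes 1$ (together with $\Phi(x)(1)=\epsilon(x)1$), and cocommutativity of $C$ is used to reshuffle the $C$-legs.

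One small remark: with your convention for $\Delta^n_P$, the leg produced by coacting on the $P$-leg already sits in position $n-1$. So the $b_n$ term matches exactly, and the leg reshuffling (hence cocommutativity at that step) is genuinely needed only for $b_0$.
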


\begin{proof}
   For $n\geq 0$, we know   that the Hochschild differential $b:C_n(A,U)\longrightarrow C_{n-1}(A,U)$ is given by $b=\sum_{i=0}^n(-1)^ib_i$ where 
        \begin{align*}
            & b_0(u,a_1,\ldots,a_n):=(ua_1,\ldots,a_i a_{i+1},\ldots, a_n)\\
            & b_i(u,a_1,\ldots,a_n):=(u,a_1,\ldots,a_i a_{i+1},\ldots, a_n) \qquad\textit{for $0<i < n-1$} \\
            & b_n(u,a_1,\ldots,a_n):=(a_nu,a_1.\ldots,a_{n-1})
        \end{align*}
     Therefore, for $y\in P$, it is enough to verify that  $C_n^\Psi(y)$ commutes with each $b_i$.  Since $\Phi:C\longrightarrow Hom_k(A,A')$ is a coalgebra measuring, for $0 < i< n-1$ and $(u,a_1,\ldots,a_n)\in C_n(A,U)$ we have 
     \begin{equation}\label{5.23ks}
     \begin{array}{ll}
      C_{n-1}^\Psi(y)(b_i(u,a_1,\ldots,a_n))
         &=C_{n-1}^\Psi(y)(u,a_1,\ldots,a_i a_{i+1},\ldots, a_n)\\
         &=\sum\left(\Psi(y_{(n-1)})(u),\Phi(y_{(0)})(a_1),\ldots,\Phi(y_{(i-1)})(a_ia_{i+1}),\ldots,\Phi(y_{(n-2)})(a_n)\right)\\
         &=\sum\left(\Psi(y_{(n)})(u),\Phi(y_{(0)})(a_1),\ldots,\Phi(y_{(i-1)})(a_i)\Phi(y_{(i)})(a_{i+1}),\ldots,\Phi(y_{(n-1)})(a_n)\right)\\
        & =b_i\left(C^\Psi_{n}(y)(u,a_1,\ldots,a_n)\right)\\
     \end{array}
     \end{equation}
 By Definition \ref{def5.4}, we know that 
     $(P,\Psi)$ is a measuring comodule over $(C,\Phi^e:C\longrightarrow Hom_k(A^e,A'^e))$ from $U$ to $U'$, where $U$, $U'$ are seen  as left modules over $A^e=A\otimes A^{op}$, $A'^e=A'\otimes A'^{op}$ respectively.  Accordingly, for $a\in A$, $u\in U$ and $y\in P$, we have
     \begin{equation*}
     \begin{array}{c}
     \Psi(y)(au)=\Psi(y)((a\otimes 1)\cdot u)=\Phi^e(y_{(0)})(a\otimes 1)\cdot \Psi(y_{(1)})(u)=(\Phi(y_{(0)})(a)\otimes \Phi(y_{(1)})(1))\cdot \Psi(y_{(2)})(u)=\Phi(y_{(0)})(a)\Psi(y_{(1)})(u)\\
          \Psi(y)(ua)=\Psi(y)((1\otimes a)\cdot u)=\Phi^e(y_{(0)})(1\otimes a)\cdot \Psi(y_{(1)})(u)=(\Phi(y_{(0)})(1)\otimes \Phi(y_{(1)})(a))\cdot \Psi(y_{(2)})(u)=\Psi(y_{(1)})(u)\Phi(y_{(0)})(a)\\
     \end{array}
     \end{equation*} Since $C$ is cocommutative, it now follows by a computation similar to \eqref{5.23ks} that $C_n^\Psi(y)$  commutes with $b_0$ and $b_n$. 
    
\end{proof}

 \begin{lem}
       Let $(A_\bullet,\tau),(A_\bullet',\tau')\in \mathscr Mult_k$ and $(C,\Phi=\{\Phi_n:C\longrightarrow Hom_k(A_n,A_n')\}_{n\geq 0})$ be a coalgebra measuring from $A_\bullet$ to $A_\bullet'$, with $C$ cocommutative.
    If $(U_\bullet,\vartheta)\in {A_\bullet\small{-}}BMod,(U'_\bullet,\vartheta')\in A'_\bullet\small{-}BMod$ and $(P,\Psi=\{\Psi_n:P\longrightarrow  {Hom}_k(U_n,U_n')\}_{n\geq 0})$ is a measuring comodule over $(C,\Phi)$ from $U_\bullet$ to $U_\bullet'$, then for each $y\in P$, the collection 
    \begin{equation}
     \widetilde{\Psi}(y)=\left\{\widetilde{\Psi_n}(y):=C_*^{\Psi_n}(y):C_*(A_n,U_n)\longrightarrow C_*(A_n',U_n')\right\}_{n\geq 0}
    \end{equation} determines a morphism $\widetilde{\Psi}(y): C_*(A_\bullet,U_\bullet)\longrightarrow  C_*(A'_\bullet,U'_\bullet)$ in $\mathscr Gr(\mathcal C_k)$.
 \end{lem}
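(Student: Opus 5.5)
This is a levelwise statement, so the plan is to reduce it to Lemma \ref{lem5.4pp} in the same way that Lemma \ref{prop3.4} reduced to \cite[Proposition 2.2]{AB}. A morphism in $\mathscr Gr(\mathcal C_k)$ from $C_*(A_\bullet,U_\bullet)$ to $C_*(A'_\bullet,U'_\bullet)$ is nothing more than a family of chain maps $C_*(A_n,U_n)\longrightarrow C_*(A'_n,U'_n)$, one for each $n\geq 0$. No compatibility with the grading structure is required beyond preserving the degree $n$. Compatibility with the module actions $\varsigma$ is a separate matter, not part of this statement, and I expect it to be treated afterwards as a comodule measuring statement.

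First fix $n\geq 0$. By Definition \ref{defn5.8}(b)(1), $(C,\Phi_n)$ is a coalgebra measuring from $A_n$ to $A'_n$, with $C$ cocommutative, and $(P,\Psi_n)$ is a measuring comodule over $(C,\Phi_n)$ from the $A_n$-bimodule $U_n$ to the $A'_n$-bimodule $U'_n$, in the sense of Definition \ref{def5.4}. These are exactly the hypotheses of Lemma \ref{lem5.4pp}, taking $A=A_n$, $A'=A'_n$, $U=U_n$, $U'=U'_n$. That lemma then gives, for each $y\in P$, a chain map $C^{\Psi_n}_*(y):C_*(A_n,U_n)\longrightarrow C_*(A'_n,U'_n)$ defined by formula \eqref{eq5.14p} using the iterated coaction $\Delta_P^{(p)}(y)$.

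Next, assemble these maps over all $n$ into $\widetilde{\Psi}(y)=\{C^{\Psi_n}_*(y)\}_{n\geq 0}$. Each component is a morphism in $\mathcal C_k$ and preserves the index $n$, so the family is a morphism in $\mathscr Gr(\mathcal C_k)$. The case $n=0$ needs no special treatment, since Lemma \ref{lem5.4pp} applies to any algebra, including $A_0=k$.

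I do not expect a genuine obstacle here. The only substantive check, that $C^{\Psi_n}_*(y)$ commutes with the face maps $b_0$ and $b_p$, is exactly where cocommutativity of $C$ enters, and it has already been carried out in Lemma \ref{lem5.4pp}. The one point needing care is notational: the Sweedler indices in the iterated coaction must be used consistently for each $n$, so that the same $y$ yields a well-defined family.
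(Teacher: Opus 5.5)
Your proposal is correct and matches the paper's proof: the paper also notes that, by Definition \ref{defn5.8}, each $(P,\Psi_n)$ is a measuring comodule over $(C,\Phi_n)$ from $U_n$ to $U'_n$, and then applies Lemma \ref{lem5.4pp} levelwise. One small citation point: the fact that each $(C,\Phi_n)$ is itself a coalgebra measuring comes from Definition \ref{D3.r1}, not from Definition \ref{defn5.8}(b)(1), but this does not affect the argument.
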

 \begin{proof}
 By Definition \ref{defn5.8},  for each $n\geq 0$, $(P,\Psi_n)$ is a measuring comodule over $(C,\Phi_n:C\longrightarrow Hom_k(A_n,A'_n))$ from $U_n$ to $U_n'$. The result is now clear by applying Lemma \ref{lem5.4pp}. 
 \end{proof}

We are now ready to show that a measuring comodule $(P,\Psi=\{\Psi_n\}_{n\geq 0})$ between left bimodules $U_\bullet$ and $U'_\bullet$ over multiplicative sequences of algebras $A_\bullet$ and $A'_\bullet$ respectively, induces a measuring comodule from $C_*(A_\bullet,U_\bullet)$ to $C_*(A_\bullet',U_\bullet')$. For this, we first need the notion of a measuring comodule between graded modules over graded algebras in the monoidal category $\mathcal C_k$.

\begin{defn}\label{D5.tj}
    Let $T=\{T_{n,*}\}_{n\geq 0}$ and $T'=\{T_{n,*}\}_{n\geq 0}$ be graded algebras in $\mathcal C_k$. Let $C$ be a cocommutative $k$-coalgebra  and let  $(C,\Xi:C\longrightarrow \mathscr Gr(\mathcal C_k)(T,T'))$ be a coalgebra measuring from $T$ to $T'$ in the sense of Definition \ref{def3.5},  where  $\Xi(x)=\left\{\Xi_i(x):T_{i,*}\longrightarrow T'_{i,*}\right\}_{i\geq 0}$ for $x\in C$. Let $Z=\{Z_{n,*}\}_{n\geq 0}\in \mathscr Gr(\mathcal C_k)$ be a graded left  $T$-module  and let $Z'=\{Z'_{n,*}\}_{n\geq 0}\in \mathscr Gr(\mathcal C_k)$ be a graded left $T'$-module equipped respectively with the actions
    \begin{equation}
        \nu=\left\{\nu_{m,n}:T_{m,*}\otimes Z_{n,*}\longrightarrow Z_{m+n,*}\right\}_{m,n\geq 0}\qquad \textit{and} \qquad\nu'=\left\{\nu'_{m,n}:T'_{m,*}\otimes Z'_{n,*}\longrightarrow Z'_{m+n,*}\right\}_{m,n\geq 0}
    \end{equation} 
     A measuring comodule $(P,\beth)$ over $(C,\Xi)$ from $Z$ to $Z'$ consists of
\begin{enumerate}[(a)]
    \item a left $C$-comodule $P$ with coaction given by $\Delta_P(y)=\sum y_{(0)}\otimes y_{(1)}\in C\otimes P$ for $y\in P$, and
    \item a map $\beth:P\longrightarrow \mathscr Gr(\mathcal C_k)(Z,Z')$ such that the collection $\left\{\beth(y)=\left\{\beth_i(y):Z_{i,*}\longrightarrow Z'_{i,*}\right\}_{i\geq 0}\right\}_{y\in P}$ of $k$-linear maps satisfies 
    \begin{equation}\label{eq5.15p}
        \beth_{m+n}(y)\left(\nu_{m,n}(t^m_p\otimes z^n_q)\right)=\sum \nu_{m,n}'\left(\left(\Xi_m(y_{(0)})(t^m_p)\right)\otimes \left(\beth_n(y_{(1)})(z^n_q)\right)\right)
    \end{equation}
    for all $m,n,p,q\geq 0$ and $t^m_p\in T_{m,p}, z^n_q\in Z_{n,q}$.
\end{enumerate}

\end{defn}

\begin{thm}\label{prop5.5}
    Let $(A_\bullet,\tau),(A_\bullet',\tau')\in \mathscr Mult_k$. Let $(C,\Phi=\{\Phi_n:C\longrightarrow Hom_k(A_n,A_n')\}_{n\geq 0})$ be a coalgebra measuring from $A_\bullet$ to $A_\bullet'$ with $C$ cocommutative, and $(C,Hoch^{\Phi}: C\longrightarrow \mathscr Gr(\mathcal C_k)(Hoch(A_\bullet),Hoch(A_\bullet')))$ be the corresponding coalgebra measuring from $Hoch(A_\bullet)$ to $Hoch(A_\bullet')$.

\smallskip
    Let $(U_\bullet,\vartheta)\in {A_\bullet\small{-}}BMod$ and $(U'_\bullet,\vartheta')\in A'_\bullet\small{-}BMod$.  Let $(P,\Psi=\{\Psi_n:P\longrightarrow  {Hom}_k(U_n,U_n')\}_{n\geq 0})$ be a measuring comodule over $(C,\Phi)$ from $U_\bullet$ to $U_\bullet'$. Then, the pair $(P,\widetilde{\Psi})$ forms a measuring comodule over $(C,Hoch^{\Phi})$ from $C_*(A_\bullet,U_\bullet)$ to $C_*(A_\bullet',U_\bullet')$, where
    \begin{equation}\label{eq5.24pp}
        \begin{array}{c}
              \widetilde{\Psi}:P\longrightarrow \mathscr Gr(\mathcal C_k)(C_*(A_\bullet,U_\bullet),C_*(A_\bullet',U_\bullet')) \\
             y\longmapsto \widetilde{\Psi}(y)=\left\{\widetilde{\Psi_n}(y):=C_*^{\Psi_n}(y):C_*(A_n,U_n)\longrightarrow C_*(A_n',U_n')\right\}_{n\geq 0}.
        \end{array}
    \end{equation}
     
\end{thm}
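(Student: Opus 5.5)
By the preceding lemma, each $\widetilde{\Psi}(y)$ is already a morphism in $\mathscr Gr(\mathcal C_k)$. It therefore remains only to verify the measuring comodule identity \eqref{eq5.15p} with $\nu=\varsigma$, $\nu'=\varsigma'$ from Lemma \ref{L5.12dh} and $\Xi=Hoch^\Phi$. I will follow the same pattern as the proof of Theorem \ref{Thm3.6}, replacing the coalgebra $C$ acting diagonally by the coaction of $P$.

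\textbf{Setup and easy cases.} Fix $m,n,p,q\ge 0$, $y\in P$, $(a^m_0,\ldots,a^m_p)\in C_p(A_m)$ and $(u,a^n_1,\ldots,a^n_q)\in C_q(A_n,U_n)$. When $m=0$, the map $\vartheta_{0,n}$ is the identity and $\tau_{0,n}$ is the identity; since $\Phi_0(x)=\epsilon(x)$, the identity then follows from the counit axiom of the coaction. So the main case is $m,n\ge 1$.

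\textbf{Compatibility with shuffles.} First I observe that $C_r(\tau_{m,n},\vartheta_{m,n})$ commutes with the $S_r$-action, since this action only permutes the algebra slots. I also claim that $C^{\Psi_n}_r(y)$ commutes with the $S_r$-action. The reason is that after iterating the coaction, the algebra slots receive $\Phi$ evaluated at the tensor factors $y_{(0)},\dots,y_{(r-1)}$, which lie in $C$. By coassociativity of the coaction these arise as $\Delta^{r-1}(y_{(0)})\otimes y_{(1)}$, and cocommutativity of $C$ makes permuting them harmless.

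\textbf{Left-hand side.} Applying $\widetilde{\Psi}_{m+n}(y)\circ\varsigma_{m,n}$ gives a sum over $\sigma\in S_{p,q}$ of $\mathrm{sgn}(\sigma)$ times $\sigma$ applied to a tuple. Its module slot is $\Psi_{m+n}(y_{(\ast)})(\vartheta_{m,n}(a^m_0\otimes u))$, and its algebra slots are $\Phi_{m+n}(y_{(j)})$ applied to $\tau_{m,n}(a^m_i\otimes 1)$ and to $\tau_{m,n}(1\otimes a^n_j)$.

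\textbf{Right-hand side.} Applying $\varsigma'_{m,n}(C^{\Phi_m}_p(y_{(0)})\otimes C^{\Psi_n}_q(y_{(1)}))$, I expand $y_{(0)}$ by the iterated coproduct and $y_{(1)}$ by the iterated coaction. This produces $p+1$ copies of $\Phi_m$ and $q$ copies of $\Phi_n$ on $C$-factors, together with one $\Psi_n$ on the final $P$-factor.

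\textbf{Matching slot by slot.} I use the following identities.
\begin{itemize}
\item[(i)] For each $m$, \eqref{diag4prm} combined with $\Phi_n(x)(1)=\epsilon(x)1$ gives $\tau'_{m,n}(\Phi_m(x)(a)\otimes 1)=\Phi_{m+n}(x)(\tau_{m,n}(a\otimes 1))$, and similarly on the right tensor factor.
\item[(ii)] The module slot satisfies $\vartheta'_{m,n}(\Phi_m(y_{(0)})(a^m_0)\otimes\Psi_n(y_{(1)})(u))=\Psi_{m+n}(y)(\vartheta_{m,n}(a^m_0\otimes u))$; this is exactly diagram \eqref{diag5.10}.
\end{itemize}
After these substitutions, the two sides agree up to the ordering of Sweedler indices. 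That ordering is fixed by coassociativity of the coaction and cocommutativity of $C$.

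\textbf{Main obstacle.} The delicate step is the Sweedler bookkeeping in the module slot. On the right, the factor $y_{(0)}$ feeding $\Phi_m(a^m_0)$ is split off from the $C$-part before the $\Psi_n$-factor. To apply \eqref{diag5.10}, I need to regroup one $C$-factor together with the $P$-factor, i.e. rewrite $\sum x\otimes y'$ as $\Delta_P$ of a single element. I would handle this by writing the full iterated coaction $\Delta_P^{p+q+1}(y)$ on both sides and using coassociativity $(\Delta\otimes P)\Delta_P=(C\otimes\Delta_P)\Delta_P$ to move any chosen $C$-factor adjacent to the $P$-factor. Cocommutativity then permits reordering of all the $C$-factors.
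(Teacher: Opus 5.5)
Your proposal is correct and takes essentially the same route as the paper. The paper's proof just says the result follows by a computation similar to that of Theorem \ref{Thm3.6}, and you carry out that computation: $S_r$-equivariance of $C_*(\tau_{m,n},\vartheta_{m,n})$ and $C_*^{\Psi_n}(y)$, slot-by-slot matching via \eqref{diag4prm} and \eqref{diag5.10}, and coassociativity plus cocommutativity to rearrange the Sweedler factors of $\Delta_P^{p+q+1}(y)$. One small remark: the case $n=0$, $m\geq 1$ is not a trivial case, since only $\vartheta_{0,n}$ is assumed to be the identity and not $\vartheta_{m,0}$; but your main computation never uses $m,n\geq 1$, so it covers all $m,n\geq 0$ at once.
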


\begin{proof}
Using the graded left module structures of $C_*(A_\bullet,U_\bullet)=\{C_*(A_n,U_n)\}_{n\geq 0}\in  \mathscr Gr(\mathcal C_k)$ and  $C_*(A'_\bullet,U'_\bullet)=\{C_*(A'_n,U'_n)\}_{n\geq 0}\in  \mathscr Gr(\mathcal C_k)$  over $Hoch(A_\bullet)$ and $Hoch(A'_\bullet)$ respectively, as described in Lemma \ref{L5.12dh}, the result follows from a computation similar to the proof of Theorem \ref{Thm3.6}.
\end{proof}

  Let $A,A'$ be $k$-algebras and let $(Q_c(A,A'),\Phi:Q_c(A,A')\longrightarrow Hom_k(A,A'))$ be the universal cocommutative measuring coalgebra   from $A$ to $A'$ as mentioned in Section 4.  By \cite[ Lemma 5.2, Lemma 5.4]{MB}, for any left modules $L$ and $L'$ over $A$ and $A'$ respectively, there is a universal measuring comodule 
  $(R_{Q_c(A,A')}(L,L'),\Psi: R_{Q_c(A,A')}(L,L')\longrightarrow Hom_k(L, L'))$ over $(Q_c(A,A'),\Phi)$ from $L$ to $L'$. Again as in Section 4, for $T, T'\in \mathscr Gr\mathcal Alg(\mathcal C_k)$, we consider the cocommutative coalgebra $Q_c(T,T')$ that gives a measuring that is universal among cocommutative measurings from $T$ to $T'$.   If $Z$, $Z'\in \mathscr Gr(\mathcal C_k)$ are graded left modules over $T$, $T'\in \mathscr Gr\mathcal Alg(\mathcal C_k)$ respectively, it can be shown in a manner similar to  \cite{MB} that there is a comodule $R_{Q_c(T,T')}(Z,Z')$ that is universal among comodule measurings over $Q_c(T,T')$ from $Z$ to $Z'$. 
  
  \smallskip
  For $A_\bullet\in \mathscr Mult_k$, we know that $Hoch(A_\bullet)\in \mathscr Gr\mathcal Alg(\mathcal C_k)$. If $U_\bullet\in A_\bullet{\small{-}}BMod$, we know from Lemma \ref{L5.12dh} that $C_*(A_\bullet, U_\bullet)\in \mathscr Gr(\mathcal C_k)$ is a graded left module over $Hoch(A_\bullet)$.  Accordingly, we can define a category $\widetilde{BMod}_{\mathscr Mult_k}$ whose objects are the same as those of $BMod_{\mathscr Mult_k}$ and whose Hom objects are given by 
\begin{equation}
    \widetilde{BMod}_{\mathscr Mult_k}\left((A_\bullet,U_\bullet),(A_\bullet',U_\bullet')\right):=\left(Q_c(Hoch(A_\bullet),Hoch(A_\bullet')),R_{Q_c(Hoch(A_\bullet),Hoch(A_\bullet'))}(C_*(A_\bullet,U_\bullet),C_*(A_\bullet',U_\bullet'))\right)\in CoMod_k^c
\end{equation}
for any $(A_\bullet,U_\bullet),(A_\bullet',U_\bullet')\in \widetilde{BMod}_{\mathscr Mult_k}$. We conclude this section by constructing a $CoMod_k^c$-enriched functor between the categories ${BMod}_{\mathscr Mult_k}$ and $\widetilde{BMod}_{\mathscr Mult_k}$.

\begin{Thm}\label{Thm5.17gm}
  For $(A_\bullet,U_\bullet),(A_\bullet',U_\bullet')\in{BMod}_{\mathscr Mult_k}$, there is a canonical morphism in $CoMod_k^c$
  \begin{equation}\label{529ub}
\begin{CD}
       \left(\mathcal M_c(A_\bullet,A_\bullet'),\mathcal P_{\mathcal M_c(A_\bullet,A_\bullet')}(U_\bullet,U_\bullet')\right) 
        \\
        @VV \left(\gamma(A_\bullet,A_\bullet'),\lambda(U_\bullet,U_\bullet')\right)V \\ 
             \Big(Q_c(Hoch(A_\bullet),Hoch(A_\bullet')), 
            R_{Q_c(Hoch(A_\bullet),Hoch(A_\bullet'))}(C_*(A_\bullet,U_\bullet),C_*(A_\bullet',U_\bullet'))\Big)\\
\end{CD}
    \end{equation} 
    Then, there is a $CoMod^c_k$-enriched functor $(\gamma,\lambda): BMod_{\mathscr Mult_k}\longrightarrow \widetilde{BMod}_{\mathscr Mult_k}$ which is identity on objects and whose mapping on Hom objects is given by \eqref{529ub}.  
\end{Thm}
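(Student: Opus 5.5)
The plan is to first construct the morphism \eqref{529ub}, then check the two axioms of an enriched functor: compatibility with composition and with units. Both checks will be reduced, via universal properties, to identities between concrete maps of Hochschild complexes. Throughout, write $M:=\mathcal M_c(A_\bullet,A_\bullet')$ and $Q:=Q_c(Hoch(A_\bullet),Hoch(A_\bullet'))$.

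\emph{Construction of $\lambda(U_\bullet,U_\bullet')$.} The first component is $\gamma(A_\bullet,A'_\bullet)$ from Lemma \ref{lem4.4}. By construction it satisfies
\[
\Phi(C_*(A_i),C_*(A_i'))\circ\gamma(A_\bullet,A'_\bullet)=Hoch^\Phi_i \qquad \text{for all } i .
\]
For the comodule component, apply Proposition \ref{prop5.5} to the universal comodule measuring $(\mathcal P_M(U_\bullet,U'_\bullet),\Psi)$ from Theorem \ref{Thm5.4}. This gives a comodule measuring $(\mathcal P_M(U_\bullet,U'_\bullet),\widetilde\Psi)$ over $(M,Hoch^\Phi)$ from $C_*(A_\bullet,U_\bullet)$ to $C_*(A'_\bullet,U'_\bullet)$. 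Now corestrict the $M$-coaction along $\gamma(A_\bullet,A'_\bullet)$. Because of the displayed identity, the defining condition \eqref{eq5.15p} for $(M,Hoch^\Phi)$ becomes exactly the condition for a comodule measuring over the universal measuring $Q\to\mathscr Gr(\mathcal C_k)(Hoch(A_\bullet),Hoch(A'_\bullet))$. The universal property of $R_Q(C_*(A_\bullet,U_\bullet),C_*(A'_\bullet,U'_\bullet))$ then yields a unique $Q$-comodule map $\lambda(U_\bullet,U'_\bullet)$ through which $\widetilde\Psi$ factors. The pair $(\gamma,\lambda)$ is then a morphism in $CoMod^c_k$ by the definition of morphisms there.

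\emph{Compatibility with composition.} The coalgebra component of the composition square is exactly Theorem \ref{thm4.5}, so it remains to treat the comodule component. Both composites land in $R_Q(C_*(A_\bullet,U_\bullet),C_*(A''_\bullet,U''_\bullet))$ and are maps of comodules over the appropriate coalgebra; after corestriction they are both comodule maps over $Q_c(Hoch(A_\bullet),Hoch(A''_\bullet))$. By the uniqueness in the universal property of $R$, it therefore suffices to check that they agree after composing with the universal maps into $\mathcal C_k(C_*(A_n,U_n),C_*(A''_n,U''_n))$ for each $n$. On an element $y\otimes y'$, one composite gives $C_*^{\Psi'_n}(y')\circ C_*^{\Psi_n}(y)$, because $\lambda$ commutes with the structure maps and composition in $\widetilde{BMod}$ is built from composition of maps. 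The other composite gives $C_*^{(\Psi'\circ\Psi)_n}(y\otimes y')$, because the composition \eqref{eq5.22}--\eqref{5.22p} is induced by the composite measuring of Proposition \ref{prop5.11}.

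The key identity is therefore
\[
C_*^{(\Psi'\circ\Psi)_n}(y\otimes y')=C_*^{\Psi'_n}(y')\circ C_*^{\Psi_n}(y).
\]
I will check it from formula \eqref{eq5.14p}, using the coaction $\Delta_{P\otimes P'}(y\otimes y')=(y_{(0)}\otimes y'_{(0)})\otimes(y_{(1)}\otimes y'_{(1)})$ and coassociativity. Iterating this coaction distributes the factors of $y$ and $y'$ slot by slot, which is exactly what composing the two chain maps produces. This is the same mechanism as the lower rectangle of \eqref{rect412}.

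\emph{Units.} The identity $(k,k)\to BMod_{\mathscr Mult_k}((A_\bullet,U_\bullet),(A_\bullet,U_\bullet))$ sends $1$ to the element acting by $id_{U_n}$. Its image under $C_*^{\Psi_n}$ is the identity chain map, since the iterated coaction on $k$ is $1\otimes\cdots\otimes 1$. Uniqueness in the universal property of $R$ then identifies the composite with the canonical unit of $\widetilde{BMod}_{\mathscr Mult_k}$. Together with the unit part of Theorem \ref{thm4.5}, this completes the check.

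I expect the main obstacle to be the bookkeeping in the composition step. The comodule maps live over three different coalgebras, namely $M\otimes M'$, $\mathcal M_c(A_\bullet,A''_\bullet)$ and $Q_c$, so I must make sure every map is a comodule map over a common coalgebra before invoking uniqueness. I will handle this by corestricting everything to $Q_c(Hoch(A_\bullet),Hoch(A''_\bullet))$ via the already-commuting coalgebra square of Theorem \ref{thm4.5}.
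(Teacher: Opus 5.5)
Your proposal is correct and follows the paper's route. You construct $\lambda(U_\bullet,U'_\bullet)$ via Proposition \ref{prop5.5}, corestriction along $\gamma(A_\bullet,A'_\bullet)$ and the universal property of $R_{Q_c}$, and you check the enriched-functor axioms by uniqueness in the universal properties, as in Theorem \ref{thm4.5}; the paper only asserts this check. Your explicit identity $C_*^{(\Psi'\circ\Psi)_n}(y\otimes y')=C_*^{\Psi'_n}(y')\circ C_*^{\Psi_n}(y)$ supplies the detail the paper leaves as ``in a manner similar'', together with the implicit fact that $C_*^{\Psi}(y)$ is unchanged when $y$ is pushed along a morphism of measuring comodules, since $C_*^{\Psi}(y)$ depends on the coaction as well as on $\Psi(y)$.
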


\begin{proof}
  Let $(A_\bullet,U_\bullet),(A_\bullet',U_\bullet')\in{BMod}_{\mathscr Mult_k}$. From Lemma \ref{lem4.4} we have the canonical coalgebra map
    \begin{equation}\label{eq5.32pp}
      \gamma(A_\bullet, A_\bullet'):\mathcal M_c(A_\bullet,A_\bullet')\longrightarrow Q_c(Hoch(A_\bullet),Hoch(A_\bullet'))
    \end{equation}
     We now consider the universal measuring comodule $(\mathcal P_{\mathcal M_c(A_\bullet,A_\bullet')}(U_\bullet,U_\bullet'),\Psi)$ from $U_\bullet$ to $U_\bullet'$ over the universal coalgebra measuring $(\mathcal M_c(A_\bullet,A_\bullet'),\Phi)$ as in Theorem \ref{Thm5.4}. By Proposition \ref{prop5.5}, this induces a measuring comodule $(\mathcal P_{\mathcal M_c(A_\bullet,A_\bullet')}(U_\bullet,U_\bullet'),\widetilde{\Psi})$ over $( \mathcal M_c(A_\bullet,A_\bullet'),Hoch^{\Phi})$ from $C_*(A_\bullet,U_\bullet)$ to $C_*(A_\bullet',U_\bullet')$. 
     
     \smallskip By the construction of $\gamma(A_\bullet, A_\bullet')$ in Lemma \ref{lem4.4}, we note that $(\mathcal P_{\mathcal M_c(A_\bullet,A_\bullet')}(U_\bullet,U_\bullet'),\widetilde{\Psi})$  is still a measuring from $C_*(A_\bullet,U_\bullet)$ to $C_*(A_\bullet',U_\bullet')$ when $\mathcal P_{\mathcal M_c(A_\bullet,A_\bullet')}(U_\bullet,U_\bullet')$ is treated as a 
     $ Q_c(Hoch(A_\bullet),Hoch(A_\bullet'))$-comodule. 
Applying the  universal property of $R_{Q_c(Hoch(A_\bullet),Hoch(A_\bullet'))}(C_*(A_\bullet,U_\bullet),C_*(A_\bullet',U_\bullet'))$ we now obtain a unique map
    \begin{equation}\label{531wq}
        \lambda(U_\bullet,U_\bullet'):\mathcal P_{\mathcal M_c(A_\bullet,A_\bullet')}(U_\bullet,U_\bullet')\longrightarrow R_{Q_c(Hoch(A_\bullet),Hoch(A_\bullet'))}(C_*(A_\bullet,U_\bullet),C_*(A_\bullet',U_\bullet') )
    \end{equation} of  $ Q_c(Hoch(A_\bullet),Hoch(A_\bullet'))$-comodules. The maps in \eqref{eq5.32pp} and \eqref{531wq} together determine the morphism $\left(\gamma(A_\bullet,A_\bullet'),\lambda(U_\bullet,U_\bullet')\right)$  in $CoMod_k^c$. The fact that this determines a $CoMod^c_k$-enriched functor $(\gamma,\lambda): BMod_{\mathscr Mult_k}\longrightarrow \widetilde{BMod}_{\mathscr Mult_k}$  can now be proved in a manner similar to the proof of Theorem \ref{thm4.5}. 
\end{proof}

    \section{Comultiplicative sequences of coalgebras}

In this section, we study the coalgebraic counterpart of multiplicative sequences of algebras. We will see that a measuring between multiplicative sequences of algebras can be given more generally by a comultiplicative sequence of coalgebras, instead of a single coalgebra. We will study   comultiplicative sequences of coalgebras with respect to extensions of the Sweedler product, as well as measurings between Hochschild homologies.

      \begin{defn}\label{defn6.1}
        A comultiplicative sequence of $k$-coalgebras $(Q_\bullet,\delta)$ consists of 
        \begin{enumerate}[(a)]
            \item a collection $Q_\bullet=\left\{(Q_n,\Delta_n:Q_n\longrightarrow Q_n\otimes Q_n,\epsilon_n:Q_n\longrightarrow k)\right\}_{n\geq 0}$ of coassociative counital $k$-coalgebras, and 
            \item a collection $\delta=\left\{\delta_{m,n}: Q_{m+n}\longrightarrow Q_m\otimes Q_n\right\}_{m,n\geq 0}$ of $k$-coalgebra morphisms satisfying the following coassociativity condition,          i.e., the diagram 
        \end{enumerate}
            \begin{equation}\label{6.1rdig}
\begin{tikzcd}
Q_{l+m+n}
\arrow[r, "\delta_{l+m,n} "]
\arrow[d, "\delta_{l,m+n}"']
& Q_{l+m} \otimes Q_n \arrow[d, "\delta_{l,m}\otimes Q_n"] \\
Q_l \otimes Q_{m+n}
\arrow[r, "Q_l\otimes\delta_{m,n}"']
& Q_l \otimes Q_m \otimes Q_n
\end{tikzcd}
\end{equation}
commutes for all $l,m,n\geq 0$. 
We always assume that $Q_0=k$ and each $\delta_{n,0}=id:Q_n\longrightarrow Q_n\otimes Q_0=Q_n\otimes k\cong Q_n$ and similarly that each $\delta_{0,n}=id$.

        \smallskip
Let $(Q_\bullet,\delta),(Q_\bullet',\delta')$ be comultiplicative sequences of $k$-coalgebras. A morphism $f_\bullet:(Q_\bullet,\delta)\longrightarrow (Q_\bullet',\delta')$ of comultiplicative sequences is a collection $\left\{f_n:Q_n\longrightarrow Q_n'\right\}_{n\geq 0}$ of coalgebra morphisms such that the following diagram
\begin{equation}\label{diag6.2p}
\begin{tikzcd}
Q_{m+n}
\arrow[r, "f_{m+n} "]
\arrow[d, "\delta_{m,n}"']
& Q'_{m+n}  \arrow[d, "\delta_{m,n}'"] \\
Q_m \otimes Q_n
\arrow[r, "f_m\otimes f_n"']
&  Q_m' \otimes Q_n'
\end{tikzcd}
\end{equation}
commutes for all $m,n\geq 0$. We will denote the category of comultiplicative sequences of $k$-coalgebras by $\mathcal Co\mathscr Mult_k$. We will often denote an object $(Q_\bullet,\delta)$ of $\mathcal Co\mathscr Mult_k$ simply by $Q_\bullet$.
    \end{defn}

For $(Q_\bullet,\delta)\in \mathcal Co\mathscr Mult_k$ and any $m$, $n\geq 0$,  $x^{m+n}\in Q_{m+n}$, we will often write $\delta_{m,n}(x^{m+n})=x^{m(1)}\otimes x^{n(2)}\in Q_m\otimes Q_n$ by suppressing summation signs. This distinguishes it from $\Delta_m(x^m)=x^m_{(1)}\otimes x^m_{(2)}\in Q_m\otimes Q_m$ (suppressing summation signs) for $x^m\in Q_m$ given by the coproduct $\Delta_m$ on the coalgebra $Q_m$ for $m\geq 0$. We begin by considering convolution between comultiplicative sequences of coalgebras and multiplicative sequences of algebras.

     \begin{thm}\label{prop5.2}
        Let $(A_\bullet,\tau)\in \mathscr Mult_k$ and $(Q_\bullet,\delta)\in \mathcal Co\mathscr Mult_k$. Then, the collection $ [Q, A]_\bullet=\left\{[Q, A]_n\right\}_{n\geq 0}$, given by setting 
        \begin{equation*}
            [Q, A]_0:=k\qquad [Q, A]_n:=Hom_k(Q_n,A_n)\qquad \forall n\geq1
        \end{equation*} 
        is an object of $\mathscr Mult_k$, equipped with $[\delta,\tau]=\left\{[\delta,\tau]_{m,n}\right\}_{m,n\geq 0}$ 
        \begin{equation}\label{eq5.3}
\begin{array}{c}
            [\delta,\tau]_{m,n}:  [Q, A]_m\otimes  [Q, A]_n\longrightarrow  Hom_k(Q_m\otimes Q_n,A_m\otimes A_n)\longrightarrow  [Q, A]_{m+n}\qquad 
\forall m,n\geq 1
\\
\mbox{$[\delta,\tau]_{m,n}(f\otimes g):=\tau_{m,n}\circ (f\otimes g)\circ \delta_{m,n}$}\\
\end{array}
        \end{equation}
        for $f\in  [Q, A]_m,g\in  [Q, A]_n$.
    \end{thm}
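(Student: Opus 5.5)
The plan is to follow the proof of Lemma \ref{lem3.3}, with the single coproduct $\Delta$ replaced by the structure maps $\delta_{m,n}$ of the comultiplicative sequence. Three things need checking:
\begin{enumerate}[(i)]
\item each $[Q,A]_n$ is a unital algebra;
\item each $[\delta,\tau]_{m,n}$ is a unital algebra morphism;
\item the family $[\delta,\tau]$ satisfies the associativity square \eqref{diag1}.
\end{enumerate}
For (i), $[Q,A]_n=Hom_k(Q_n,A_n)$ is the convolution algebra of \eqref{con3.2}, formed with the coproduct $\Delta_n$ of the coalgebra $Q_n$. Its unit is $u_{A_n}\circ\epsilon_n$. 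For the cases $m=0$ or $n=0$ I would set $[\delta,\tau]_{0,l}=id=[\delta,\tau]_{l,0}$. This is consistent with \eqref{eq5.3}, because $Q_0=k$, $\delta_{0,n}=id$ and $\tau_{0,n}=id$, and $Hom_k(k,k)=k$. So only $m,n\geq 1$ requires work.

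For (ii), take $f,f'\in[Q,A]_m$, $g,g'\in[Q,A]_n$ and $x\in Q_{m+n}$. Expanding, $[\delta,\tau]_{m,n}((f\cdot f')\otimes(g\cdot g'))(x)$ equals
\[
\tau_{m,n}\big(f(x^{m(1)}_{(1)})f'(x^{m(1)}_{(2)})\otimes g(x^{n(2)}_{(1)})g'(x^{n(2)}_{(2)})\big).
\]
The argument $x$ is here fed through $(\Delta_m\otimes\Delta_n)\circ\delta_{m,n}$, followed by the middle twist. Since $\tau_{m,n}$ is an algebra morphism, this expression splits as a product of a $(f,g)$-term and a $(f',g')$-term. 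The key input is that $\delta_{m,n}$ is a coalgebra morphism into the tensor product coalgebra $Q_m\otimes Q_n$. This says exactly that
\[
(\delta_{m,n}\otimes\delta_{m,n})\circ\Delta_{m+n}=(Q_m\otimes tw\otimes Q_n)\circ(\Delta_m\otimes\Delta_n)\circ\delta_{m,n}.
\]
Using this identity, the product becomes $[\delta,\tau]_{m,n}(f\otimes g)(x_{(1)})\cdot[\delta,\tau]_{m,n}(f'\otimes g')(x_{(2)})$, which is the convolution product evaluated at $x$.

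Unlike Lemma \ref{lem3.3}, no cocommutativity is needed. There, cocommutativity was used only to reorder the legs $x_{(2)},x_{(3)}$; here that reordering is supplied by the coalgebra-morphism property of $\delta_{m,n}$. Unitality follows in the same way from $(\epsilon_m\otimes\epsilon_n)\circ\delta_{m,n}=\epsilon_{m+n}$ together with $\tau_{m,n}(1\otimes 1)=1$.

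For (iii), take $f\in[Q,A]_l$, $g\in[Q,A]_m$, $h\in[Q,A]_n$ with $l,m,n\geq1$. Unwinding \eqref{eq5.3}, the two composites in \eqref{diag1} are
\[
\tau_{l+m,n}(\tau_{l,m}\otimes A_n)(f\otimes g\otimes h)(\delta_{l,m}\otimes Q_n)\delta_{l+m,n}
\]
and
\[
\tau_{l,m+n}(A_l\otimes\tau_{m,n})(f\otimes g\otimes h)(Q_l\otimes\delta_{m,n})\delta_{l,m+n}.
\]
These agree by applying \eqref{diag1} for $A_\bullet$ to the $\tau$-parts and \eqref{6.1rdig} for $Q_\bullet$ to the $\delta$-parts. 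The cases where one of $l,m,n$ is $0$ reduce to identities by the normalisation conventions.

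The main obstacle is step (ii). The care needed there is in writing the coalgebra-morphism condition for $\delta_{m,n}$ in Sweedler notation so that the legs match: the two distinct notations $x^{m(1)}\otimes x^{n(2)}$ and $x_{(1)}\otimes x_{(2)}$ must be combined without confusion. I would state that identity as a displayed formula before carrying out the computation.
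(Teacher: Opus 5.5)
Your proposal is correct and takes essentially the same route as the paper. The paper shows that $[\delta,\tau]_{m,n}$ is an algebra morphism by factoring it in three steps: first the canonical map $Hom_k(Q_m,A_m)\otimes Hom_k(Q_n,A_n)\to Hom_k(Q_m\otimes Q_n,A_m\otimes A_n)$, which is an algebra map by Anel--Joyal, Lemma 3.7.5; then pre-composition with the coalgebra map $\delta_{m,n}$; then post-composition with the algebra map $\tau_{m,n}$. Your Sweedler computation, using the coalgebra-morphism identity for $\delta_{m,n}$, is this factorization written out. Associativity is obtained in both arguments from \eqref{diag1} and \eqref{6.1rdig}, and your remark that no cocommutativity is needed matches the hypotheses of the statement.
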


    \begin{proof}
     For each $n\geq 1$, we note that $ [Q, A]_n=Hom_k(Q_n,A_n)$ is a $k$-algebra equipped with the standard convolution product as described in \eqref{con3.2}. Since $Q_m$, $Q_n$ are coalgebras and $A_m$, $A_n$ are algebras, it follows from  \cite[Lemma 3.7.5]{AJ} that the canonical map
\begin{equation}\label{6.4rt}
 [Q, A]_m\otimes  [Q, A]_n=Hom_k(Q_m,A_m)\otimes Hom_k(Q_n,A_n) \longrightarrow  Hom_k(Q_m\otimes Q_n,A_m\otimes A_n)
\end{equation} is a $k$-algebra morphism. Now  since $\tau_{m,n}:A_m\otimes A_n\longrightarrow A_{m+n}$ is an algebra morphism and $\delta_{m,n}:Q_{m+n}\longrightarrow Q_m\otimes Q_n$ is a coalgebra morphism, it is clear from the definition in \eqref{eq5.3}  that $[\delta,\tau]_{m,n}$ is a morphism of algebras. Further, using the associativity condition \eqref{diag1} on $\tau$ and the coassociativity condition \eqref{6.1rdig} on $\delta$, it can be verified that the diagram 
        \begin{equation*}
\begin{tikzcd}[row sep=3em,  column sep=6em]
 [Q,A]_l\otimes  [Q,A]_m\otimes  [Q,A]_n
\arrow[r, "{[\delta,\tau]_{l,m}\otimes Id }"]
\arrow[d, "{Id\otimes [\delta,\tau]_{m,n}}"']
&  {[Q,A]_{l+m} \otimes  [Q,A]_n} \arrow[d, "{[\delta,\tau]_{l+m,n}}"] \\
 {[Q,A]_l \otimes  [Q,A]_{m+n}}
\arrow[r, " {[\delta,\tau]_{l,m+n}}"']
&  {[Q,A]_{l+m+n}}
\end{tikzcd}
\end{equation*} 
commutes for all $l,m,n\geq 0$. This shows that $ [Q, A]_\bullet\in \mathscr Mult_k$.
\end{proof}

   \begin{cor}
       For $(Q_\bullet,\delta)\in\mathcal Co\mathscr Mult_k$, the association $Q_\bullet\mapsto [Q,k]_\bullet=\{Hom_k(Q_i,k)\}_{i\geq 0}$ determines a functor $[-,k]_\bullet:\mathcal Co\mathscr Mult_k^{op}\longrightarrow \mathscr Mult_k$.
    \end{cor}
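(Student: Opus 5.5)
The plan is to take $A_\bullet=k_\bullet$, the multiplicative sequence with $k_n=k$ for all $n\geq 0$ and every structure map $\tau_{m,n}:k\otimes k\cong k$ the canonical multiplication. This $k_\bullet$ is a multiplicative sequence: each $\tau_{m,n}$ is a unital algebra isomorphism, $\tau_{0,n}$ and $\tau_{n,0}$ are the identifications $k\otimes k\cong k$, and associativity \eqref{diag1} is the associativity of multiplication in $k$. Proposition \ref{prop5.2} applied to $(k_\bullet,\tau)$ and $(Q_\bullet,\delta)$ then shows that $[Q,k]_\bullet=\{Hom_k(Q_n,k)\}_{n\geq 0}$ is an object of $\mathscr Mult_k$. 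Its structure maps are $[\delta,\tau]_{m,n}(f\otimes g)=\mu_k\circ(f\otimes g)\circ\delta_{m,n}$; this is the restriction of $\delta_{m,n}^*$ along the canonical map $Q_m^*\otimes Q_n^*\to(Q_m\otimes Q_n)^*$. Note that $[Q,k]_0=k=Hom_k(Q_0,k)$, since $Q_0=k$.

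On morphisms, a map $f_\bullet:(Q_\bullet,\delta)\to(Q'_\bullet,\delta')$ in $\mathcal Co\mathscr Mult_k$ is sent to the collection of transposes $[f,k]_n:=f_n^*:Hom_k(Q'_n,k)\to Hom_k(Q_n,k)$, $\varphi\mapsto\varphi\circ f_n$, for $n\geq 1$, together with $[f,k]_0=id_k$. Each $f_n^*$ is a morphism of convolution algebras, because $f_n$ is a coalgebra morphism: we have $(\varphi\psi)\circ f_n=\mu\circ(\varphi\otimes\psi)\circ\Delta'_n\circ f_n=\mu\circ(\varphi f_n\otimes\psi f_n)\circ\Delta_n$, and $\epsilon'_n\circ f_n=\epsilon_n$ gives preservation of the unit.

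The remaining check is the compatibility square \eqref{diag2}. For $\varphi\in Q'^*_m$ and $\psi\in Q'^*_n$ we compute
\begin{equation*}
f_{m+n}^*\big([\delta',\tau]_{m,n}(\varphi\otimes\psi)\big)=\mu\circ(\varphi\otimes\psi)\circ\delta'_{m,n}\circ f_{m+n}=\mu\circ(\varphi\otimes\psi)\circ(f_m\otimes f_n)\circ\delta_{m,n},
\end{equation*}
using \eqref{diag6.2p}. The right-hand side equals $[\delta,\tau]_{m,n}(f_m^*\varphi\otimes f_n^*\psi)$. When $m=0$ or $n=0$ the square holds trivially, because $\delta_{0,n}$, $\delta_{n,0}$ and $[f,k]_0$ are identities.

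Functoriality is then immediate: $(g\circ f)_n^*=f_n^*\circ g_n^*$ and $id^*=id$, so the assignment defines a contravariant functor $\mathcal Co\mathscr Mult_k^{op}\to\mathscr Mult_k$. The only point needing any care is the treatment of degree $0$ and the identification of $k\otimes k$ with $k$. Everything else is formal once Proposition \ref{prop5.2} is available.
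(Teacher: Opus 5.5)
Your proposal is correct and follows the paper's route: the paper also obtains the objects $[Q,k]_\bullet$ by applying Proposition \ref{prop5.2} to the constant multiplicative sequence $\{A_i=k\}_{i\geq 0}$ with structure maps given by multiplication in $k$. The paper leaves the action on morphisms implicit, and your explicit checks fill that in correctly: the transposes $f_n^*$ are algebra maps, and they are compatible with the structure maps via \eqref{diag6.2p}.
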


    \begin{proof}
This is clear from Proposition \ref{prop5.2}, by taking $A_\bullet$ to be the  
 multiplicative sequence $A_\bullet=\left\{A_i=k\right\}_{i\geq 0}$ whose structure maps are all given by the multiplication on $k$. 
    \end{proof}

    We now recall from \cite[Proposition 6.0.2]{SW} that if $A$ is a $k$-algebra, then its finite dual 
\begin{equation} \mathring{A}=\left\{g\in A^*=Hom_k(A,k)\;\middle|\; ker(g)\mbox{ contains an ideal of finite codimension in $A$} \right\}\subseteq A^*=Hom_k(A,k)
\end{equation} is a $k$-coalgebra. Further, for  algebras $A$, $A'$, there is an isomorphism $(A\otimes A')\mathring{}\cong\mathring{A}\otimes \mathring{A'}$  (see \cite[Lemma 6.0.1]{SW} ) of coalgebras.

      \begin{thm}\label{prop5.4}
        Let $(A_\bullet,\tau)\in \mathscr Mult_k$. Then, the collection $\mathring{A}_\bullet=\{\mathring{A}_i\}_{i\geq 0}$ of finite duals forms an object of $\mathcal Co\mathscr Mult_k$, equipped with the coalgebra morphisms
       $\mathring{A}_{m+n}\xrightarrow{\quad\mathring{\tau}_{m,n}\quad} ({A}_m\otimes A_n)\mathring{}\xrightarrow{\quad\cong\quad}  \mathring{A}_m\otimes  \mathring{A}_n $ for $m,n\geq 0$. The association  $A_\bullet\longmapsto \mathring{A}_\bullet$ determines a functor $(-)\mathring{}:\mathscr Mult_k\longrightarrow \mathcal Co\mathscr Mult_k^{op}.$ 
    \end{thm}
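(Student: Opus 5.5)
The plan is to dualize Definition \ref{def2.1} term by term, using two facts: finite duality is a contravariant functor from algebras to coalgebras, and it is monoidal via Sweedler's isomorphism $\theta_{A,A'}:(A\otimes A')\mathring{}\cong\mathring{A}\otimes\mathring{A'}$ of \cite[Lemma 6.0.1]{SW}. First I would check that for an algebra morphism $f:A\to A'$ the transpose $f^*:A'^*\to A^*$ maps $\mathring{A'}$ into $\mathring{A}$. If $\ker(g)\supseteq I$ with $I$ an ideal of finite codimension, then $f^{-1}(I)$ is an ideal of finite codimension contained in $\ker(g\circ f)$. The restriction $\mathring{f}$ is then a coalgebra morphism, because the coproduct on $\mathring{A}$ is the restriction of the transpose of the multiplication and $f$ preserves products and units. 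Applying this to $\tau_{m,n}$ gives the coalgebra morphisms $\delta_{m,n}:=\theta_{A_m,A_n}\circ\mathring{\tau}_{m,n}:\mathring{A}_{m+n}\to\mathring{A}_m\otimes\mathring{A}_n$.

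Next come the normalization conditions. Since $A_0=k$, we have $\mathring{A}_0=k^*=k$. Since $\tau_{n,0}=\tau_{0,n}=id$ under the identification $A_n\otimes k=A_n$, and $\theta_{A_n,k}$ is the canonical identification, it follows that $\delta_{n,0}=\delta_{0,n}=id$.

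For the coassociativity square \eqref{6.1rdig}, I would apply $(-)\mathring{}$ to the commuting square \eqref{diag1}. This gives $\mathring{(\tau_{l,m}\otimes A_n)}\circ\mathring{\tau}_{l+m,n}=\mathring{(A_l\otimes\tau_{m,n})}\circ\mathring{\tau}_{l,m+n}$ as maps into $(A_l\otimes A_m\otimes A_n)\mathring{}$. I would then use naturality of $\theta$, namely $\theta_{A,B}\circ\mathring{(f\otimes g)}=(\mathring f\otimes\mathring g)\circ\theta_{A',B'}$, together with the compatibility of $\theta$ with triple tensor products, $(\theta_{A_l,A_m}\otimes 1)\theta_{A_l\otimes A_m,A_n}=(1\otimes\theta_{A_m,A_n})\theta_{A_l,A_m\otimes A_n}$. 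Both identities hold because $\theta^{-1}$ is just $g\otimes h\mapsto(a\otimes b\mapsto g(a)h(b))$, which is injective and visibly natural and associative. Transporting the dualized square along these identities yields exactly \eqref{6.1rdig}. To avoid bookkeeping, I would verify the equalities after composing with the injective map $\theta^{-1}$ into $(A_l\otimes A_m\otimes A_n)^*$, where they reduce to evaluating both sides on $a\otimes b\otimes c$.

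Finally, for functoriality, a morphism $f_\bullet:A_\bullet\to A'_\bullet$ in $\mathscr Mult_k$ gives coalgebra maps $\mathring{f}_n:\mathring{A'}_n\to\mathring{A}_n$. Dualizing \eqref{diag2} and applying naturality of $\theta$ gives \eqref{diag6.2p} for $\mathring{f}_\bullet$, viewed as a morphism $\mathring{A'}_\bullet\to\mathring{A}_\bullet$. Identities and composites are preserved because transposition preserves them. The only step that is more than formal is the naturality and associativity of $\theta$, and I expect it to be handled by the injectivity argument above.
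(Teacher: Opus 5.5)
Your proposal is correct and takes the same route as the paper. The paper's proof only notes that each $\mathring{A}_i$ is a coalgebra and that the result follows from Sweedler's coalgebra isomorphism $(A_m\otimes A_n)\mathring{}\cong\mathring{A}_m\otimes\mathring{A}_n$; you fill in what it leaves implicit (contravariant functoriality of the finite dual, and naturality and associativity of that isomorphism, checked by injecting into $(A_l\otimes A_m\otimes A_n)^*$ and evaluating on $a\otimes b\otimes c$).
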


    \begin{proof} We know that each $\mathring{A}_i$ is a coalgebra. The result is now clear from the fact that we have isomorphisms $({A}_m\otimes A_n)\mathring{}\cong \mathring{A}_m\otimes  \mathring{A}_n $ of coalgebras for all $m$, $n\geq 0$.
       
    \end{proof}

We remark that Proposition \ref{prop5.4} can be used to give examples of comultiplitcative sequences of coalgebras starting from multiplicative sequences of algebras. We will now construct a right adjoint to the functor  $(-)\mathring{}:\mathscr Mult_k\longrightarrow \mathcal Co\mathscr Mult_k^{op}.$

    \begin{Thm}
    Let $(A_\bullet,\tau)\in \mathscr Mult_k$ and $(Q_\bullet,\delta)\in\mathcal Co\mathscr Mult_k$. Then, there is a natural isomorphism 
        \begin{equation*}
            \mathscr Mult_k(A_\bullet, [Q,k]_\bullet)\cong\mathcal Co\mathscr Mult_k(Q_\bullet,\mathring{A}_\bullet).
        \end{equation*}
     In other words, the functor $[-,k]_\bullet:\mathcal Co\mathscr Mult_k^{op}\longrightarrow \mathscr Mult_k$ is right adjoint to the functor $(-)\mathring{}:\mathscr Mult_k\longrightarrow \mathcal Co\mathscr Mult_k^{op}$.
    \end{Thm}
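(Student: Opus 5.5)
We reduce the statement to the classical levelwise adjunction between algebras and coalgebras, given by the finite dual and the linear dual, and then check that the multiplicative and comultiplicative structures match up on both sides. Recall (Sweedler) that for an algebra $A$ and a coalgebra $Q$, algebra morphisms $f:A\longrightarrow Q^*=Hom_k(Q,k)$ correspond bijectively to coalgebra morphisms $g:Q\longrightarrow \mathring{A}$. The correspondence is $g(x)(a)=f(a)(x)$ for $x\in Q$, $a\in A$. The point is that for each $x$, the functional $a\mapsto f(a)(x)$ vanishes on an ideal of finite codimension: since $Q$ is locally finite, $x$ lies in a finite dimensional subcoalgebra $D$, and the kernel of $A\longrightarrow D^*$ is such an ideal.

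Given $f_\bullet\in \mathscr Mult_k(A_\bullet,[Q,k]_\bullet)$, we apply this correspondence for each $n\geq 1$ to get coalgebra maps $g_n:Q_n\longrightarrow \mathring{A}_n$, and we set $g_0=id_k$. Conversely, from $g_\bullet$ we recover the $f_n$. Both constructions are natural in $A_\bullet$ and $Q_\bullet$, because the levelwise correspondence is natural. It therefore remains only to show that compatibility of $f_\bullet$ with the structure maps (diagram \eqref{diag2}, with $[\delta,\tau]$ on $[Q,k]_\bullet$ from Proposition \ref{prop5.2} and $A_\bullet$ replaced by the constant sequence $k$) is equivalent to compatibility of $g_\bullet$ with $\delta$ and with the maps $\mathring{\tau}$ of Proposition \ref{prop5.4} (diagram \eqref{diag6.2p}).

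We verify this by pairing both sides against elements. Take $a^m\in A_m$, $a^n\in A_n$ and $x\in Q_{m+n}$, and write $\delta_{m,n}(x)=x^{m(1)}\otimes x^{n(2)}$. On the algebra side,
\[
[\delta,\tau]_{m,n}(f_m(a^m)\otimes f_n(a^n))(x)=f_m(a^m)(x^{m(1)})\,f_n(a^n)(x^{n(2)}),
\]
whereas $f_{m+n}(\tau_{m,n}(a^m\otimes a^n))(x)=g_{m+n}(x)(\tau_{m,n}(a^m\otimes a^n))=\mathring{\tau}_{m,n}(g_{m+n}(x))(a^m\otimes a^n)$. On the coalgebra side, $(g_m\otimes g_n)(\delta_{m,n}(x))$ is sent under the identification $\mathring{A}_m\otimes\mathring{A}_n\cong (A_m\otimes A_n)\mathring{}$ to the functional $a^m\otimes a^n\mapsto g_m(x^{m(1)})(a^m)\,g_n(x^{n(2)})(a^n)$, and this is exactly the first expression. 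So the two compatibility conditions express the same identity, once for all $x$ and once for all pure tensors $a^m\otimes a^n$.

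For the converse direction one needs that an element of $\mathring{A}_m\otimes\mathring{A}_n$ is determined by its values on pure tensors. This holds because the map $\mathring{A}_m\otimes\mathring{A}_n\longrightarrow (A_m\otimes A_n)^*$ is injective (\cite[Lemma 6.0.1]{SW}). The cases $m=0$ or $n=0$ are trivial by the unit conventions.

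The main obstacle I expect is bookkeeping rather than mathematics. One must make sure that the isomorphism $(A_m\otimes A_n)\mathring{}\cong\mathring{A}_m\otimes\mathring{A}_n$ used in Proposition \ref{prop5.4} is the canonical one, compatible with evaluation on pure tensors. One must also treat the degree $0$ terms and the unit conventions $\tau_{0,n}=id$, $\delta_{0,n}=id$ consistently, so that the bijection also respects level $0$.
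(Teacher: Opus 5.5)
Your proposal is correct and follows the same route as the paper. Both arguments apply Sweedler's levelwise adjunction between algebra maps $A_n\longrightarrow Q_n^*$ and coalgebra maps $Q_n\longrightarrow\mathring{A}_n$, and then show that compatibility of $f_\bullet$ with $[\delta,k]$ is equivalent to compatibility of $g_\bullet$ with $\delta$ and $\mathring{\tau}$. Your elementwise pairing, using injectivity of $\mathring{A}_m\otimes\mathring{A}_n\longrightarrow(A_m\otimes A_n)^*$, simply makes explicit the diagram-transfer step that the paper leaves to the naturality of the adjunction.
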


    \begin{proof}
        Let $f_\bullet=\{f_n:A_n\longrightarrow [Q,k]_n\}_{n\geq 0}\in \mathscr Mult_k(A_\bullet, [Q,k]_\bullet)$. We set $g_0:Q_0=k\longrightarrow k=\mathring{A}_0$ and for $n\geq 1$ we define 
        \begin{equation}
            f'_n: Q_n\longrightarrow \mathring{A}_n\qquad x^n\mapsto f'_n(x^n):=( a^n\longmapsto f_n(a^n)(x^n))
        \end{equation} for $a^n\in A_n,x^n\in Q_n$. Since $f_n$ is an algebra morphism, by the adjunction in \cite[Theorem 6.0.5]{SW}, the map $f'_n$ is a coalgebra morphism. Since $
        f_\bullet\in \mathscr Mult_k(A_\bullet, [Q,k]_\bullet)$, the left hand side diagram in \eqref{fdp6.7} below commutes in the category of $k$-algebras and it follows from  the adjunction in \cite[Theorem 6.0.5]{SW} that the diagram on the right hand side of \eqref{fdp6.7}    commutes in the category of $k$-coalgebras.
        \begin{equation}\label{fdp6.7}
        \begin{array}{ccc}
        \begin{CD}
        A_m\otimes A_n @>f_m\otimes f_n>> Q_m^*\otimes Q_n^* @>>> Hom_k(Q_m\otimes Q_n,k)\\
        @V\tau_{m,n}VV @V[\delta,k]_{m,n}VV @VHom_k(\delta_{m,n},k)VV \\
        A_{m+n} @>f_{m+n}>> Q_{m+n}^*@>id>> Q_{m+n}^*=Hom_k(Q_{m+n},k)\\
        \end{CD} & \Rightarrow & 
        \begin{CD}
        Q_m\otimes Q_n @>f'_m\otimes f'_n>> \mathring{A}_m\otimes \mathring{A}_n =(A_m\otimes A_n)\mathring{}\\
        @A\delta_{m,n}AA @AA\mathring{\tau}_{m,n}A \\
        Q_{m+n} @>f'_{m+n}>> \mathring{A}_{m+n}\\
        \end{CD} \\
        \end{array}
        \end{equation}
      From this, it follows that $f'_\bullet=\{f'_n\}_{n\geq 0} $ is a morphism in $\mathcal Co\mathscr Mult_k$. 
        Conversely, for  $g_\bullet=\{g_n\}_{n\geq 0}\in \mathcal Co\mathscr Mult_k(Q_\bullet,\mathring{A}_\bullet)$, we have the map $g'_n:A_n\longrightarrow Q_n^*$ of $k$-algebras  corresponding to the map $g_n:Q_n\longrightarrow \mathring{A}_n$ of $k$-coalgebras for each $n\geq 0$. By reversing the arguments, we see that $g'_\bullet=\{g'_n\}_{n\geq 0}$ is  a morphism in $\mathscr Mult_k$ from $A_\bullet$ to $[Q,k]_\bullet$. This gives the required adjunction.
    \end{proof}

     \begin{defn}\label{def6.6}
            Let $(A_\bullet,\tau),(A_\bullet',\tau')\in \mathscr Mult_k$. A comultiplicative measuring sequence $(Q_\bullet,\mho_\bullet)$ of coalgebras  from $(A_\bullet,\tau)$ to $(A_\bullet',\tau')$ consists of the following data
            \begin{enumerate}[(a)]
                \item a comultiplicative sequence of coalgebras $(Q_\bullet,\delta)$, where each $Q_n$ is cocommutative, and
                \item a collection $\mho_\bullet=\left\{\mho_n:Q_n\longrightarrow  {Hom}_k(A_n,A_n')\right\}_{n\geq 0}$ of $k$-linear maps such that
                \begin{enumerate}[(1)]
                    \item for $n=0$, we have
                    \begin{equation}
                        \mho_0(x):A_0=k\longrightarrow k=A_0'\qquad s\longmapsto  xs
                    \end{equation} for each $x\in Q_0=k$,
                    \item   for each $n\geq 1$, the pair $(Q_n,\mho_n:Q_n\longrightarrow  {Hom}_k(A_n,A_n'))$ forms a coalgebra measuring from $A_n$ to $A_n'$, and
                    \item for each $x^{m+n}\in Q_{m+n}$ with $\delta_{m,n}(x^{m+n})=x^{m(1)}\otimes x^{n(2)}\in Q_m\otimes Q_n$ and $m$, $n\geq 0$, the following diagram commutes
                    \begin{equation}\label{diag6.4}
                \begin{tikzcd}
                A_m \otimes A_n
                \arrow[r, "\tau_{m,n}"]
                \arrow[d, "\mho_m(x^{m(1)})\otimes\mho_{n}(x^{n(2)})"']
                &A_{m+n}
                \arrow[d, "\mho_{m+n}(x^{m+n})"]\\
                A_m'\otimes A_{n}' \arrow[r, "\tau_{m,n}'"']
                &A'_{m+n}
            \end{tikzcd}
            \end{equation}
 
                \end{enumerate}    
            \end{enumerate}
We will sometimes write  a comultiplicative measuring sequence $(Q_\bullet,\mho_\bullet)$ of coalgebras as $(Q_\bullet,\delta,\mho_\bullet)$, when we need to emphasize the structure maps of the comultiplicative sequence $(Q_\bullet,\delta)$. 
        \end{defn}

    If we consider $\mho_\bullet=\left\{\mho_n:Q_n\longrightarrow  {Hom}_k(A_n,A_n')\right\}_{n\geq 0}$ appearing in Definition \ref{def6.6}  as a collection of maps $\left\{\mho_n:Q_n\otimes A_n\longrightarrow A'_n\right\}_{n\geq 0}$, we note that the condition in \eqref{diag6.4} may also be expressed as
    \begin{equation}\label{diag4prm2}
        \mho_{m+n}\big(x^{m+n}\otimes\tau_{m,n}(a^m\otimes a^n)\big)= \tau_{m,n}'\Big(\mho_m(x^{m(1)}\otimes a^m)\otimes\mho_n(x^{n(2)}\otimes a^n)\Big)\quad \forall m,n\geq 0
    \end{equation}
    for all  $a^m\in A_m$, $a^n\in A_n$ and $x^{m+n}\in Q_{m+n}$ with $\delta_{m,n}(x^{m+n})=x^{m(1)}\otimes x^{n(2)}$, with summation signs suppressed.

        \begin{thm}\label{prop6.7}
        Let $(A_\bullet,\tau),(A'_\bullet,\tau')\in\mathscr Mult_k$ and $(Q_\bullet,\delta)\in \mathcal Co\mathscr Mult_k$. Let $\mho_\bullet=\left\{\mho_n:Q_n\longrightarrow  {Hom}_k(A_n,A_n')\right\}_{n\geq 0}$ be a family of $k$-linear maps, with $\mho_0(x)(1)=x$ for all $x\in Q_0=k$. We now set
        \begin{equation}\label{eq6.5}
        g_n:A_n\longrightarrow [Q,A']_n= {Hom}_k(Q_n, A'_n)\qquad a^n\longmapsto g_n(a^n):=\left(x^n\longmapsto \mho_n(x^n)(a^n)\right)\qquad\forall n\geq1,
    \end{equation}
    and $g_0=id:A_0=k\longrightarrow k=[Q,A']_0$.
    Then, $(Q_\bullet,\mho_\bullet)$ is a comultiplicative measuring sequence of coalgebras  from $(A_\bullet,\tau)$ to $(A'_\bullet,\tau')$ if and only if $g_\bullet=\{g_n\}_{n\geq0}: A_\bullet\longrightarrow  [Q, A']_\bullet$ is a morphism in $\mathscr Mult_k$.
    \end{thm}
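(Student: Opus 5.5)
The proof follows the pattern of Proposition \ref{prop3.3}, with the single coalgebra $C$ and its coproduct $\Delta$ replaced by the comultiplicative sequence $(Q_\bullet,\delta)$ and the maps $\delta_{m,n}$. The multiplicative structure on $[Q,A']_\bullet$ is the one of Proposition \ref{prop5.2}, given by $[\delta,\tau']_{m,n}(f\otimes g)=\tau'_{m,n}\circ(f\otimes g)\circ\delta_{m,n}$. The equivalence then splits into two independent pieces: one level-by-level (the algebra morphism property of each $g_n$), and one for each pair $(m,n)$ (compatibility with the structure maps).

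\emph{Levelwise part.} For $n\geq 1$, I will invoke the classical fact recalled before Lemma \ref{lem3.3}: a linear map $\mho_n:Q_n\longrightarrow Hom_k(A_n,A'_n)$ is a coalgebra measuring if and only if the adjoint map $g_n:A_n\longrightarrow Hom_k(Q_n,A'_n)$ of \eqref{eq6.5} is a morphism of algebras, where $Hom_k(Q_n,A'_n)$ carries the convolution product \eqref{con3.2}. Multiplicativity of $g_n$ unpacks to $\mho_n(x^n)(a_1a_2)=\mho_n(x^n_{(1)})(a_1)\mho_n(x^n_{(2)})(a_2)$, and unitality unpacks to $\mho_n(x^n)(1)=\epsilon_n(x^n)1$. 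For $n=0$, the hypothesis $\mho_0(x)(1)=x$ together with linearity forces $\mho_0(x)(s)=xs$, which is exactly condition (1) of Definition \ref{def6.6}; and $g_0=id$ is automatically an algebra map. So conditions (1) and (2) of Definition \ref{def6.6} hold if and only if every $g_n$ is an algebra morphism.

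\emph{Compatibility part.} Next I show that diagram \eqref{diag6.4} commutes for all $x^{m+n}$ if and only if $g_{m+n}\circ\tau_{m,n}=[\delta,\tau']_{m,n}\circ(g_m\otimes g_n)$. When $m=0$ or $n=0$, both conditions are trivial because $\tau_{0,n}$, $\delta_{0,n}$, $[\delta,\tau']_{0,n}$ and $\mho_0$ are identities (or scalar multiplication), so both sides reduce to the same identity. For $m,n\geq 1$, I evaluate both sides on $a^m\otimes a^n$ and then at $x^{m+n}\in Q_{m+n}$. The left side gives $\mho_{m+n}(x^{m+n})(\tau_{m,n}(a^m\otimes a^n))$. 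The right side gives $\tau'_{m,n}(g_m(a^m)(x^{m(1)})\otimes g_n(a^n)(x^{n(2)}))=\tau'_{m,n}(\mho_m(x^{m(1)})(a^m)\otimes\mho_n(x^{n(2)})(a^n))$. Equality for all $x^{m+n}$, $a^m$, $a^n$ is precisely \eqref{diag4prm2}. Combining this with the levelwise part gives the proposition.

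I do not expect a serious obstacle. The only point needing care is matching the conventions at level $0$, where $[Q,A']_0=k$ is defined ad hoc rather than as $Hom_k(Q_0,A'_0)$. Since $Q_0=k$ and $Hom_k(k,k)\cong k$, the identification is canonical, and with this identification the normalization $\mho_0(x)(1)=x$ matches $g_0=id$. In contrast to the proof of Lemma \ref{lem3.3}, cocommutativity of the $Q_n$ plays no role here; it is needed only in Proposition \ref{prop5.2} to ensure that $[Q,A']_\bullet$ lies in $\mathscr Mult_k$, and that result is assumed.
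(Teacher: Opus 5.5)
Your argument is correct and is the paper's own: the paper's proof just says it is analogous to Proposition~\ref{prop3.3}. That means the levelwise equivalence ``$\mho_n$ is a measuring $\Leftrightarrow$ $g_n$ is an algebra map'', plus evaluating $g_{m+n}\circ\tau_{m,n}$ and $[\delta,\tau']_{m,n}\circ(g_m\otimes g_n)$ at $x^{m+n}$ to recover \eqref{diag4prm2}.

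There is one correction to your closing remark. Proposition~\ref{prop5.2} does not use cocommutativity at all. It holds for every $Q_\bullet\in\mathcal Co\mathscr Mult_k$, because $\delta_{m,n}$ is a coalgebra map into the tensor product coalgebra $Q_m\otimes Q_n$, and the map $Hom_k(Q_m,A_m)\otimes Hom_k(Q_n,A_n)\to Hom_k(Q_m\otimes Q_n,A_m\otimes A_n)$ is multiplicative without any cocommutativity.

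Cocommutativity actually enters only through Definition~\ref{def6.6}(a), which builds it into the notion of a comultiplicative measuring sequence. So your ``if'' direction, like the paper's, tacitly needs each $Q_n$ to be cocommutative, a hypothesis the statement omits. Without it the ``if'' direction fails. Take $Q_n=D^{\otimes n}$ with $D$ non-cocommutative and $\delta_{m,n}$ the canonical isomorphisms. Let $A_\bullet=A'_\bullet$ be the constant sequence $k$, and set $\mho_n(x)(s)=\epsilon_n(x)s$. Then $g_\bullet$ is a morphism in $\mathscr Mult_k$, but condition (a) fails.
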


    \begin{proof}
       The proof follows in a manner similar to Proposition \ref{prop3.3}.  

    \end{proof}

\smallskip

For $(Q_\bullet,\delta)\in \mathcal Co\mathscr Mult_k$, it is clear by Proposition \ref{prop5.2} that  the association $A_\bullet\longmapsto [Q,A]_\bullet$ determines a functor $[Q,-]_\bullet:\mathscr Mult_k\longrightarrow \mathscr Mult_k$. We now show that the restriction of the functor $[Q,-]_\bullet:\mathscr Mult_k\longrightarrow \mathscr Mult_k$ to the full subcategory $\mathcal Comm_k^{\geq 0}$ of $\mathscr Mult_k$ defined in Section \ref{section3} has a left adjoint. This left adjoint will be a functor 
\begin{equation}
(Q\Box-)_\bullet:\mathscr Mult_k\longrightarrow Comm_k^{\geq 0}
\end{equation} which extends the Sweedler product like construction in Section 3.  When $Q_\bullet$ is the comultiplicative sequence given by taking $Q_i=C$, $i\geq 1$ for a single cocommutative coalgebra $C$, we recover the adjunction in Theorem \ref{Thm3.9}.

 \smallskip  
We now take $(Q_\bullet,\delta)\in \mathcal Co\mathscr Mult_k$ and 
$A_\bullet\in \mathscr Mult_k$.  We set $(Q\Box A)_0:=k$, and for each $n\geq1$, we let  $(Q\Box A)_n$ be the commutative $k$-algebra  generated by the symbols 
\begin{equation} 
\{\mbox{$x^i\Box a^i$ $\vert$  $x^i\in Q_i$, $a^i\in A_i$, $1\leq i\leq n$}\}
\end{equation} subject to the following relations for  $1\leq i\leq n$
\begin{enumerate}[(a)]
        \item the canonical map $Q_i\times A_i\longrightarrow (Q\Box A)_n$ defined by $(x^i,a^i)\mapsto x^i\Box a^i$  is $k$-bilinear,

            \item for  $a_1^i$, $a_2^i\in A_i$ and $x^i\in Q_i$ with coproduct $\Delta_i(x^i)=x^i_{(1)}\otimes x^i_{(2)}\in Q_i\otimes Q_i$,  we have $x^i\Box a^i_1a^i_2=(x^i_{(1)}\Box a^i_1)(x^i_{(2)}\Box a^i_2)$, 

            \item for $a^i\in A_i$, $a^j\in A_j$ with $i+j\leq n$ and $x^{i+j}\in Q_{i+j}$ with $\delta_{i,j}(x^{i+j})=x^{i(1)}\otimes x^{j(2)}
\in Q_i\otimes Q_j$, we have $x^{i+j}\Box (\tau_{i,j}(a^i\otimes a^j))=(x^{i(1)}\Box a^i)(x^{j(2)}\Box a^j)$,
        \item  for $x^i\in Q_i$, we have $x^i\Box 1_{A_i}=\epsilon_i(x^i)$, where $\epsilon_i:Q_i\longrightarrow k$ is the counit on the coalgebra $Q_i$.
    \end{enumerate}

     \begin{lem}\label{lem6.6}
        Let $(A_\bullet,\tau)\in \mathscr Mult_k$ and $(Q_\bullet,\delta)\in \mathcal Co\mathscr Mult_k $, with each $Q_i$ cocommutative for $i\geq 0$.

        \medskip 
        \noindent
        (a) The collection $(Q\Box A)_\bullet=\left\{(Q\Box A)_n\right\}_{n\geq 0}$ is an object of $\mathcal Comm^{\geq 0}_k$, equipped with maps $\xi=\{\xi_{m,n}\}_{0\leq m\leq n}$  determined by
            \begin{equation}\label{eq6.13t}
                \xi_{m,n}:(Q\Box A)_m\longrightarrow (Q\Box A)_n\qquad (x^t\Box a^t)\longmapsto (x^t\Box a^t)
            \end{equation}
             for all $x^t\in Q_t, a^t\in A_t $ with $1\leq t\leq m$. The assignment $A_\bullet\mapsto (Q\Box A)_\bullet$ defines a functor $(Q\Box-)_\bullet:\mathscr Mult_k\longrightarrow \mathcal Comm_k^{\geq 0}.$

             \medskip
             \noindent

             (b) There is a comultiplicative measuring sequence $(Q_\bullet,\mho_\bullet=\left\{\mho_n:Q_n\otimes A_n\longrightarrow (Q\Box A)_n\right\}_{n\geq 0})$ of coalgebras from $A_\bullet$ to $(Q\Box A)_\bullet$, with $\mho_n$ determined  by
        \begin{equation*}
        \begin{array}{c}
              \mho_0:k\otimes k\overset{\cong}{\longrightarrow} k \qquad x\otimes s\longmapsto xs \\
               \mho_n: Q_n\otimes A_n\longrightarrow (Q\Box A)_n\qquad x^n\otimes a^n\mapsto x^n\Box a^n\qquad \forall n\geq 1.
        \end{array}
        \end{equation*} where $(Q\Box A)_\bullet\in \mathcal Comm_k^{\geq 0}$ is treated as an object of $\mathscr Mult_k$.
        \end{lem}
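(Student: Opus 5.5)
The plan is to follow the proof of Lemma \ref{lem3.4}, replacing the single coproduct $\Delta$ of $C$ in relation (b)(2) by the structure maps $\delta_{i,j}$ of $Q_\bullet$.

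For (a), the first thing to note is that for $m\leq n$ every generator $x^t\Box a^t$ of $(Q\Box A)_m$, with $1\leq t\leq m$, is also a generator of $(Q\Box A)_n$. Each defining relation (a)--(d) of $(Q\Box A)_m$ involves only indices $i\leq m$, or $i+j\leq m$, so it is also a defining relation of $(Q\Box A)_n$. Hence the assignment \eqref{eq6.13t} on generators extends to a well-defined homomorphism $\xi_{m,n}$ of commutative algebras. The identities $\xi_{n,n}=id$ and $\xi_{n,l}\circ\xi_{m,n}=\xi_{m,l}$ hold on generators, and therefore hold everywhere.

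Functoriality in $A_\bullet$ is handled next. Given $f_\bullet:A_\bullet\to A'_\bullet$ in $\mathscr Mult_k$, set $(Q\Box f)_n(x^t\Box a^t):=x^t\Box f_t(a^t)$. We must check that the relations are preserved:
\begin{itemize}
\item relation (b) is preserved because $f_t$ is multiplicative;
\item relation (c) is preserved because $f_{i+j}\circ\tau_{i,j}=\tau'_{i,j}\circ(f_i\otimes f_j)$;
\item relation (d) is preserved because $f_t$ is unital.
\end{itemize}
Compatibility with the maps $\xi$ is immediate on generators, and so are the identity and composition laws.

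For (b), the case $n=0$ is trivial. For $n\geq 1$, relations (b) and (d) say precisely that $\mho_n(x^n)(a_1a_2)=\mho_n(x^n_{(1)})(a_1)\,\mho_n(x^n_{(2)})(a_2)$ and $\mho_n(x^n)(1)=\epsilon_n(x^n)$. So each $(Q_n,\mho_n)$ is a coalgebra measuring.

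The remaining condition is \eqref{diag6.4}, in the form \eqref{diag4prm2}. Recall that $(Q\Box A)_\bullet$ is regarded as a multiplicative sequence via \eqref{pi3r}, so $\tau'_{m,n}(u\otimes v)=\xi_{m,m+n}(u)\,\xi_{n,m+n}(v)$. Then for $m,n\geq1$ we compute
\[
\tau'_{m,n}\big(\mho_m(x^{m(1)}\otimes a^m)\otimes\mho_n(x^{n(2)}\otimes a^n)\big)=(x^{m(1)}\Box a^m)(x^{n(2)}\Box a^n),
\]
where both factors are viewed in $(Q\Box A)_{m+n}$. By relation (c) with $i=m$, $j=n$, this equals $x^{m+n}\Box\tau_{m,n}(a^m\otimes a^n)=\mho_{m+n}(x^{m+n}\otimes\tau_{m,n}(a^m\otimes a^n))$.

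The cases $m=0$ or $n=0$ use the conventions $\delta_{n,0}=id$, $\tau_{n,0}=id$ and $\mho_0(x)(s)=xs$. For example, $x^n\in Q_n$ corresponds under $\delta_{n,0}$ to $x^n\otimes1$, and both sides of \eqref{diag4prm2} then reduce to $x^n\Box a^n$.

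The main point needing care is the mixed-index relation (c). One has to be sure that $x^{m(1)}\Box a^m$ is a legitimate element of $(Q\Box A)_{m+n}$, that is, that it lies in the image of $\xi_{m,m+n}$, and that the ordering of the two factors does not matter. Both follow from the construction: every generator of index $t\leq m+n$ is allowed in $(Q\Box A)_{m+n}$, and that algebra is commutative. Cocommutativity of each $Q_i$ is used only to make the ordering in relation (b) harmless, exactly as in the proof of Lemma \ref{lem3.4}.
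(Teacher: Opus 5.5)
Your proposal is correct and follows essentially the same route as the paper. The defining relations of $(Q\Box A)_m$ persist in $(Q\Box A)_n$, which gives the algebra maps $\xi_{m,n}$, and part (b) is read off from the defining relations exactly as in Lemma \ref{lem3.4}(b), with relation (c) supplying \eqref{diag4prm2}. You are more explicit than the paper about functoriality and about the cases $m=0$ or $n=0$. Your closing remark on cocommutativity is slightly off: the ordering of factors is irrelevant because $(Q\Box A)_n$ is commutative, and the hypothesis is needed only because Definition \ref{def6.6}(a) requires it. This does not affect the argument.
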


        \begin{proof}
          By construction, whenever $m\leq n$, the relations among the generators of $(Q\Box A)_m$ continue to hold in $(Q\Box A)_n$. It follows that  $\xi_{m,n}:(Q\Box A)_m\longrightarrow (Q\Box A)_n$  are algebra morphisms. This proves (a). The proof of (b) follows in a similar manner to that of Lemma \ref{lem3.4}(b). 
        \end{proof}

        \begin{Thm}\label{thm6.10}
            Let   $(Q_\bullet,\delta)\in \mathcal Co\mathscr Mult_k$, with each $Q_i$ cocommutative for $i\geq 0$. For $A_\bullet \in \mathscr Mult_k$, the comultiplicative measuring sequence $(Q_\bullet,\mho_\bullet=\{\mho_n: Q_n\otimes A_n\longrightarrow (Q\Box A)_n\}_{n\geq 0})$ of coalgebras satisfies the following universal property: for any $B_\bullet\in \mathcal Comm_k^{\geq 0}$ and any comultiplicative measuring sequence $(Q_\bullet,\mho'_\bullet=\{\mho'_n: Q_n\otimes A_n\longrightarrow B_n\}_{n\geq 0})$ of coalgebras from $A_\bullet$ to $B_\bullet$, there exists a unique morphism $g_\bullet:=\{g_n\}_{n\geq 0}:(Q\Box A)_\bullet \longrightarrow B_\bullet$ in $\mathcal Comm_k^{\geq 0}$ such that the following diagram 
    \begin{equation}\label{dgi615}
\begin{array}{c}
    \begin{tikzpicture}[>=stealth]
\node (A) at (0,0) {$Q_n\otimes A_n$};
\node (B) at (1.3,1.3) {$B_n$};
\node (C) at (2.6,0) {$(Q \Box A)_n$};

\draw[->] (A) -- node[above left] {$\mho'_n$} (B);
\draw[->] (A) -- node[below ] {$\mho_n$} (C);
\draw[->, dashed] (C) -- node[right] {$g_n$} (B);
\end{tikzpicture}
\end{array}
\end{equation}
commutes for each $n\geq 0$.
        \end{Thm}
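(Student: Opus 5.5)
We follow the proof of Proposition \ref{prop3.6}, replacing the single coalgebra $C$ by the comultiplicative sequence $Q_\bullet$. Write $\tau^A$ for the structure maps of $A_\bullet$ and $\tau^B$ for those of $B_\bullet$ regarded as an object of $\mathscr Mult_k$ via \eqref{pi3r}. Set $g_0=id$. For $n\geq 1$ and a generator $x^t\Box a^t$ with $1\leq t\leq n$, define
\begin{equation*}
g_n(x^t\Box a^t):=\tau^B_{t,n-t}\big(\mho'_t(x^t\otimes a^t)\otimes 1\big)=\xi^B_{t,n}\big(\mho'_t(x^t\otimes a^t)\big),
\end{equation*}
and extend multiplicatively. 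This is possible because $(Q\Box A)_n$ is the commutative algebra generated by these symbols and $B_n$ is commutative. We must then check that $g_n$ respects the relations (a)--(d). Bilinearity (a) is immediate. For the counit relation (d), use that $\mho'_t$ is a measuring, so $\mho'_t(x^t\otimes 1)=\epsilon_t(x^t)$. For the multiplicativity relation (b), note that $\mho'_t$ measures $A_t$ into $B_t$ and that $\xi^B_{t,n}$ is an algebra map; hence $g_n(x^t\Box a_1a_2)=g_n(x^t_{(1)}\Box a_1)\,g_n(x^t_{(2)}\Box a_2)$.

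The main step is relation (c). Take $i+j=t\leq n$ and $x^t\in Q_t$ with $\delta_{i,j}(x^t)=x^{i(1)}\otimes x^{j(2)}$. By \eqref{diag4prm2},
\begin{equation*}
\mho'_t\big(x^t\otimes\tau^A_{i,j}(a^i\otimes a^j)\big)=\tau^B_{i,j}\big(\mho'_i(x^{i(1)}\otimes a^i)\otimes\mho'_j(x^{j(2)}\otimes a^j)\big).
\end{equation*}
In $B_t$ this equals $\xi^B_{i,t}(\mho'_i(\cdots))\,\xi^B_{j,t}(\mho'_j(\cdots))$. Applying $\xi^B_{t,n}$ and using $\xi^B_{t,n}\circ\xi^B_{i,t}=\xi^B_{i,n}$ turns this into $g_n(x^{i(1)}\Box a^i)\,g_n(x^{j(2)}\Box a^j)$. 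This argument is cleaner than the chain of associativity manipulations in Proposition \ref{prop3.6}, because it works directly with the transition maps $\xi^B$ of the tower.

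Compatibility with the tower maps, $\xi^B_{m,n}\circ g_m=g_n\circ\xi_{m,n}$, then follows on generators from $\xi^B_{t,n}=\xi^B_{m,n}\circ\xi^B_{t,m}$, so $g_\bullet$ is a morphism in $\mathcal Comm_k^{\geq 0}$.

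For commutativity of \eqref{dgi615}, take $t=n$: then $g_n\mho_n(x^n\otimes a^n)=\xi^B_{n,n}(\mho'_n(x^n\otimes a^n))=\mho'_n(x^n\otimes a^n)$. For $n=0$ the triangle commutes trivially.

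For uniqueness, suppose $h_\bullet$ is another morphism with $h_n\circ\mho_n=\mho'_n$ for all $n$. Since $x^t\Box a^t\in(Q\Box A)_n$ is the image under $\xi_{t,n}$ of $\mho_t(x^t\otimes a^t)$, we get $h_n(x^t\Box a^t)=\xi^B_{t,n}h_t\mho_t(x^t\otimes a^t)=\xi^B_{t,n}\mho'_t(x^t\otimes a^t)=g_n(x^t\Box a^t)$. Algebra maps agreeing on generators coincide, so $h_\bullet=g_\bullet$.

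The main obstacle is bookkeeping: the two Sweedler notations, $\Delta_i$ and $\delta_{i,j}$, must not be confused. One should also observe that the definition of $g_n$ on generators is independent of the choice of $t$, since each symbol carries its own index $t$.
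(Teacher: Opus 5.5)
Your proof is correct and follows the same route as the paper. You use the same formula $g_n(x^t\Box a^t)=\tau^B_{t,n-t}(\mho'_t(x^t\otimes a^t)\otimes 1)$, check it against the defining relations of $(Q\Box A)_n$, verify compatibility with the structure maps and the triangle at $t=n$, and argue uniqueness on generators. The only difference is bookkeeping: you rewrite $\tau^B_{t,n-t}(-\otimes 1)$ as $\xi^B_{t,n}$ and check compatibility with the tower maps $\xi$ directly, whereas the paper verifies the $\tau$-compatibility \eqref{eq6.14} through associativity of $\tau^B$ and invokes fullness of $\mathcal Comm_k^{\geq 0}$ in $\mathscr Mult_k$. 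Your version makes relation (c) shorter and makes the uniqueness step explicit; the paper calls that step clear, but it genuinely needs the other morphism to commute with the $\xi$'s.
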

        \begin{proof}
           Let $\tau^A=\left\{\tau_{m,n}^A:A_m\otimes A_n\longrightarrow A_{m+n}\right\}_{m,n\geq 0}$ be the structure maps on $A_\bullet \in \mathscr Mult_k$ and $\tau^B=\left\{\tau_{m,n}^B:B_m\otimes B_n\longrightarrow B_{m+n}\right\}_{m,n\geq 0}$ be the structure maps on $B_\bullet\in \mathcal Comm^{\geq0}_k$ when treated as an object of $\mathscr Mult_k$.  Let $(Q_\bullet,\mho'_\bullet=\{\mho'_n: Q_n\otimes A_n\longrightarrow B_n\}_{n\geq 0})$  be a comultiplicative measuring sequence of coalgebras from $A_\bullet$ to $B_\bullet$. We  set $g_0=id: (Q\Box A)_0=k\longrightarrow k=B_0$ and for $n\geq1$ we define  
           \begin{equation}
               g_n:(Q\Box A)_n\longrightarrow B_n\qquad  x^i\Box a^i\longmapsto \tau^B_{i,n-i}(\mho'_i(x^i\otimes a^i)\otimes 1)=\tau^B_{n-i,i}( 1\otimes \mho'_i(x^i\otimes a^i))
           \end{equation}
           for $a^i\in A_i,x^i\in Q_i$ where $1\leq i\leq n$. It can be verified in a  manner similar to Proposition \ref{prop3.6} that each $g_n$ is a well defined morphism of algebras. We claim that $g_\bullet=\{g_n\}_{n\geq 0}$ is a morphism in $\mathcal Comm^{\geq 0}_k$. Since $\mathcal Comm^{\geq 0}_k$ is a full subcategory of $\mathscr Mult_k$, it is enough to verify the following equation 
        \begin{equation}\label{eq6.14}
            g_{m+n}\left((\delta\Box \tau^A)_{m,n}((x^i\Box a^i)\otimes(x^j\Box a^j))\right)= \tau^B_{m,n}\left(g_m(x^i\Box a^i)\otimes g_n(x^j\Box a^j)\right)
        \end{equation}
        holds for all $m,n\geq 0$, $x^i\in Q_i,x^j\in Q_j$ and $a^i\in A_i,a^j\in A_j$ where $1\leq i\leq m$, $1\leq j\leq n$. This is clear for  $m=0$ or $n=0$. For $m,n\geq 1$, we see that
        \begin{align*}
        \tau^B_{m,n}\left(g_m(x^i\Box a^i)\otimes g_n(x^j\Box a^j)\right)
            &=\tau^B_{m,n}\left(\tau^B_{i,m-i}\left(\mho'_i(x^i\otimes a^i)\otimes1\right)\otimes \tau^B_{j,n-j}\left(\mho'_j(x^j\otimes a^j)\otimes1\right)\right)\\
            &=\tau^B_{m,n}\left(\tau^B_{i,m-i}\left(\mho'_i(x^i\otimes a^i)\otimes1\right)\otimes 1\right)\cdot\tau^B_{m,n}\left(1\otimes \tau^B_{j,n-j}\left(\mho'_j(x^j\otimes a^j)\otimes1\right)\right)\\
            &=\tau^B_{m,n}\left(\tau^B_{i,m-i}\left(\mho'_i(x^i\otimes a^i)\otimes1\right)\otimes 1\right)\cdot\tau^B_{m,n}\left(1\otimes \tau^B_{n-j,j}\left(1\otimes \mho'_j(x^j\otimes a^j)\right)\right)\tag{as $B_n$ is commutative }\\
            &=\tau^B_{i,m-i+n}\left(\mho'_i(x^i\otimes a^i)\otimes \tau^B_{m-i,n}(1\otimes 1)\right)\cdot\tau^B_{m+n-j,j}\left(\tau^B_{m,n-j}(1\otimes 1)\otimes \mho'_j(x^j\otimes a^j)\right)\tag{by \eqref{diag1}}\\
            &=g_{m+n}(x^i\Box a^i)\cdot g_{m+n}(x^j\Box a^j)
            =g_{m+n}\left((x^i\Box a^i)(x^j\Box a^j)\right) =g_{m+n}\left((\delta\Box \tau^A)_{m,n}((x^i\Box a^i)\otimes (x^j\Box a^j))\right)
        \end{align*}
        It follows that the diagram \eqref{dgi615} commutes for each $n\geq 0$. The uniqueness of $g_\bullet=\{g_n\}_{n\geq 0}$  is clear. This proves the result.
        \end{proof}

        \begin{Thm}
          Let   $(Q_\bullet,\delta)\in \mathcal Co\mathscr Mult_k$, with each $Q_i$ cocommutative for $i\geq 0$.     For  $A_\bullet\in \mathscr Mult_k$,  $B_\bullet\in \mathcal Comm_k^{\geq 0}$, there is a natural isomorphism
            \begin{equation}\label{eq6.17d}
                  \mathcal Comm_k^{\geq 0}((Q\Box  A)_\bullet, B_\bullet)\cong  \mathscr Mult_k(A_\bullet,  [Q,B]_\bullet). 
            \end{equation}
             In other words, $(Q\Box-)_\bullet:\mathscr Mult_k\longrightarrow \mathcal Comm^{\geq 0}_k$ is left adjoint to the functor  $[Q,-]_\bullet:\mathcal Comm^{\geq 0}_k\longrightarrow \mathscr Mult_k$.
        \end{Thm}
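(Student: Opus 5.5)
The plan is to assemble the isomorphism \eqref{eq6.17d} from two bijections, exactly as in the proof of Theorem \ref{Thm3.9}. Both sides will be identified with the set of comultiplicative measuring sequences $(Q_\bullet,\mho_\bullet)$ of coalgebras from $A_\bullet$ to $B_\bullet$. Here $B_\bullet\in\mathcal Comm_k^{\geq 0}$ is regarded as an object of $\mathscr Mult_k$ with structure maps $\tau^B$ given by \eqref{pi3r}.

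First, I apply Proposition \ref{prop6.7} with $A'_\bullet=B_\bullet$. It says that a family $\mho_\bullet=\{\mho_n\}_{n\geq 0}$ with $\mho_0(x)(1)=x$ is a comultiplicative measuring sequence from $A_\bullet$ to $B_\bullet$ if and only if the transposed maps $g_n$ of \eqref{eq6.5} form a morphism $A_\bullet\to[Q,B]_\bullet$ in $\mathscr Mult_k$. Transposition $\mho_n\leftrightarrow g_n$ is a bijection of linear data, and in degree $0$ both sides are forced: $g_0=id$ and $\mho_0$ is the canonical map. This gives a bijection between measuring sequences and $\mathscr Mult_k(A_\bullet,[Q,B]_\bullet)$.

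Second, Theorem \ref{thm6.10} shows that every measuring sequence $\mho'_\bullet$ from $A_\bullet$ to $B_\bullet$ factors uniquely as $g_n\circ\mho_n$ through the universal sequence of Lemma \ref{lem6.6}(b), with $g_\bullet\in\mathcal Comm_k^{\geq 0}((Q\Box A)_\bullet,B_\bullet)$. For the converse direction, I must check that any morphism $h_\bullet:(Q\Box A)_\bullet\to B_\bullet$ in $\mathcal Comm_k^{\geq 0}$ yields a measuring sequence $h_n\circ\mho_n$. The argument goes as follows. Since $\mathcal Comm_k^{\geq 0}$ is a full subcategory of $\mathscr Mult_k$, $h_\bullet$ is a morphism of multiplicative sequences. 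Each $h_n$ is an algebra map, so it preserves the measuring property in each degree. Compatibility with the $\delta_{m,n}$ and $\tau$ in \eqref{diag6.4} then follows by applying $h_{m+n}$ to the corresponding identity for $\mho_\bullet$ and using \eqref{diag2} for $h_\bullet$. Uniqueness in Theorem \ref{thm6.10} then makes $g_\bullet\mapsto g_\bullet\circ\mho_\bullet$ a bijection. Composing the two bijections gives \eqref{eq6.17d}. Explicitly, $f_\bullet$ corresponds to $f'_n(a^n)=(x^n\mapsto f_n(x^n\Box a^n))$, and conversely $g_\bullet$ corresponds to $x^t\Box a^t\mapsto\tau^B_{t,n-t}(g_t(a^t)(x^t)\otimes 1)$.

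Finally, I verify naturality in $A_\bullet$ and $B_\bullet$. For a morphism $B_\bullet\to B'_\bullet$, both bijections are given by postcomposition, so they commute with it. For a morphism $u_\bullet:A'_\bullet\to A_\bullet$, the induced map $(Q\Box A')_\bullet\to(Q\Box A)_\bullet$ sends $x\Box a\mapsto x\Box u(a)$, so both sides are given by precomposition with $u$. This step is routine.

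I expect the main obstacle to be the well-definedness side, namely that $(Q\Box-)_\bullet$ is functorial and that the degree-$0$ and unit conventions match ($\mho_0$, $g_0=id$, $x\Box 1=\epsilon(x)$). Theorem \ref{thm6.10} and Lemma \ref{lem6.6}, however, already handle the substantive checks.
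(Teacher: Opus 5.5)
Your proposal is correct and follows the paper's own argument, which likewise obtains \eqref{eq6.17d} by identifying both sides with comultiplicative measuring sequences from $A_\bullet$ to $B_\bullet$ via Proposition \ref{prop6.7} and Theorem \ref{thm6.10}. Your explicit check that $h_\bullet\circ\mho_\bullet$ is again a measuring sequence for every $h_\bullet\in\mathcal Comm_k^{\geq 0}((Q\Box A)_\bullet,B_\bullet)$, and your naturality verification, supply details the paper leaves implicit.
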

        \begin{proof} This follows by comparing the two sides of \eqref{eq6.17d} using 
            Proposition \ref{prop6.7} and Theorem \ref{thm6.10}.

        \end{proof}

    Let $A_\bullet,A_\bullet'\in \mathscr Mult_k$.    In Theorem \ref{Thm3.6}, we have shown  that a coalgebra measuring $(C,\Phi)$ from $A_\bullet$ to $A_\bullet'$ induces a coalgebra measuring $(C,Hoch^\Phi)$ from $Hoch(A_\bullet)$ to $Hoch(A_\bullet')$. We conclude   by showing a more general result for a comultiplicative measuring sequence of coalgebras between multiplicative sequences.

       \smallskip
       Accordingly, let $(Q_\bullet,\delta,\mho_\bullet=\{\mho_n\}_{n\geq 0})$ be a comultiplicative measuring sequence of coalgebras from $A_\bullet$ to $A_\bullet'$. Then, for each $n\geq 0$, the pair $(Q_n,\mho_n: Q_n\longrightarrow Hom_k(A_n,A_n'))$ is a coalgebra measuring from $A_n$ to $A_n'$. Hence, for each $x^n\in Q_n$ with  iterated coproduct $\Delta_n^{p}(x^n)=x^n_{(1)}\otimes x^n_{(2)}\otimes\ldots\otimes x^n_{(p+1)}\in Q_n^{\otimes p+1} $ for $p\geq 0$,  the result of \cite[ Proposition  2.2]{AB2} gives a map of Hochschild complexes
        \begin{equation}\label{eq6.15.}
        \begin{array}{c}
            Hoch^\mho_{n}(x^n):=C_*^{\mho_n}(x^{n}) :Hoch(A_\bullet)_n=C_*(A_n)\longrightarrow C_*(A_n') =Hoch(A'_\bullet)_n\qquad\forall n\geq 0\\
             C_p^{\mho_n}(x^{n})(a_0^n,a^n_1,\ldots,a_p^n)=(\mho_n(x_{(1)}^{n})(a_0^n),\mho_n(x_{(2)}^{n})(a_1^n),\ldots,\mho_n(x_{(p+1)}^{n})(a_p^n)) \qquad\forall p\geq 0
        \end{array}
        \end{equation}

       We are now ready to show that a comultiplicative measuring sequence $(Q_\bullet,\mho_\bullet)$ of coalgebras from $A_\bullet$ to $A_\bullet'$ induces a comultiplicative measuring sequence between graded algebras $Hoch (A_\bullet)$,  $Hoch(A_\bullet')$ in $\mathcal C_k$. For this, we make explicit the idea of a comultiplicative measuring sequence between graded algebras in the monoidal category $\mathcal C_k$.

    \begin{defn}\label{def6.10}
     Let $T=\{T_{n,*}\}_{n\geq 0}, T'=\{T'_{n,*}\}_{n\geq 0}\in \mathscr Gr\mathcal Alg(\mathcal C_k)$ be graded algebras in $\mathcal C_{k}$ with multiplication maps
      \begin{equation*}
         \mu=\left\{\mu_{m,n}:T_{m,*}\otimes T_{n,*}\longrightarrow T_{m+n,*}\right\}_{m,n\geq 0}\qquad\textit{and}\qquad\mu'=\left\{\mu'_{m,n}:T'_{m,*}\otimes T'_{n,*}\longrightarrow T'_{m+n,*}\right\}_{m,n\geq 0}
     \end{equation*}respectively.  A comultiplicative measuring sequence $(Q_\bullet,\Xi)$ of coalgebras from $T$ to $T'$ consists of
     \begin{enumerate}[(a)]
         \item a comultiplicative sequence of cocommutative coalgebras $(Q_\bullet,\delta)$, and
         \item  a collection  $\Xi=\Big\{\Xi_n:Q_n\longrightarrow  \mathcal C_k(T_{n,*},T'_{n,*})\Big\}_{n\geq 0}$ of $k$-linear maps
    satisfying the following condition
     \begin{equation}\label{eq6.16}
        \Xi_{m+n}(x^{m+n})(\mu_{m,n}(t^m_p\otimes t^n_q))=\mu'_{m,n}\left(\sum\Big(\Xi_m(x^{m(1)})(t^m_p)\Big)\otimes\Big(\Xi_n(x^{n(2)})(t^n_q) \Big)\right)
     \end{equation}
     for all $m,n\geq 0$, $t_p^m\in T_{m,p}$, $t_q^n\in T_{n,q}$ and $x^{m+n}\in Q_{m+n}$ with $\delta_{m,n}(x^{m+n})=x^{m(1)}\otimes x^{n(2)}\in Q_m\otimes Q_n$.
      \end{enumerate}
\end{defn}

\begin{Thm}\label{Thm6.11}
     Let  $A_\bullet,A'_\bullet\in \mathscr{M}ult_k$. Let $(Q_\bullet,\delta, \mho_\bullet=\{\mho_n\}_{n\geq 0})$ be a comultiplicative measuring sequence of coalgebras from $A_\bullet$ to $A_\bullet'$. Then, the pair $(Q_\bullet,Hoch^{\mho}=\{Hoch_{n}^{\mho}:Q_n\longrightarrow  \mathcal C_k(C_*(A_n),C_*(A'_n))\}_{n\geq 0})$ forms a comultiplicative measuring sequence of coalgebras from $Hoch(A_\bullet)$ to $Hoch(A'_\bullet)$, where 
     \begin{equation}
     \begin{array}{c}
          Hoch_{n}^{\mho}:Q_n\longrightarrow  \mathcal C_k(C_*(A_n),C_*(A'_n))  \\
          x^n\longmapsto Hoch_{n}^{\mho}(x^n):=C^{\mho_n}_*(x^n):Hoch(A_\bullet)_n=C_*(A_n)\longrightarrow C_*(A_n')=Hoch(A'_\bullet)_n
     \end{array}
     \end{equation}
\end{Thm}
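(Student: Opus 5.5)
The plan is to copy the proof of Theorem \ref{Thm3.6}, replacing the single cocommutative coalgebra $C$ by the comultiplicative sequence $(Q_\bullet,\delta)$. Two points in that proof used the coproduct of $C$, and both must now be handled with care. One is the iterated coproduct inside each $C^{\Phi_i}_*(x)$; this becomes the coproduct $\Delta_n$ of the single coalgebra $Q_n$. The other is the splitting $x\mapsto x_{(1)}\otimes x_{(2)}$ that matches the two tensor factors; this becomes $\delta_{m,n}$.

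\textbf{Step 1 (setup).} For each $n$, $(Q_n,\mho_n)$ is a measuring with $Q_n$ cocommutative. So \eqref{eq6.15.} and \cite[Proposition 2.2]{AB} give chain maps $C^{\mho_n}_*(x^n)$, and $Hoch^\mho_n$ is a well-defined linear map into $\mathcal C_k(C_*(A_n),C_*(A'_n))$. Cocommutativity of $Q_n$ makes $C^{\mho_n}_r(x^n)$ commute with the $S_r$-action \eqref{act2}. Each $C_r(\tau_{m,n})$ and $C_r(\tau'_{m,n})$ is equivariant, exactly as in Theorem \ref{Thm3.6}. The case $m=0$ or $n=0$ follows from $\delta_{n,0}=id$, $\tau_{n,0}=id$ and $\mho_0(x)(s)=xs$.

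\textbf{Step 2 (left-hand side).} Fix $m,n\geq1$, $x^{m+n}\in Q_{m+n}$, and chains $(a^m_0,\dots,a^m_p)$ and $(a^n_0,\dots,a^n_q)$. Expand
\[
C^{\mho_{m+n}}_{p+q}(x^{m+n})\bigl(\tau^{sh}_{m,n}(\cdots)\bigr).
\]
Using equivariance, this equals
\[
\sum_{\sigma}sgn(\sigma)\,\sigma\cdot\bigl(\mho_{m+n}(x^{m+n}_{(1)})(\tau_{m,n}(a^m_0\otimes a^n_0)),\dots\bigr),
\]
where each $\tau_{m,n}$ is evaluated at $a^m_i\otimes1$ or $1\otimes a^n_j$.

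\textbf{Step 3 (rewriting each slot).} Apply \eqref{diag4prm2} to each slot to get terms of the form
\[
\tau'_{m,n}\bigl(\mho_m(x^{m+n(1)}_{(k)}{}^{m(1)})(a)\otimes\mho_n(\cdots)(b)\bigr).
\]
Now use that $\delta_{m,n}$ is a coalgebra morphism:
\[
(\delta_{m,n}\otimes\cdots\otimes\delta_{m,n})\circ\Delta^{p+q}_{m+n}=\Delta^{p+q}_{Q_m\otimes Q_n}\circ\delta_{m,n}.
\]
After rearranging, this means that taking the $(p+q+1)$-fold coproduct of $x^{m+n}$ and then applying $\delta_{m,n}$ slotwise is the same as applying $\delta_{m,n}$ once to get $x^{m(1)}\otimes x^{n(2)}$ and then taking the iterated coproducts of $x^{m(1)}$ in $Q_m$ and of $x^{n(2)}$ in $Q_n$.

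\textbf{Step 4 (unit slots).} In the slots carrying $a^m_i\otimes1$, the $Q_n$-component acts on $1_{A_n}$ and yields a counit $\epsilon_n$. Symmetrically, the slots carrying $1\otimes a^n_j$ yield $\epsilon_m$ on the $Q_m$-component. These counits collapse the $(p+q+1)$-fold coproducts down to a $(p+1)$-fold coproduct of $x^{m(1)}$ and a $(q+1)$-fold coproduct of $x^{n(2)}$. Cocommutativity of $Q_m$ and $Q_n$ is used here to reorder the tensor factors freely.

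\textbf{Step 5 (right-hand side).} The result of Step 4 is exactly
\[
\tau'^{sh}_{m,n}\bigl(C^{\mho_m}_p(x^{m(1)})(\cdots)\otimes C^{\mho_n}_q(x^{n(2)})(\cdots)\bigr),
\]
obtained by expanding the shuffle as in the second half of the proof of Theorem \ref{Thm3.6}. This verifies \eqref{eq6.16}.

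The main obstacle is the bookkeeping in Steps 3 and 4: making sure that compatibility of $\delta_{m,n}$ with iterated coproducts, combined with the counit collapse, yields exactly the right Sweedler indices. I would first check the case $p=q=1$ by hand, and then argue the general case by induction on $p+q$ using coassociativity.
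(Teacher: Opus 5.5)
Your proposal is correct and follows the paper's proof. Both arguments expand both sides over $(p,q)$-shuffles using $S_r$-equivariance, apply \eqref{diag4prm2} in each slot, and use that $\delta_{m,n}$ is a coalgebra morphism to match the Sweedler indices; you also spell out the counit collapse in the unit slots, which the paper leaves implicit. The closing induction on $p+q$ is unnecessary, since your Steps 3--4 already handle the general case directly.
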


\begin{proof}
 The proof is similar to that of Theorem \ref{Thm3.6}. As in the proof of Theorem \ref{Thm3.6},  for $r\geq 0$, the maps $C_r(\tau_{m,n}): C_r(A_m  \otimes A_n)\longrightarrow C_r(A_{m+n})$, $C_r(\tau'_{m,n}):C_r(A'_m  \otimes A'_n)\longrightarrow C_r(A'_{m+n})$   preserve the action (described in \eqref{act2}) of $S_r$ on the terms in the Hochschild complexes.  
    Further, since each $Q_n$ is cocommutative, for any $x^n\in Q_n$, the maps 
\begin{equation}
Hoch^\mho_{n}(x^n)=C^{\mho_n}_*(x^n):Hoch(A_\bullet)_n=C_*(A_n)\longrightarrow C_*(A'_n)=Hoch(A'_\bullet)_n
\end{equation} described in \eqref{eq6.15.} preserve the action of permutation groups on terms in the Hochschild complexes.
To complete the proof, we must show that $(Q_\bullet,Hoch^\mho)$ satisfies the condition in \eqref{eq6.16}. If $m=0$ or $n=0$, then the result is clear. We take $m,n\geq 1$ and note that for $(a^m_0,\ldots, a^m_p)\in C_p(A_m),(a^n_0,\ldots, a^n_q)\in C_q(A_n)$ and $x^{m+n}\in Q_{m+n}$ with  iterated coproduct $\Delta^{p+q}_{m+n}(x^{m+n})=x^{m+n}_{(1)}\otimes\ldots \otimes x^{m+n}_{(p+q+1)}\in Q_{m+n}^{\otimes p+q+1}$ and $\delta_{m,n}(x^{m+n})=x^{m(1)}\otimes x^{n(2)}$, we have
   \begin{align}
       &Hoch^{\mho}_{m+n}(x^{m+n})\left(\tau^{sh}_{m,n}\Big((a^m_0,\ldots, a^m_p)\otimes (a^n_0,\ldots, a^n_q)\Big)\right)\notag\\
       &=C_{p+q}^{\mho_{m+n}}(x^{m+n})\left(\sum_{\sigma\in S_{p,q}}sgn(\sigma)\left(\sigma\cdot\left(\tau_{m,n}(a^m_0\otimes a^n_0),\tau_{m,n}(a^m_1\otimes1),\ldots,\tau_{m,n}(a^m_p\otimes 1),\tau_{m,n}(1\otimes a^n_1),\ldots ,\tau_{m,n}(1\otimes a^n_q)\right)\right)\right)\notag\\
       &=\sum_{\sigma\in S_{p,q}}sgn(\sigma)\Big(\sigma\cdot\Big(\mho_{m+n}(x^{m+n}_{(1)})(\tau_{m,n}(a^m_0\otimes a_0^n)),\mho_{m+n}(x^{m+n}_{(2)})(\tau_{m,n}(a^m_1\otimes1)),\ldots,\mho_{m+n}(x^{m+n}_{(p+1)})(\tau_{m,n}(a^m_p\otimes 1)),\notag\\
       &\qquad\mho_{m+n}(x^{m+n}_{(p+2)})(\tau_{m,n}(1\otimes a^n_1)),\ldots ,\mho_{m+n}(x^{m+n}_{(p+q+1)})(\tau_{m,n}(1\otimes a^n_q))\Big)\Big)\notag
       \end{align}
       On the other hand, we see
       \begin{align*}
           &\tau'^{sh}_{m,n}\left(Hoch^{\mho}_{m}(x^{m(1)})(a^m_0,\ldots, a^m_p)\otimes Hoch^{\mho}_n(x^{n(2)})(a^n_0,\ldots, a_q^n)\right)\\
           &=\tau'^{sh}_{m,n}\left(C_p^{\mho_m}(x^{m(1)})(a^m_0,\ldots, a^m_p)\otimes C_q^{\mho_n}(x^{n(2)})(a^n_0,\ldots, a^n_q)\right)\\
           &=\tau'^{sh}_{m,n}\Big(\Big(\mho_{m}(x^{m(1)}_{(1)})(a^m_0),\mho_{m}(x^{m(1)}_{(2)})(a^m_1),\ldots, \mho_{m}(x^{m(1)}_{(p+1)})(a^m_p)\Big)\otimes \Big(\mho_{n}(x^{n(2)}_{(1)})(a^n_0), \mho_{n}(x^{n(2)}_{(2)})(a^n_1),\ldots,\mho_{n}(x^{n(2)}_{(q+1)})(a^n_q)\Big)\Big)\\
           &=C_{p+q}(\tau'_{m,n})\sum_{\sigma\in S_{p,q}}sgn(\sigma)\Big(\sigma\cdot \Big(\mho_{m}(x^{m(1)}_{(1)})(a^m_0)\otimes \mho_{n}(x^{n(2)}_{(1)})(a^n_0),\mho_{m}(x^{m(1)}_{(2)})(a^m_1)\otimes 1,\ldots, \mho_{m}(x^{m(1)}_{(p+1)})(a^m_p)\otimes 1,\\
           &\qquad 1\otimes \mho_{n}(x^{n(2)}_{(2)})(a^n_1),\ldots,1\otimes\mho_{n}(x^{n(2)}_{(q+1)})(a^n_q)\Big)\Big)\notag\\
           &=\sum_{\sigma\in S_{p,q}}sgn(\sigma)\Big(\sigma\cdot\Big(\mho_{m+n}(x^{m+n}_{(1)})(\tau_{m,n}(a^m_0\otimes a_0^n)),\mho_{m+n}(x^{m+n}_{(2)})(\tau_{m,n}(a^m_1\otimes1)),\ldots,\mho_{m+n}(x^{m+n}_{(p+1)})(\tau_{m,n}(a^m_p\otimes 1)),\\
           &\qquad\mho_{m+n}(x^{m+n}_{(p+2)})(\tau_{m,n}(1\otimes a^n_1)),\ldots ,\mho_{m+n}(x^{m+n}_{(p+q+1)})(\tau_{m,n}(1\otimes a_q^n))\Big)\Big)\notag
       \end{align*}
       where the last equality holds by applying the condition \eqref{diag4prm2} on $\mho_\bullet$, the cocommutativity of $Q_{m+n}$ and the fact that 
       $\delta_{m,n}:Q_{m
+n}\longrightarrow Q_m\otimes Q_n$ is a morphism of coalgebras. This proves the result. 
\end{proof}

\end{document}